\let\oldsection=\section
\renewcommand{\section}[1]{\vspace{.18in}\par\noindent
\addtocounter{section}
{1}\setcounter{subsection}{0}{\bf\thesection\hspace{9pt}#1}}
\renewcommand{\subsection}{\par\vspace{.18in}\noindent\addtocounter
{subsection}{1}\setcounter{equation}{0}{\bf\thesubsection\hspace{9pt}}}
\theoremstyle{plain}
\newtheorem{thm}{Theorem}
\newtheorem{cor}{Corollary}
\newtheorem{lem}{Lemma}
\newtheorem{prop}{Proposition}
\theoremstyle{definition}
\newtheorem{rem}{Remark}
\newtheorem{obs}{Observation}
\numberwithin{equation}{subsection}
\newcommand{\gfp}{G(\mathbb{F}_p)}
\newcommand{\gfpr}{G(\mathbb{F}_{q})}
\newcommand{\Fr}{{\text{\rm Fr}}}
\newcommand{\Fp}{\mathbb{F}_{p}}
\newcommand{\cgr}{\mathcal{G}_r(k)}
\newcommand{\ul}{\mathfrak{u}}
\newcommand{\Lie}{\operatorname{Lie}}
\newcommand{\ch}{\text{\rm ch}}
\newcommand{\ind}{\operatorname{ind}}
\newcommand{\Ext}{\operatorname{Ext}}
\newcommand{\into}{\hookrightarrow}
\newcommand{\opH}{\operatorname{H}}
\newcommand{\Hom}{\operatorname{Hom}}
\renewcommand{\mod}{\operatorname{mod}}
\renewcommand{\ch}{\operatorname{char}}
\newcommand{\ga}{\gamma}
\newcommand{\la}{\lambda}
\newcommand{\al}{\alpha}
\newcommand{\ta}{\tilde{\alpha}}
\newcommand{\be}{\beta}
\newcommand{\si}{\sigma}
\begin{document}

\title[On the vanishing ranges for the cohomology of finite groups of Lie type]
{\bf On the vanishing ranges for the cohomology of finite groups of Lie type}

\author{\sc Christopher P. Bendel}
\address
{Department of Mathematics, Statistics and Computer Science\\
University of
Wisconsin-Stout \\
Menomonie\\ WI~54751, USA}
\thanks{Research of the first author was supported in part by NSF
grant DMS-0400558}
\email{bendelc@uwstout.edu}

\author{\sc Daniel K. Nakano}
\address
{Department of Mathematics\\ University of Georgia \\
Athens\\ GA~30602, USA}
\thanks{Research of the second author was supported in part by NSF
grant DMS-0654169}
\email{nakano@math.uga.edu}

\author{\sc Cornelius Pillen}
\address{Department of Mathematics and Statistics \\ University of South
Alabama\\
Mobile\\ AL~36688, USA}
\email{pillen@jaguar1.usouthal.edu}

\date{June 2010}
\thanks{2000 {\em Mathematics Subject Classification.} Primary 20J06;
Secondary 20G10}
%\subjclass{Primary 20J06; Secondary 20G10}

\begin{abstract} Let $G({\mathbb F}_{q})$ be a finite Chevalley group 
defined over the field of $q=p^{r}$ elements, and $k$ be an algebraically 
closed field of characteristic $p>0$. A fundamental open and elusive problem 
has been the computation of the cohomology ring $\opH^{\bullet}(G({\mathbb F}_{q}),k)$. 
In this paper we determine initial vanishing ranges which improves upon known results. 
For root systems of type $A_n$ and $C_n$, the first non-trivial cohomology classes are 
determined when $p$ is larger than the Coxeter number (larger than twice the Coxeter number for type $A_n$ with $n>1$ and $r >1$). In the process we make  
use of techniques involving line bundle cohomology for the flag variety $G/B$ and its relation 
to combinatorial data from Kostant Partition Functions. 
\end{abstract}

\maketitle

%%%%%%%%%%
%%Introduction
%%%%%%%%%%

\section{Introduction}

\subsection Let $G$ be a simple algebraic group over an algebraically closed 
field $k$ of prime characteristic $p>0$ which is split over the prime field $\Fp$. Let  
$\Fr : G \to G$ denote the Frobenius map and set $q=p^{r}$. The fixed points of the $r$th iterate of 
the Frobenius map, denoted $\gfpr$, is a finite Chevalley group. An outstanding 
open problem of major interest for algebraists and topologists has been 
to determine the cohomology ring $\opH^{\bullet}(\gfpr,k)$\footnote{In the cross-characteristic 
situation (i.e., if $\text{char }k=l$ and $\gcd(l,p)=1$) much more is known about the cohomology of 
$\opH^{\bullet}(\gfpr,k)$ (cf. \cite[Chapter VII]{AM}). In fact in many cases in the non-describing 
characteristic the cohomology (including the ring structure) is completely determined.}. In 2005, during 
a talk at an Oberwolfach conference, Friedlander mentioned that so little is known about this computation that 
it is not even known in which degree the first non-trivial cohomology class occurs.

Our paper aims to address this fundamental question by investigating two problems: 
\vskip .15cm 
\noindent 
(1.1.1) Determining Vanishing Ranges: Finding $D>0$ such that 
the cohomology groups $\opH^i(\gfpr,k)=0$ for $0< i < D$.
\vskip .15cm 
\noindent 
(1.1.2) Locating the First Non-Trivial Cohomology Class: In many instances in conjunction 
with the aforementioned problem, we will find a $D$ such that $\opH^i(\gfpr,k)=0$ for $0<i < D$ and 
$\opH^{D}(\gfpr,k)\neq 0$. A $D$ satisfying this property will be called a {\em sharp bound}. 
\vskip .15cm 
There have been earlier results in the 1970s and 80s addressing (1.1.1). Quillen \cite{Q} showed 
that $\opH^i(GL_n(\mathbb{F}_{q}),k) = 0$ for all $0 < i < r(p - 1)$ and all $n$. 
In that work, he noted that the arguments showed for any $G$ as above, there exists a 
constant $C$ depending on the root system such that $\opH^i(\gfpr,k) = 0$ for $0 < i < r\cdot C$. 
However, no explicit value of $C$ is given except for $G = SL_2$ (and $p$ odd) in which case 
one can take $C = (p-1)/2$.  Furthermore, it was not determined whether these vanishing ranges were 
sharp.  Indeed, in the case of $SL_2$, one can see from work of Carlson \cite{C} that these 
bounds are not sharp in general.  Quillen's original work arose in the context of certain 
$K$-theory computations. Friedlander \cite{F} later used $K$-theoretic techniques to find vanishing 
ranges for more general reductive groups.  Later work of Hiller \cite{H} 
extended Friedlander's result and found vanishing ranges for groups of all types.

Friedlander and Parshall \cite[(A.1) Lemma]{FP} found a sharp bound for the Borel subgroup $B(\mathbb{F}_{q})$
of $GL_n (\mathbb{F}_{q})$. Independent of this work, Barbu \cite{B} constructed a non-zero cohomology class 
in $\opH^{2p-2}(GL_n({\mathbb F}_{p}),k)$ for $p\geq n$. In this paper he conjectured that the sharp 
bound is $D=2p-3$ for $GL_n({\mathbb F}_{p})$ when $n\geq 2$ and $p\geq 3$ 
(cf. \cite[Section 1, Conjecture 4.11]{B}). Since that time, few if any results have been 
obtained in this direction.

%%%%%%%%%%
%%%%Section 1.2
%%%%%%%%%%

\subsection{} The strategy in addressing (1.1.1) and (1.1.2) will entail using 
new and powerful techniques developed by the authors which relate $\opH^i(\gfpr,k)$ to 
extensions over $G$ via a truncated version of the induction functor 
(cf. \cite{BNP1, BNP2, BNP3, BNP5, BNP6}). An outline of the overall strategy is 
presented in the diagram below. For the purposes in this paper 
we will use a non-truncated induction functor ${\mathcal G}_{r}(-)$. We demonstrate that 
when applied to the trivial module $k$, ${\mathcal G}_{r}(k)$ has a filtration 
with factors of the form $H^0(\la) \otimes H^{0}(\la^*)^{(r)}$ (cf. Proposition 2.4). 
The $G$-cohomology of these factors can be analyzed by using the Lyndon-Hochschild-Serre 
(LHS) spectral sequence involving the first Frobenius kernel $G_{r}$ (cf. Section 3.1). 
In particular for $r=1$, we can apply the results of Kumar-Lauritzen-Thomsen \cite{KLT} 
to bound the dimension of the cohomology group 
$\opH^{\bullet}(G({\mathbb F}_{p}),k)$ from above
(cf. Theorem 3.3). The upper bound on the dimension  involves the combinatorics of the well-studied 
Kostant Partition Function. This reduces the question of the vanishing of the finite group 
cohomology to a question involving the combinatorics of the underlying root system $\Phi$.

\begin{picture}(470,110)(0,0)
\put(0,50){$\text{H}^i(G({\mathbb F}_{q}),k)$}
\put(70,50){$ \Longrightarrow$}
\put(55,82){$ \text{ Induction}$}
\put(59,70){$ \text{ Functor}$}
\put(100,50){$\opH^i(G,\cgr)$}
\put(60,10){$\text{H}^i(G,H^0(\la) \otimes H(\la^*)^{(r)})$}
\put(195,10){$ \Longrightarrow$}
\put(130,30){$ \Downarrow$}
\put(165,-10){$ \text{ LHS Spectral}$}
\put(150,30){$ \text{ Filtrations}$}
\put(172,-22){$ \text{ Sequences}$}
\put(220,10){$\text{H}^i(G_1,H^0(\la))$}
\put(300,10){$ \Longrightarrow$}
\put(268,-10){$ \text{Kostant Partition}$}
\put(280,-22){$ \text{ Functions}$}
\put(328,10){Root Combinatorics.}
\end{picture}

\vskip 2cm 

More specifically, for a group $G$ of classical type and $\opH^i(\gfpr,k)$, under the assumption 
that $p > h$ (the Coxeter number), in Theorem 4.4, we identify a vanishing range which improves 
upon (in almost all cases) the ranges of \cite{H}.  Furthermore, in type $C_n$ and $A_n$, we identify 
a {\it sharp vanishing bound} which addresses (1.1.2) (cf. Theorems 5.4, 6.13, 6.14).  These  
bounds are established for primes larger than the Coxeter number, except for  
type $A_n$ with $r > 1$ where sharp vanishing bounds are found for $p$ greater 
than twice the Coxeter number. Finally, as a demonstration of the effectiveness of 
our methods we verify Barbu's Conjecture for $G=GL_n({\mathbb F}_{q})$ when $n\geq 2$ and $p\geq n+2$ 
(cf. Theorem 6.15). 

Our results provide a conceptual description of how the geometry of the nilpotent cone 
plays a role in the description of the cohomology  $\opH^i(G({\mathbb F}_{p}),k)$. In particular 
we prove for $p>h$ (cf. Theorem 3.3): 

$$\dim \text{H}^i(G({\mathbb F}_{p}),k) \leq \sum_{\{w \in W | \ell(w) \equiv i \mod 2\}} \sum_{\mu \in X(T)_+} \sum_{u \in W} 
(-1)^{\ell(u)} P_{\frac{i-\ell(w)}{2}}( u\cdot(p\mu+w\cdot 0) - \mu).
$$ 
Here, $P_{n}(-)$ is the Kostant Partition Function. The root combinatorics involving the Kostant 
Partition Function naturally arises in the context of composition factor multiplicities in 
the ring of regular functions on the nilpotent cone ${\mathcal N}$ of ${\mathfrak g}=\text{Lie }G$ 
(cf. \cite{Jan5} \cite{Br}). This result reinforces  
work of Carlson, Lin and Nakano \cite{CLN} where they prove that the spectrum of this cohomology 
ring is given by the coordinate algebra on ${\mathcal N}^{{\mathbb F}_{p}}/G({\mathbb F}_{p})$ 
where ${\mathcal N}^{{\mathbb F}_{p}}$ is the variety inside of ${\mathcal N}$ consisting of 
${\mathbb F}_{p}$-expressible elements.

The sections of the paper are outlined as follows. In Section 2, we review our previous work and develop the 
necessary cohomological tools related to induction functors which will be used to determine vanishing ranges.  
In Section 3, we present some further cohomological properties relating extensions over 
$G$ with those over the Frobenius kernel $G_r$.  In Section 4, our general vanishing 
bounds are presented.  Finally, Sections 5 and 6 deal with the special cases of root systems 
of types $C_n$ and $A_n$ respectively. The reader might be surprised to see type $C$ treated prior to type $A$. 
It turns out that the root systems of type $C$ are by far the easiest to be dealt with. The 
authors in future work plan to address (1.1.1) and (1.1.2) in the case of the remaining classical groups and the exceptional groups.

%%%%%%%%%%
%%Section 2 
%%%%%%%%%

\section{Relating $\gfpr$ and $G$}

%%%%%%%%%%
%%%%Section 2.1
%%%%%%%%%%

\subsection{\bf Notation.} Throughout this paper, we will
follow the basic conventions provided in \cite{Jan}.
Let $G$ be a simple simply connected algebraic group scheme which is defined
and split over the finite field ${\mathbb F}_p$ with $p$ elements,
and let $k$ be a field of characteristic $p$.
For $r\geq 1$, let $G_r:=\text{ker }F^{r}$ be the $r$th Frobenius kernel of $G$
and $\gfpr$ be the associated finite Chevalley group.
Let $T$ be a maximal split torus and $\Phi$ be the root system
associated to $(G,T)$. The positive (resp. negative)
roots are $\Phi^{+}$ (resp. $\Phi^{-}$), and $\Delta$ is
the set of simple roots. Let $B$ be a Borel subgroup
containing $T$ corresponding to the negative roots and $U$ be the
unipotent radical of $B$. For a given root system of rank $n$,
the simple roots will be denoted by $\al_1, \al_2, \dots, \al_n$.
We will adhere to the Bourbaki ordering of simple roots. In particular, for type $B_n$,
$\al_n$ denotes the unique short simple root and for type $C_n$,
$\al_n$ denotes the unique long simple root. 
The longest (positive) root will be denoted $\ta$, and for root systems
with multiple root lengths, the longest short root will be denoted $\al_0$.
Let $W$ denote the Weyl group associated to $\Phi$, and, for $w \in W$, let $\ell(w)$
denote the length of the word.

Let ${\mathbb E}$ be the Euclidean space associated with $\Phi$, and
the inner product on ${\mathbb E} $ will be denoted by $\langle\ , \
\rangle$. Let $\alpha^{\vee}=2\alpha/\langle\alpha,\alpha\rangle$ 
be the coroot corresponding to $\alpha\in \Phi$.
In this case, the fundamental weights (basis dual to
$\al_1^{\vee}, \al_2^{\vee}, \dots, \al_n^{\vee}$)
will be denoted by $\omega_1$, $\omega_2$, \dots,
$\omega_n$. Let $X(T)$ be the integral weight lattice spanned by
these fundamental weights. The set of dominant integral weights is
denoted by $X(T)_{+}$. For a weight $\la \in X(T)$, set 
$\la^* : = -w_0\la$ where $w_0$ is the longest word in the 
Weyl group $W$.   By $w \cdot \la := w(\la + \rho) -\rho$  we denote the ``dot" action  of $W$ on $X(T)$, with $\rho$ being the half-sum of the positive roots. For $\al \in \Delta$, $s_{\al} \in W$ denotes the 
reflection in the hyperplane determined by $\al$.

For a $G$-module $M$, let $M^{(r)}$ be the module obtained by composing 
the underlying representation for $M$ with $F^{r}$. Moreover, let 
$M^*$ denote the dual module. For $\la \in X(T)_+$, let $H^0(\la) := \ind_B^G\la$ 
be the induced module and $V(\la) := H^0(\la^*)^*$ be the Weyl module of highest 
weight $\lambda$.

%%%%%%%%%%
%%%%%Section 2.2
%%%%%%%%%%

\subsection{} We record two observations on 
roots that will be used at various points in the paper:

\begin{obs}[A]\label{long}
If $\be \in \Phi^{+}$ with $\be \neq \ta$, then 
$\langle\be,\ta^{\vee}\rangle \in \{0,1\}$.
\end{obs}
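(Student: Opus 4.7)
The plan is to run the standard root-string calculus with respect to $\tilde{\alpha}$. For a fixed positive root $\beta \neq \tilde{\alpha}$, the set $\{i \in \mathbb{Z} : \beta + i\tilde{\alpha} \in \Phi\}$ forms a consecutive interval $[-q,p]$ with $p,q \geq 0$, and the standard identity from root-system theory gives
\[
\langle \beta, \tilde{\alpha}^{\vee}\rangle \;=\; q - p.
\]
Because $\tilde{\alpha}$ is the unique highest root of $\Phi$ and $\beta$ is positive, the element $\beta + \tilde{\alpha}$ strictly exceeds $\tilde{\alpha}$ in the dominance order on the root lattice and therefore cannot be a root. This forces $p = 0$, and consequently $\langle \beta, \tilde{\alpha}^{\vee}\rangle = q \geq 0$.

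It then remains to show $q \leq 1$. I would argue by contradiction: suppose $q \geq 2$, so that $\beta - 2\tilde{\alpha}$ belongs to $\Phi$. Since $\beta \leq \tilde{\alpha}$ in the dominance order, $\beta - 2\tilde{\alpha} \leq -\tilde{\alpha}$ is negative, and hence $2\tilde{\alpha} - \beta$ is a positive root. Applying the maximality of $\tilde{\alpha}$ a second time yields $2\tilde{\alpha} - \beta \leq \tilde{\alpha}$, i.e.\ $\beta \geq \tilde{\alpha}$, and therefore $\beta = \tilde{\alpha}$, contradicting the hypothesis. So $q \in \{0,1\}$, proving the claim.

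No step is genuinely hard here; the only care needed is in bookkeeping with the dominance partial order when converting the statement \emph{$\beta - 2\tilde{\alpha} \in \Phi$} into \emph{$2\tilde{\alpha} - \beta \in \Phi^{+}$}. The argument is uniform in type: even in $G_2$, where $\langle -,\tilde{\alpha}^{\vee}\rangle$ could a priori take values up to $\pm 3$, the root-string identity together with the two applications of the highest-root property is enough to rule out $q \geq 2$ without a case analysis.
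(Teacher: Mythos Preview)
Your argument is correct. The paper records this as an observation without proof, treating it as a standard fact about root systems; your root-string argument is a clean and type-uniform verification that fills in what the paper leaves implicit.
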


\begin{obs}[B]\label{unique}
If $w \in W$ admits a reduced expression
$w = s_{\be_1}s_{\be_2}\dots s_{\be_m}$ with $\be_i \in \Delta$ and 
$m = \ell(w)$, then 
$$
-w\cdot 0 = \be_1 + s_{\be_1}(\be_2) + 
s_{\be_1}s_{\be_2}(\be_3) + \cdots + s_{\be_1}s_{\be_2}\dots s_{\be_{m-1}}(\be_m).
$$
Moreover, this is the unique way in which $-w\cdot 0$ can be expressed as a
sum of distinct positive roots.
\end{obs}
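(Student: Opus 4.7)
The first identity is a standard consequence of the formula $\rho - w\rho = \sum_{\al \in \Phi(w)} \al$, where $\Phi(w) := \{\al \in \Phi^+ : w^{-1}\al \in \Phi^-\}$ is the inversion set of $w$; a short induction on $m$ shows that the reduced expression $w = s_{\be_1}\cdots s_{\be_m}$ enumerates $\Phi(w)$ exactly as $\{\be_1,\,s_{\be_1}(\be_2),\,\ldots,\,s_{\be_1}\cdots s_{\be_{m-1}}(\be_m)\}$. Since $-w\cdot 0 = \rho - w\rho$, the displayed equality follows at once. The uniqueness assertion is the substantive content, and the plan is to establish it by induction on $m = \ell(w)$; the base case $m = 0$ is trivial since the only sum of distinct positive roots equal to $0$ is the empty sum.

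For the inductive step, set $s := s_{\be_1}$ and $w'' := s_{\be_2}\cdots s_{\be_m}$, so $w = sw''$, $\ell(w'') = m-1$, and reducedness of the given expression forces $\be_1 \notin \Phi(w'')$. A direct calculation using $s\rho = \rho - \be_1$ and $sw = w''$ yields the key identity
$$
s(-w\cdot 0) = -w''\cdot 0 - \be_1.
$$
Given any $A \subset \Phi^+$ of distinct positive roots with $\sum_{\al \in A}\al = -w\cdot 0$, the plan is to split on whether $\be_1 \in A$, exploiting that $s$ permutes $\Phi^+ \setminus \{\be_1\}$ bijectively. If $\be_1 \in A$, then applying $s$ to $A' := A \setminus \{\be_1\}$ and using the key identity gives $\sum_{\al \in s(A')}\al = -w''\cdot 0$, with $s(A')$ a set of distinct positive roots; the induction hypothesis forces $s(A') = \Phi(w'')$, so $A = \{\be_1\} \cup s(\Phi(w'')) = \Phi(w)$. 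If instead $\be_1 \notin A$, then $s(A)$ and $\{\be_1\}$ are disjoint and $s(A) \cup \{\be_1\}$ is a set of $|A|+1$ distinct positive roots summing to $-w''\cdot 0$; induction identifies this union with $\Phi(w'')$, which contradicts $\be_1 \notin \Phi(w'')$, so this case cannot occur and the induction closes.

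The hard part is recognizing the correct \emph{peel-off} operation. Removing the first simple reflection $s_{\be_1}$ from the reduced word (rather than the last) is essential, because $s_{\be_1}$ preserves $\Phi^+ \setminus \{\be_1\}$ and thus transports any valid decomposition of $-w\cdot 0$ into distinct positive roots to one of $-w''\cdot 0$ of the same type, where the induction hypothesis applies directly. The rigidity that yields uniqueness comes precisely from the contradiction forced in the second case: a hypothetical decomposition of $-w\cdot 0$ omitting $\be_1$ would manufacture a decomposition of $-w''\cdot 0$ containing $\be_1$, which the inductive hypothesis then refutes. Once this perspective is adopted, the two cases are essentially forced and the argument closes cleanly.
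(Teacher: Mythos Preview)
Your proof is correct. The paper records this result as an observation without proof, treating both the displayed identity and the uniqueness claim as standard facts about root systems and Weyl groups. Your inductive argument for uniqueness via peeling off the leftmost simple reflection $s_{\be_1}$ is clean and complete: the two facts you rely on --- that $s_{\be_1}$ permutes $\Phi^+\setminus\{\be_1\}$, and that $\be_1\notin\Phi(w'')$ (equivalent to $\ell(s_{\be_1}w'')>\ell(w'')$, which holds since the expression is reduced) --- are exactly what the argument needs, and the contradiction in Case~2 closes the induction.
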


%%%%%%%%%%
%%%%%Section 2.3
%%%%%%%%%%

\subsection{\bf The Induction Functor.} 
Set $\cgr := \ind_{\gfpr}^G(k)$.  While this $G$-module is infinite dimensional,
it provides a potential way to relate extensions over $\gfpr$ with extensions
over $G$.  Indeed, by Generalized Frobenius Reciprocity and the fact that 
$G/G({\mathbb F}_q)$ is affine, we have the following key observation.

\begin{prop} Let $M, N$ be rational $G$-modules. Then, for all $i \geq 0$,
$$
\Ext^i_{\gfpr}(M,N) \cong \Ext_G^i(M,N\otimes\cgr).
$$

\end{prop}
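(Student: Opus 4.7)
The plan is to combine generalized Frobenius reciprocity with the exactness of the induction functor $\ind_{\gfpr}^G(-)$, using the tensor identity to turn ordinary induction into the twist by $\cgr$. First I would note that, for any rational $G$-module $N$ (viewed as a $\gfpr$-module by restriction), the tensor identity gives an isomorphism of rational $G$-modules
$$
\ind_{\gfpr}^G(N) \;\cong\; N \otimes \ind_{\gfpr}^G(k) \;=\; N \otimes \cgr.
$$
So it is enough to establish the isomorphism $\Ext^i_{\gfpr}(M,N) \cong \Ext^i_G(M,\ind_{\gfpr}^G(N))$.

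Next I would invoke the fact that $\gfpr$ is a finite subgroup scheme of $G$, which forces the quotient $G/\gfpr$ to be affine (this is the hypothesis already emphasized in the statement). By a standard result of Cline--Parshall--Scott--van der Kallen (see, e.g., Jantzen I.5.13), affineness of $G/\gfpr$ is equivalent to exactness of the induction functor $\ind_{\gfpr}^G(-)\colon \gfpr\text{-}\mathrm{Mod} \to G\text{-}\mathrm{Mod}$. Moreover, being right adjoint to the exact restriction functor, $\ind_{\gfpr}^G$ preserves injectives.

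With these ingredients in hand, I would write $\Hom_{\gfpr}(M,-) \cong \Hom_G(M,-) \circ \ind_{\gfpr}^G(-)$ by ordinary Frobenius reciprocity, and apply the Grothendieck spectral sequence
$$
E_2^{p,q} \;=\; \Ext^p_G\bigl(M, R^q\ind_{\gfpr}^G(N)\bigr) \;\Longrightarrow\; \Ext^{p+q}_{\gfpr}(M,N).
$$
Since $\ind_{\gfpr}^G$ is exact, $R^q\ind_{\gfpr}^G = 0$ for $q \geq 1$, the spectral sequence collapses onto its zeroth row, and one obtains
$$
\Ext^i_{\gfpr}(M,N) \;\cong\; \Ext^i_G\bigl(M, \ind_{\gfpr}^G(N)\bigr) \;\cong\; \Ext^i_G(M, N \otimes \cgr),
$$
as required.

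There is no real obstacle here: the argument is essentially formal once one has the two inputs (Frobenius reciprocity and exactness of induction). The only slightly non-routine point is the exactness of $\ind_{\gfpr}^G$, which is built on the affineness of $G/\gfpr$ — and this is automatic because $\gfpr$ is finite. Verifying that this formal machinery actually delivers the isomorphism for \emph{both} $\Hom$ and higher $\Ext$ (rather than only $\Hom$, which is classical Frobenius reciprocity) is the whole point of passing through the Grothendieck spectral sequence.
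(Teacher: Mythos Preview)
Your argument is correct and is exactly what the paper has in mind: the paper does not give a detailed proof but simply invokes ``Generalized Frobenius Reciprocity and the fact that $G/G(\mathbb{F}_q)$ is affine,'' and your proposal unpacks precisely that---tensor identity, exactness of induction from affineness of the quotient, and the collapse of the Grothendieck spectral sequence.
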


%%%%%%%%%%
%%%%%Section 2.4
%%%%%%%%%%

\subsection{\bf Good Filtrations.} To make the desired computations of 
cohomology groups, we will make use of Proposition 2.3 (with $M = k = N$).  
In addition, we will use a special filtration on $\cgr$.  Recall that
a $G$-module $M$ has a {\em good filtration} if it admits a filtration with 
successive quotients of the form $H^0(\la)$, $\la \in X(T)_+$ \cite[II 4.16]{Jan}.

One may consider $k[G]$ as a $G \times G$-module via the left and right regular actions, 
respectively. A result due to Donkin and Koppinen \cite[II 4.20]{Jan} now says that $k[G]$ as a 
$G\times G$-module admits a good filtration with factors of the form $H^0(\la)\otimes H^0(\la^*)$, 
each $\la \in X(T)_+$ appearing exactly once. Here $(g_1, g_2) \in G \times G$ acts via $g_1 \otimes g_2$ 
on each factor. If one takes the diagonal embedding of $G$ into $G\times G$ one can use this fact to 
show that $k[G]$ admits a good filtration under the adjoint action of $G$.

For our purposes, we modify this slightly by using a partial
Frobenius twist.  Consider the composite
$$
\phi: G \overset{\text{diag}}{\to} G\times G \overset{1\times F^r}{\to} G\times G.
$$
That is, when we take the diagonal embedding, we apply the Frobenius morphism
$r$-times to the second factor giving $\phi(g) = (g,F^r(g))$.  Let $G \times G$
act on $k[G]$ via the left and right regular representations as above, and
then restrict this to a module over $G$ via $\phi$.  Denote the resulting
$G$-module by $k[G]^{\vee}$. The next proposition investigates filtrations 
on $k[G]^{\vee}$.

\begin{prop} As $G$-modules $\cgr \cong k[G]^{\vee}$.  Moreover, $\cgr$
has a filtration with factors of the form $H^0(\la)\otimes H^0(\la^*)^{(r)}$ 
with multiplicity one for each $\la \in X(T)_+$.
\end{prop}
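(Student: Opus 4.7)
The plan is to identify $\cgr$ with $k[G]^\vee$ via the Lang--Steinberg theorem, and then transport the Donkin--Koppinen good filtration on $k[G]$ through the restriction along $\phi$.

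For the isomorphism $\cgr \cong k[G]^\vee$, one starts from the identification $\ind_{\gfpr}^G(k) \cong k[G/\gfpr]$ as $G$-modules (using that $G/\gfpr$ is affine, as noted in Section~2.3). The key geometric input is the Lang--Steinberg map $\beta : G \to G$, $\beta(g) = g\,F^r(g)^{-1}$. For $h \in \gfpr$ one has $F^r(h) = h$, so $\beta(gh) = gh\,F^r(h)^{-1} F^r(g)^{-1} = \beta(g)$; Lang--Steinberg then shows that $\beta$ is surjective with fibers exactly the right $\gfpr$-cosets, producing a variety isomorphism $\bar\beta : G/\gfpr \xrightarrow{\sim} G$. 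Transporting the left $G$-action by translation on $G/\gfpr$ through $\bar\beta$ yields the twisted action $g \cdot y = g\,y\,F^r(g)^{-1}$ on the target copy of $G$. On coordinate rings, this becomes $(g \cdot f)(x) = f(g^{-1} x\, F^r(g))$, which is exactly the action $\phi$ induces on $k[G]$ from the standard left-right $(G \times G)$-action. Hence $\cgr \cong k[G]^\vee$ as $G$-modules.

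For the filtration, I would invoke the Donkin--Koppinen theorem \cite[II.4.20]{Jan}: as a $G \times G$-module via left and right translation, $k[G]$ admits a good filtration in which each factor $H^0(\la) \otimes H^0(\la^*)$ appears with multiplicity one, indexed by $\la \in X(T)_+$. Restricting this filtration along $\phi$ gives a filtration of $k[G]^\vee$ by $G$-submodules, and each $(G \times G)$-factor $H^0(\la) \otimes H^0(\la^*)$ becomes $H^0(\la) \otimes H^0(\la^*)^{(r)}$ as a $G$-module, because the second coordinate of $\phi$ is composed with $F^r$. Combining this with the isomorphism from the previous paragraph yields the asserted filtration of $\cgr$.

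The principal technical obstacle is the first step --- carefully matching the $G$-action induced by left translation on $G/\gfpr$ (transported through $\bar\beta$) with the $\phi$-restricted action on $k[G]$. This is a direct but bookkeeping-heavy verification involving inverses, left/right regular actions, and the Frobenius. Once this identification is in hand, the filtration assertion is essentially formal: restriction along a morphism of group schemes carries submodule filtrations to submodule filtrations, and one need only track how each quotient transforms under the Frobenius twist on the second tensor factor.
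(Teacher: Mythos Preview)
Your proposal is correct and follows essentially the same approach as the paper: both use the Lang map $g \mapsto g F^r(g)^{-1}$ to identify $\cgr$ (realized as right-$\gfpr$-invariant functions on $G$) with $k[G]^{\vee}$, and then restrict the Donkin--Koppinen good filtration of $k[G]$ along $\phi$ to obtain the filtration with factors $H^0(\la)\otimes H^0(\la^*)^{(r)}$. The only difference is cosmetic: you phrase the first step geometrically via the variety isomorphism $\bar\beta: G/\gfpr \xrightarrow{\sim} G$, whereas the paper works directly with the pullback $L^*$ on function spaces and checks $G$-equivariance by an explicit two-line computation.
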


\begin{proof} Let $f \in k[G]$ and $g, x \in G$. If we denote the action of $G$ 
on $k[G]^{\vee}$ by $\star$ then 
\begin{equation*}
(g\star f)(x) = f( g^{-1} x F^r(g)).
\end{equation*}
Next we define  a Lang map $L: G \to G$ via $L(g) = g F^r(g^{-1})$.  By setting  $L^{*}(f) = f\circ L$ one obtains 
a bijection (\cite[1.4]{Hum})
$$L^*: k[G] \to  \cgr = \ind_{\gfpr}^G(k) = \{ f \in k[G] \;| \;f(gh) = f(g) \text{ for all } g\in G , h \in \gfpr \}.$$
Observe that
$$L^{*}(g \star f)(x) = (g \star f) (L(x))= f(g^{-1} L(x) F^r(g))$$
while
\begin{eqnarray*}
(g (L^*(f)))(x)&=& L^*(f)(g^{-1}x) = f(L(g^{-1}x))= f(g^{-1}x F^r(x^{-1}g))\\
& =&f(g^{-1} x F^r(x^{-1})Fr^r(g)) = f(g^{-1}L(x)F^r(g)).
\end{eqnarray*}
Hence, $L^*$ is a $G$-equivariant bijective map from $k[G]^{\vee}$ onto  $\cgr$.

Since $k[G]$, viewed as a $G \times G$-module, has a good filtration with factors $H^0(\la)\otimes H^0(\la^*)$ 
and $\phi(g)$ acts on each factor via $g \otimes F^r(g)$, it follows that $k[G]^{\vee}$ has a filtration with 
factors of the form  $H^0(\la)\otimes H^0(\la^*)^{(r)}$ and multiplicity one for each $\la \in X(T)_+$, as claimed.
\end{proof}

%%%%%%%%%%
%%%%%Section 2.5
%%%%%%%%%%

\subsection{} Given weights $\la, \mu \in X(T)$, recall that we say $\mu < \la$ if and only if
$\la - \mu = \sum_{\al \in \Delta} c_{\al}\al$ for integers $c_{\al} \geq 0$. That is,
$\la - \mu$ must lie in the positive root lattice. In addition we say two weights $\la $ and $\mu$ are 
linked if there exists an element $w$ of the affine Weyl group such that $\mu = w \cdot \la$. Note that two 
weights $(\la_1, \la_2), (\mu_1, \mu_2) \in X(T\times T)$ are linked for the algebraic group $G \times G$ 
if and only if the components are linked for $G$.

For each dominant weight $\la$ we define two finite saturated sets, namely
$$\pi_{<\la}= \{ \mu \in X(T)_+ | \;\mu < \la\} \text{  and  }
\pi_{ \leq \la}= \{ \mu \in X(T)_+ |\; \mu \leq \la\}.$$ 
According to \cite[II A.15]{Jan} the $G\times G$-module $k[G]$ has two submodules $M_{< \la}$ and $M_{\leq \la}$, 
both admitting good filtrations with factors $H^0(\nu)\otimes H^0(\nu^*)$ where each $\nu \in \pi_{<\la}$ ($\pi_{ \leq \la}$, 
respectively) appears exactly once. By $S_{< \la}$ and $S_{\leq \la}$ we denote the $G \times G$-summands of  
$M_{< \la}$ and $M_{\leq \la}$, respectively, whose $G \times G$-composition factors have highest weights  contained 
in the same $G \times G$-linkage class as $(\la , \la^*)$.  Similarly we define the quotients $Q_{\nless\la} =k[G]/S_{<\la}$ and
$Q_{\nleq\la} = k[G]/S_{\leq\la}$.

The group $G$ acts on these modules via the embedding $\phi$. From Proposition 2.4 
and \cite[II 4.17]{Jan} one obtains the following result.

\begin{thm} For each $\la \in X(T)_{+}$, there exist short exact sequences of $G$-modules
$$0 \to S_{< \la} \to \cgr \to Q_{\nless \la} \to 0$$  
and 
$$
0 \to S_{\leq \la} \to \cgr \to Q_{\nleq \la} \to 0
$$
with the following properties:
\begin{itemize}
\item[(a)] $S_{< \la}$  ($S_{\leq \la}$) has a 
filtration with factors of the form $H^0(\nu)\otimes H^0(\nu^*)^{(r)}$ where $\nu < \la$ ($\nu \leq \la$) and $\nu $ is linked to $\la$.
\item[(b)] $Q_{\nless \la}$  ($Q_{\nleq \la}$) has a 
filtration with factors of the form $H^0(\nu)\otimes H^0(\nu^*)^{(r)}$ where $\nu \nless \la$ ($\nu \nleq \la$) or $\nu $ is not linked to $\la$.
\item[(c)] The multiplicity in all cases is one.
\end{itemize}
\end{thm}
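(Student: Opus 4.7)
The plan is to reduce the statement to the structure of $k[G]$ as a $G \times G$-module and then restrict along the twisted diagonal $\phi$ from Proposition 2.4. By that proposition, $\cgr \cong k[G]^{\vee}$, so every $G \times G$-submodule of $k[G]$ becomes a $G$-submodule of $\cgr$ upon restriction via $\phi(g) = (g, F^r(g))$. Hence the task is to produce the correct $G \times G$-submodules on the $k[G]$ side and then transport them through $\phi$.

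First I would invoke \cite[II A.15]{Jan}, which supplies $G \times G$-submodules $M_{<\la} \subseteq M_{\leq\la} \subseteq k[G]$ whose good filtrations pick out exactly the factors $H^0(\nu)\otimes H^0(\nu^*)$ indexed by $\nu \in \pi_{<\la}$, respectively $\nu \in \pi_{\leq\la}$, each with multiplicity one. Next I would apply the block decomposition of good-filtration modules \cite[II 4.17]{Jan} to split each of $M_{<\la}$ and $M_{\leq\la}$ as a direct sum of $G \times G$-summands indexed by the $G \times G$-linkage classes. Define $S_{<\la}$ and $S_{\leq\la}$ to be the summands corresponding to the linkage class of $(\la,\la^*)$. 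Using the observation from Section 2.5 that $(\nu_1,\nu_2)$ is $G \times G$-linked to $(\mu_1,\mu_2)$ iff the components are $G$-linked, these summands collect precisely the factors $H^0(\nu)\otimes H^0(\nu^*)$ with $\nu<\la$ (resp.\ $\nu\leq\la$) \emph{and} $\nu$ linked to $\la$, each with multiplicity one. This establishes (a) and one half of (c).

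Because $S_{<\la}$ and $S_{\leq\la}$ are genuine $G \times G$-summands of $k[G]$, the quotients $Q_{\nless\la} := k[G]/S_{<\la}$ and $Q_{\nleq\la} := k[G]/S_{\leq\la}$ inherit $G \times G$-good filtrations whose factors are exactly the complementary $H^0(\nu)\otimes H^0(\nu^*)$, i.e.\ those with $\nu\nless\la$ (resp.\ $\nu\nleq\la$) or $\nu$ not linked to $\la$, each with multiplicity one by the multiplicity-one property of the Donkin--Koppinen filtration. Restricting everything along $\phi$ produces the two claimed short exact sequences of $G$-modules. Finally, the twist on filtration factors is dealt with exactly as in the proof of Proposition 2.4: restricting a $G \times G$-module $H^0(\nu)\otimes H^0(\nu^*)$ along $\phi$ yields the $G$-module $H^0(\nu)\otimes H^0(\nu^*)^{(r)}$ since $\phi$ composes the second tensor coordinate with $F^r$. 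The principal technical input, and the main place one must be careful, is ensuring that $S_{<\la}$ and $S_{\leq\la}$ are direct summands rather than mere submodules, so that the quotients inherit the predicted filtration; this is precisely what the linkage-class block decomposition for $G\times G$ provides.
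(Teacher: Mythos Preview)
Your proposal follows essentially the same route as the paper: construct $M_{<\la}$ and $M_{\leq\la}$ via \cite[II A.15]{Jan}, cut out the linkage-class summand to obtain $S_{<\la}$ and $S_{\leq\la}$, form the quotients, and restrict everything along $\phi$ using Proposition 2.4. The paper's proof is little more than the sentence ``From Proposition 2.4 and \cite[II 4.17]{Jan} one obtains the following result,'' and you have unpacked that correctly.

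There is one imprecision worth flagging. You write that $S_{<\la}$ and $S_{\leq\la}$ are ``genuine $G\times G$-summands of $k[G]$,'' and you identify this summand property as the ``principal technical input'' needed so that the quotients inherit good filtrations. But the block decomposition only makes $S_{<\la}$ a summand of $M_{<\la}$; it is merely a \emph{submodule} of $k[G]$, not a direct summand (the relevant block of $k[G]$ is infinite-dimensional and there is no reason for a finite piece of it to split off). Fortunately the summand property is not what you actually need: the fact that $Q_{\nless\la}=k[G]/S_{<\la}$ has a good filtration with the complementary factors follows from the standard result (this is the content of \cite[II 4.17]{Jan} you want here) that if $V$ has a good filtration and $V'\subseteq V$ has a good filtration, then $V/V'$ has a good filtration, with multiplicities subtracting. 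So your conclusion stands, but the justification should be rerouted through this closure property rather than through a nonexistent direct-sum decomposition of $k[G]$.
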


%%%%%%%%%
%%Section 2.6
%%%%%%%%%%

\subsection{\bf Upper Bounds for $\text{Ext}^i_{\gfpr}$ and  Vanishing Criteria.} The next theorem and its corollaries illustrate the 
usefulness of the existence of the filtrations in Proposition 2.4. 

\begin{thm} Let $M, N$ be rational $G$-modules and $i\geq 0$.  
Then
$$\dim \Ext_{\gfpr}^i (M,N)\leq \sum_{\la \in X(T)_+} \dim\Ext_{G}^i(M,N\otimes H^0(\la)\otimes H^0(\la^*)^{(r)}).$$
\end{thm}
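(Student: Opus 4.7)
The plan is to combine Proposition~2.3 and Proposition~2.4 via a long-exact-sequence argument applied to the filtration of $\cgr$. First, by Generalized Frobenius Reciprocity (Proposition~2.3),
$$\Ext^i_{\gfpr}(M,N) \;\cong\; \Ext^i_G(M,\,N \otimes \cgr),$$
so the task reduces to bounding the right-hand side.

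Proposition~2.4 supplies a filtration $0 = \mathcal{F}_0 \subset \mathcal{F}_1 \subset \mathcal{F}_2 \subset \cdots$ exhausting $\cgr$, with $\mathcal{F}_n/\mathcal{F}_{n-1} \cong H^0(\la_n) \otimes H^0(\la_n^*)^{(r)}$ for some enumeration $\la_1, \la_2, \dots$ of $X(T)_+$ refining the partial order underlying the good filtration. Because $N \otimes -$ is exact, one obtains a corresponding filtration of $N \otimes \cgr$ with layers $N \otimes H^0(\la_n) \otimes H^0(\la_n^*)^{(r)}$. For each $n \geq 1$ the short exact sequence
$$0 \to N \otimes \mathcal{F}_{n-1} \to N \otimes \mathcal{F}_n \to N \otimes H^0(\la_n) \otimes H^0(\la_n^*)^{(r)} \to 0$$
induces a long exact sequence in $\Ext_G^{\bullet}(M,-)$, and subadditivity of dimension across any three consecutive terms gives
$$\dim \Ext_G^i(M, N \otimes \mathcal{F}_n) \;\leq\; \dim \Ext_G^i(M, N \otimes \mathcal{F}_{n-1}) + \dim \Ext_G^i(M, N \otimes H^0(\la_n) \otimes H^0(\la_n^*)^{(r)}).$$
Induction on $n$ then yields
$$\dim \Ext_G^i(M, N \otimes \mathcal{F}_n) \;\leq\; \sum_{j=1}^{n} \dim \Ext_G^i(M, N \otimes H^0(\la_j) \otimes H^0(\la_j^*)^{(r)}).$$

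The final step passes to the limit $n \to \infty$. Since $N \otimes \cgr$ is the exhaustive union of the submodules $N \otimes \mathcal{F}_n$ and rational $\Ext_G^i(M,-)$ commutes with filtered direct limits of coefficients, the left-hand side approaches $\dim \Ext_G^i(M, N \otimes \cgr) = \dim \Ext^i_{\gfpr}(M,N)$, while the right-hand side tends to the full sum over all $\la \in X(T)_+$. This delivers the claimed inequality.

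The principal technical point is precisely this colimit step: the filtration of $\cgr$ is necessarily infinite, so one must confirm that the subadditive bound survives passage to the limit, for which the key ingredient is the commutation of rational Ext with filtered colimits (visible, for instance, via the Hochschild cobar complex in the relevant case $M$ finite-dimensional). Once this commutation is accepted, the entire argument reduces to iterating a single long exact sequence in $\Ext_G$.
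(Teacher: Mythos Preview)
Your proof is correct and follows essentially the same approach as the paper, which simply states that the result ``follows immediately from Proposition 2.3, Proposition 2.4 and the long exact sequence in cohomology associated to a short exact sequence.'' You have spelled out the details the paper leaves implicit, and your attention to the colimit step (needed because the filtration is infinite) is a genuine technical point that the paper's one-line proof glosses over.
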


\begin{proof} 
The result  follows immediately from Proposition 2.3, Proposition 2.4 and the long exact sequence in cohomology 
associated to a short exact sequence.
\end{proof}

One obtains the following vanishing criterion. 
\begin{cor}[A] Let $M, N$ be rational $G$-modules and $i\geq 0$.  If 
$\Ext_{G}^i(M,N\otimes H^0(\la)\otimes H^0(\la^*)^{(r)}) = 0$ for all $\la \in X(T)_{+}$,
then $\Ext_{\gfpr}^i(M,N) = 0$.
\end{cor}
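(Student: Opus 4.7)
The plan is to obtain this vanishing criterion as an immediate consequence of the upper bound established in Theorem 2.6. Since that theorem provides the inequality
\[
\dim \Ext_{\gfpr}^i(M,N) \leq \sum_{\la \in X(T)_+} \dim\Ext_{G}^i(M,N\otimes H^0(\la)\otimes H^0(\la^*)^{(r)}),
\]
the hypothesis that every summand on the right-hand side vanishes forces the sum to be zero, which in turn forces $\dim\Ext_{\gfpr}^i(M,N) = 0$. There is essentially no further work to do beyond invoking the theorem.

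If one wanted to avoid citing Theorem 2.6 directly, the same argument could be repeated in one line: by Proposition 2.3 we identify $\Ext_{\gfpr}^i(M,N)$ with $\Ext_G^i(M, N\otimes\cgr)$, and then the filtration of $\cgr$ supplied by Proposition 2.4, whose successive quotients have the form $H^0(\la)\otimes H^0(\la^*)^{(r)}$, gives via the induced long exact sequences in $G$-cohomology a chain of inclusions whose terms are controlled by the groups $\Ext_G^i(M, N\otimes H^0(\la)\otimes H^0(\la^*)^{(r)})$. Under the hypothesis, each of these groups is zero, so $\Ext_G^i(M, N\otimes\cgr) = 0$ and the conclusion follows. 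There is no real obstacle here; the corollary is just a convenient restatement of Theorem 2.6 in a form that will be applied later (with $M = N = k$) to reduce the vanishing question for $\opH^i(\gfpr,k)$ to a question about $G$-extensions of tensor products of induced modules.
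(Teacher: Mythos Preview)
Your proposal is correct and matches the paper's approach exactly: the corollary is stated as an immediate consequence of Theorem 2.6, and your observation that vanishing of each summand forces the left-hand side to vanish is precisely the intended argument. The alternative unpacking via Proposition 2.3 and Proposition 2.4 is just the proof of Theorem 2.6 itself, so either route is fine.
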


In particular, in the special case of $M = k = N,$ we get the following 
criterion for a vanishing range in cohomology. We will see later how to identify such 
an $m$ for a given $G$.

\begin{cor}[B] Let $m$ be the least positive integer such that there exists
$\la \in X(T)_{+}$ with $\opH^m(G,H^0(\la)\otimes H^0(\la^*)^{(r)}) \neq 0$.
Then $\opH^i(\gfpr,k) \cong \opH^i(G,\cgr) = 0$ for $0 < i < m$.  
\end{cor}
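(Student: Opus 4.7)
The plan is to obtain the statement as a direct consequence of the apparatus already assembled: Proposition 2.3 (Generalized Frobenius Reciprocity for the induction functor $\cgr$), Proposition 2.4 (the good filtration of $\cgr$ with factors $H^0(\la)\otimes H^0(\la^*)^{(r)}$), and the vanishing criterion in Corollary A.

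First I would establish the isomorphism $\opH^i(\gfpr,k) \cong \opH^i(G,\cgr)$. This is immediate by specializing Proposition 2.3 to $M = N = k$: one gets $\Ext^i_{\gfpr}(k,k) \cong \Ext^i_G(k, k \otimes \cgr) = \Ext^i_G(k,\cgr)$, and both $\Ext$'s compute the respective cohomology groups $\opH^i(\gfpr,k)$ and $\opH^i(G,\cgr)$.

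Next I would derive the vanishing conclusion by applying Corollary A to $M = N = k$. Fix $i$ with $0 < i < m$. By the minimality in the definition of $m$, for every $\la \in X(T)_+$ we have
\[
\opH^i\bigl(G,\, H^0(\la)\otimes H^0(\la^*)^{(r)}\bigr) = \Ext^i_G\bigl(k,\, k \otimes H^0(\la)\otimes H^0(\la^*)^{(r)}\bigr) = 0.
\]
Corollary A (with $M = N = k$) then yields $\opH^i(\gfpr,k) = \Ext^i_{\gfpr}(k,k) = 0$, as required. Combined with the isomorphism from the first step, this gives $\opH^i(\gfpr,k) \cong \opH^i(G,\cgr) = 0$ for $0 < i < m$.

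There is essentially no obstacle here: the content of the corollary is entirely packaged inside Corollary A and Proposition 2.3, and the argument is a routine specialization to trivial coefficients. The real work — that is, actually identifying an explicit integer $m$ for which the hypothesis of this corollary can be verified in a given root system — is deferred to the later sections (Sections 4--6) where LHS spectral sequence arguments and Kostant partition function estimates will be used to analyze $\opH^i(G, H^0(\la)\otimes H^0(\la^*)^{(r)})$.
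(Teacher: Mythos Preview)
Your proposal is correct and matches the paper's approach exactly: the paper presents Corollary~2.6(B) simply as the special case $M = N = k$ of Corollary~2.6(A), with no additional argument given. Your explicit invocation of Proposition~2.3 for the isomorphism $\opH^i(\gfpr,k) \cong \opH^i(G,\cgr)$ is the only extra detail, and it is precisely the right justification.
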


%%%%%%%%%%%%%
%%Section 2.7
%%%%%%%%%%%%%

\subsection{\bf Non-vanishing.} While the identification of an $m$
satisfying Corollary 2.6(B) gives a vanishing range, it does not a priori
follow that $\opH^m(\gfpr,k) \neq 0$. In this section, we develop 
some conditions under which this conclusion could be made, as well as conditions 
under which one might be able to precisely identify the cohomology group.
As in Corollary 2.6(A), we have the following.

\begin{prop}[A] Let $\la \in X(T)_{+}$ and $M \in \{S_{<\la}, S_{\leq\la},Q_{\nless \la},Q_{\nleq \la}$\}.
Suppose that 
$$\opH^i(G,H^0(\nu)\otimes H^0(\nu^*)^{(r)}) = 0$$ for all $\nu \in X(T)_{+}$
which appear in the filtration for $M$.  Then $\opH^i(G,M) = 0$.
\end{prop}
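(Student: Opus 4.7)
The plan is to exploit the good filtration on $M$ supplied by Theorem 2.5 and to argue inductively via the long exact sequence in $G$-cohomology. Enumerating the weights occurring in that filtration produces a chain $0 = M_0 \subset M_1 \subset M_2 \subset \cdots \subset M$ of $G$-submodules with each successive quotient $M_{j+1}/M_j$ isomorphic to $H^0(\nu_j)\otimes H^0(\nu_j^*)^{(r)}$ for some $\nu_j$ among the weights allowed in the hypothesis. In the cases $M = S_{<\la}$ or $M = S_{\leq\la}$ the index set ($\pi_{<\la}$ or $\pi_{\leq\la}$) is finite, so this chain terminates at $M$; for $M = Q_{\nless\la}$ or $M = Q_{\nleq\la}$ the chain is infinite and $M = \bigcup_j M_j$.

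Next, I would prove by induction on $j$ that $\opH^i(G,M_j) = 0$. The base case $M_0 = 0$ is trivial. For the inductive step, the short exact sequence
\[
0 \to M_j \to M_{j+1} \to H^0(\nu_j)\otimes H^0(\nu_j^*)^{(r)} \to 0
\]
produces the fragment
\[
\opH^i(G,M_j) \to \opH^i(G,M_{j+1}) \to \opH^i(G,H^0(\nu_j)\otimes H^0(\nu_j^*)^{(r)})
\]
of the long exact sequence in cohomology. The left term vanishes by the inductive hypothesis and the right term vanishes by the hypothesis of the proposition, so exactness forces $\opH^i(G,M_{j+1}) = 0$. In the two finite cases $S_{<\la}$ and $S_{\leq\la}$, the induction terminates and the result is immediate.

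For the remaining cases $M = Q_{\nless\la}$ and $M = Q_{\nleq\la}$, one must pass to the colimit of the chain. Rational $G$-cohomology commutes with filtered colimits of rational $G$-modules (visible, for instance, from the Hochschild cochain complex, which commutes with filtered colimits in the coefficients), so from $M = \varinjlim_j M_j$ one concludes
\[
\opH^i(G,M) \cong \varinjlim_j \opH^i(G,M_j) = 0.
\]
The one genuinely non-formal point is precisely this commutation of $\opH^i(G,-)$ with filtered colimits; everything else is bookkeeping with the long exact sequence. I expect no further obstacle.
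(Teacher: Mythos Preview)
Your proof is correct and follows the same approach the paper has in mind: the paper does not give an explicit argument for Proposition 2.7(A) but simply writes ``As in Corollary 2.6(A)'', i.e., use the filtration from Theorem 2.5 together with the long exact sequence in cohomology. Your version is more explicit than the paper's, in particular in handling the infinite filtration for $Q_{\nless\la}$ and $Q_{\nleq\la}$ via the commutation of rational $G$-cohomology with filtered colimits (a point the paper leaves implicit).
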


The next proposition reduces the problem of showing the non-vanishing of 
$\opH^m(G,\cgr)$ (and hence of $\opH^m(\gfpr,k)$) to showing non-vanishing for a 
submodule of $\cgr$.

\begin{prop}[B] Let $m$ be a positive integer. For any $\la \in X(T)_{+}$,
\begin{itemize}
\item[(i)] if $\opH^{m+1}(G,S_{<\la}) = 0$, then the map $\opH^m(G,\cgr) \to \opH^m(G,Q_{\nless \la})$
is surjective;
\item[(ii)] if $\opH^m(G,S_{<\la}) = 0$, then the map $\opH^m(G,\cgr) \to \opH^m(G,Q_{\nless \la})$ 
is injective;
\item[(iii)] if the conditions in (i) and (ii) hold, then $\opH^m(G,\cgr) \cong \opH^m(G,Q_{\nless \la})$.
\end{itemize}
\end{prop}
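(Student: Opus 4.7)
The plan is to derive all three statements directly from the long exact sequence in $G$-cohomology associated to the short exact sequence
$$0 \to S_{<\la} \to \cgr \to Q_{\nless \la} \to 0$$
provided by Theorem 2.5. Once this is in hand, the three claims are purely diagram-chasing statements about a three-term piece of that sequence.

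First I would write out the relevant piece of the long exact sequence, namely
$$\cdots \to \opH^m(G,S_{<\la}) \to \opH^m(G,\cgr) \to \opH^m(G,Q_{\nless \la}) \to \opH^{m+1}(G,S_{<\la}) \to \cdots$$
For (i), the vanishing of $\opH^{m+1}(G,S_{<\la})$ forces the connecting homomorphism out of $\opH^m(G,Q_{\nless \la})$ to be zero, so the preceding map $\opH^m(G,\cgr) \to \opH^m(G,Q_{\nless \la})$ is surjective by exactness at $\opH^m(G,Q_{\nless \la})$. For (ii), the vanishing of $\opH^m(G,S_{<\la})$ means the image of $\opH^m(G,S_{<\la})$ in $\opH^m(G,\cgr)$ is zero, so the next map $\opH^m(G,\cgr) \to \opH^m(G,Q_{\nless \la})$ is injective by exactness at $\opH^m(G,\cgr)$. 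Part (iii) is then immediate, as an injective surjection is an isomorphism.

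Since everything reduces to the existence of the short exact sequence of Theorem 2.5 and the functoriality of $\opH^\bullet(G,-)$, there is no real obstacle here; the work has already been done in establishing the filtration of $\cgr$ in Section 2.4-2.5. The proposition should really be viewed as packaging the long exact sequence into a form convenient for the later non-vanishing arguments, where the vanishing hypotheses on $S_{<\la}$ will be verified via Proposition (A) by checking that $\opH^i(G, H^0(\nu) \otimes H^0(\nu^*)^{(r)}) = 0$ for all $\nu < \la$ linked to $\la$ and suitable $i$.
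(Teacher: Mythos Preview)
Your proof is correct and follows exactly the same approach as the paper: apply the long exact sequence in cohomology to the short exact sequence $0 \to S_{<\la} \to \cgr \to Q_{\nless \la} \to 0$ from Theorem~2.5, and read off (i) and (ii) from exactness at $\opH^m(G,Q_{\nless\la})$ and $\opH^m(G,\cgr)$ respectively. The paper's proof is in fact slightly terser than yours, simply stating that the claims follow immediately from the long exact sequence.
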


\begin{proof} Consider the short exact sequence
$$
0 \to S_{<\la} \to \cgr \to Q_{\nless \la} \to 0
$$
and the associated long exact sequence in cohomology
$$
\cdots \to \opH^m(G,S_{<\la}) \to \opH^m(G,\cgr)
        \to \opH^m(G,Q_{\nless \la}) \to \opH^{m+1}(G,S_{<\la}) \to \cdots.
$$
The claims in (i) and (ii) follow immediately from this exact sequence. 
\end{proof}

\begin{rem} In order to show the non-vanishing of $\opH^m(\gfpr,k)$, 
condition (i) is the crucial condition.
Whereas condition (ii) potentially allows us to identify $\opH^m(\gfpr,k)$ precisely.
Note that condition (ii) is immediately satisfied by any weight $\la$ which is 
minimal with respect to the above standard ordering such that
$\opH^m(G,H^0(\la)\otimes H^0(\la^*)^{(r)}) \neq 0$.
\end{rem}

Having reduced the problem to $\opH^i(G,Q_{\nless \la})$, we make some 
similar homological observations about this group.

\begin{prop}[C] Let $m$ be the least positive integer such that there exists
$\nu \in X(T)_{+}$ with $\opH^m(G,H^0(\nu)\otimes H^0(\nu^*)^{(r)}) \neq 0$.  
For any $\la \in X(T)_+$,
\begin{itemize}
\item[(i)] the map $\opH^m(G,H^0(\la)\otimes H^0(\la^*)^{(r)}) \to \opH^m(G,Q_{\nless \la})$ is injective;
\item[(ii)] if $\opH^m(G,Q_{\nleq \la}) = 0$, then $\opH^m(G,H^0(\la)\otimes H^0(\la^*)^{(r)}) \cong 
\opH^m(G,Q_{\nless \la})$.
\end{itemize}
\end{prop}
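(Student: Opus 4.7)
The plan is to build a short exact sequence linking $H^0(\la)\otimes H^0(\la^*)^{(r)}$, $Q_{\nless\la}$, and $Q_{\nleq\la}$, and then extract both assertions from the induced long exact sequence in $G$-cohomology, using the minimality of $m$ via Proposition (A).

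First, since $\pi_{<\la}\subset\pi_{\leq\la}$, the submodules furnished by Theorem 2.5 fit into inclusions $S_{<\la}\subseteq S_{\leq\la}\subseteq \cgr$. The quotient $S_{\leq\la}/S_{<\la}$ inherits a filtration whose factors are indexed by those $\nu\in\pi_{\leq\la}\setminus\pi_{<\la}=\{\la\}$ linked to $\la$, so in fact $S_{\leq\la}/S_{<\la}\cong H^0(\la)\otimes H^0(\la^*)^{(r)}$. Dividing the inclusion $S_{\leq\la}\hookrightarrow\cgr$ by $S_{<\la}$, and using $\cgr/S_{\leq\la}\cong Q_{\nleq\la}$ and $\cgr/S_{<\la}\cong Q_{\nless\la}$, yields the short exact sequence
$$
0\to H^0(\la)\otimes H^0(\la^*)^{(r)}\to Q_{\nless\la}\to Q_{\nleq\la}\to 0.
$$

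Next, I would apply $\opH^\bullet(G,-)$ to obtain the long exact sequence, whose relevant portion is
$$
\opH^{m-1}(G,Q_{\nleq\la})\to\opH^m(G,H^0(\la)\otimes H^0(\la^*)^{(r)})\to\opH^m(G,Q_{\nless\la})\to\opH^m(G,Q_{\nleq\la}).
$$
For (i), I would show that $\opH^{m-1}(G,Q_{\nleq\la})=0$: by Theorem 2.5(b) every factor in the filtration of $Q_{\nleq\la}$ has the form $H^0(\nu)\otimes H^0(\nu^*)^{(r)}$, and the minimality hypothesis on $m$ forces $\opH^{m-1}(G,H^0(\nu)\otimes H^0(\nu^*)^{(r)})=0$ for every such $\nu$, so Proposition (A) delivers the vanishing. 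Injectivity of the map in (i) follows at once. For (ii), adjoining the hypothesis $\opH^m(G,Q_{\nleq\la})=0$ makes the map on the right have zero target, so surjectivity is added to the injectivity from (i), giving the claimed isomorphism.

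The only delicate technical point is verifying that $S_{\leq\la}/S_{<\la}$ is genuinely isomorphic to $H^0(\la)\otimes H^0(\la^*)^{(r)}$, which uses the multiplicity-one clause of Theorem 2.5(c) together with the fact that $\pi_{\leq\la}\setminus\pi_{<\la}$ consists only of $\la$. Once the short exact sequence is in place, the remainder is a routine long exact sequence chase, so no substantial obstacle is expected beyond the careful identification of the sub- and quotient modules.
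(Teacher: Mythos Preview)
Your argument is essentially the paper's for $m>1$, but it has a genuine gap when $m=1$. The minimality hypothesis says $m$ is the least \emph{positive} integer with nonvanishing cohomology, so it tells you nothing about degree $m-1=0$. In fact $\opH^{0}(G,H^0(\nu)\otimes H^0(\nu^*)^{(r)})\cong\Hom_G(V(\nu)^{(r)},H^0(\nu))$ is nonzero precisely when $\nu=0$, and the factor $\nu=0$ \emph{can} occur in the filtration of $Q_{\nleq\la}$: by Theorem 2.5(b) it appears whenever $0$ is not linked to $\la$. So your appeal to Proposition (A) to conclude $\opH^{0}(G,Q_{\nleq\la})=0$ fails in that situation.

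The paper repairs this by treating $m=1$ separately. If $\la$ is linked to $0$, then $\nu=0$ does not appear in the filtration of $Q_{\nleq\la}$ (since $0\leq\la$ and $0$ is linked to $\la$), so $\Hom_G(k,Q_{\nleq\la})=0$ and your long exact sequence argument goes through. If $\la$ is not linked to $0$, the paper instead decomposes $Q_{\nless\la}=M_1\oplus M_2$ according to linkage with $\la$, and runs the short exact sequence argument on the $M_1$ summand alone (where the trivial factor cannot appear), handling $M_2$ separately. You need to add this case analysis.
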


\begin{proof} Consider the short exact sequence 
$$
0 \to H^0(\la)\otimes H^0(\la^*)^{(r)} \to Q_{\nless \la} \to Q_{\nleq \la} \to 0
$$
and the associated LES
$$
\cdots \rightarrow \opH^{m-1}(G,Q_{\nleq \la}) \to \opH^m(G,H^0(\la)\otimes H^0(\la^*)^{(r)}) \to
        \opH^m(G,Q_{\nless \la}) \to \opH^m(G,Q_{\nleq \la}) \to \cdots.
$$
If $m > 1$, the claims follow immediately since the first term is zero by minimality of $m$.

Suppose that $m = 1$.  Suppose first that $\la$ is linked to the zero weight.  
Since we certainly cannot have $0 > \lambda$, by Theorem 2.5(b),
$\opH^0(G,Q_{\nleq \la}) = \Hom_G(k,Q_{\nleq \la}) = 0$,
and the argument follows as above.

Suppose now that $\la$ is not linked to the zero weight.
The module $Q_{\nless \la}$ may be decomposed as a direct sum  $M_1\oplus M_2$ where 
$M_1$ has a filtration with factors of the form $H^0(\nu)\otimes H^0(\nu^*)^{(1)}$ with
$\nu$ linked to $\la$ and $M_2$ has such a filtration with $\nu$ not linked to $\la$.
Then $\opH^1(G,Q_{\nless \la}) = \opH^1(G,M_1)\oplus \opH^1(G,M_2)$.  Consider the 
short exact sequence
\begin{equation}\label{ses}
0 \to H^0(\la)\otimes H^0(\la^*)^{(1)} \to M_1 \to M_1' \to 0,
\end{equation} 
which defines a module $M_1'$ with a corresponding filtration.
Observe that $Q_{\nleq \la} \cong M_1'\oplus M_2$, and hence
$\opH^1(G,Q_{\nleq\la}) \cong \opH^1(G,M_1')\oplus \opH^1(G,M_2)$.
Since $\la$ is not linked to zero, $\opH^0(G,M_1') = \Hom_G(k,M_1') = 0$.  
Arguing as above with the LES
associated to (\ref{ses}) gives
$$
\opH^1(G,H^0(\la)\otimes H^0(\la^*)^{(1)}) \into \opH^1(G,M_1) \into \opH^1(G,Q_{\nless\la}),
$$
and so part (i) holds.  For part (ii), it follows from the additional assumption that
$\opH^1(G,M_1') = 0$ and $\opH^1(G,M_2) = 0$.  Again using the LES associated to (\ref{ses}),
it follows that
$$
\opH^1(G,H^0(\la)\otimes H^0(\la^*)^{(1)}) \cong \opH^1(G,M_1) = 
\opH^1(G,M_1)\oplus\opH^1(G,M_2) \cong \opH^1(G,Q_{\nless\la})
$$
as claimed.

\end{proof}

%%%%%%%%%%%%%
%%Section 2.8
%%%%%%%%%%%%%

\subsection{} Combining the propositions in the preceding section, we can obtain a condition under which 
we have sharp vanishing bounds, and an explicit identification
of $\opH^m(\gfpr,k)$ with a single $G$-cohomology group.

\begin{thm}[A] Let $m$ be the least positive integer such that there exists
$\nu \in X(T)_{+}$ with $\opH^m(G,H^0(\nu)\otimes H^0(\nu^*)^{(r)}) \neq 0$. 
Let $\la \in X(T)_+$ be such that 
$\opH^m(G,H^0(\la)\otimes H^0(\la^*)^{(r)}) \neq 0$.  
Suppose $\opH^{m+1}(G,H^0(\nu)\otimes H^0(\nu^*)^{(r)}) = 0$ for all $\nu < \la$ that are linked to $\la$.
Then
\begin{itemize}
\item[(i)] $\opH^i(\gfpr,k) = 0$ for $0 < i < m$;
\item[(ii)] $\opH^m(\gfpr,k) \neq 0$;
\item[(iii)] if, in addition, $\opH^{m}(G,H^0(\nu)\otimes H^0(\nu^*)^{(r)}) = 0$
for all $\nu \in X(T)_+$ with $\nu \neq \la$, then 
$\opH^m(\gfpr,k) \cong \opH^m(G,H^0(\la)\otimes H^0(\la^*)^{(r)})$.
\end{itemize}
\end{thm}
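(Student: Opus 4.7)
The plan is to assemble (i)--(iii) from the machinery already built in Section~2, principally Proposition~2.3, Theorem~2.5, Corollary~2.6(B), and the three Propositions (A), (B), (C) of Section~2.7. None of the steps is deep in isolation; the proof is essentially a careful bookkeeping exercise that matches the hypotheses to the correct short exact sequences derived from the filtration of $\cgr$.

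For part (i), the minimality of $m$ means that $\opH^i(G, H^0(\nu)\otimes H^0(\nu^*)^{(r)}) = 0$ for every $\nu \in X(T)_+$ and every $0 < i < m$. Corollary~2.6(B) then yields $\opH^i(\gfpr,k) = 0$ in that range.

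For part (ii), I would exploit the short exact sequence $0 \to S_{<\la} \to \cgr \to Q_{\nless\la} \to 0$ from Theorem~2.5. By Theorem~2.5(a), the filtration of $S_{<\la}$ has factors $H^0(\nu)\otimes H^0(\nu^*)^{(r)}$ with $\nu < \la$ and $\nu$ linked to $\la$, which is precisely the set on which the hypothesis guarantees vanishing of $\opH^{m+1}$. Proposition~(A) then gives $\opH^{m+1}(G, S_{<\la}) = 0$, so Proposition~(B)(i) yields a surjection $\opH^m(G,\cgr) \twoheadrightarrow \opH^m(G, Q_{\nless\la})$. Simultaneously, Proposition~(C)(i) produces an injection $\opH^m(G, H^0(\la)\otimes H^0(\la^*)^{(r)}) \hookrightarrow \opH^m(G, Q_{\nless\la})$, and the left-hand side is nonzero by assumption. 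Combining these with Proposition~2.3 gives $\opH^m(\gfpr,k) \cong \opH^m(G,\cgr) \neq 0$.

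For part (iii), the extra hypothesis that $\opH^m(G, H^0(\nu)\otimes H^0(\nu^*)^{(r)}) = 0$ for every $\nu \neq \la$ lets me upgrade both inequalities to isomorphisms. Since every factor appearing in $S_{<\la}$ satisfies $\nu < \la$ (hence $\nu \neq \la$), Proposition~(A) gives $\opH^m(G, S_{<\la}) = 0$, and Proposition~(B)(ii) combined with the surjection from part (ii) shows $\opH^m(G,\cgr) \cong \opH^m(G, Q_{\nless\la})$. Likewise, every factor in the filtration of $Q_{\nleq\la}$ has $\nu \nleq \la$ or $\nu$ unlinked to $\la$, and in either case $\nu \neq \la$; Proposition~(A) therefore gives $\opH^m(G, Q_{\nleq\la}) = 0$, and Proposition~(C)(ii) provides the isomorphism $\opH^m(G, H^0(\la)\otimes H^0(\la^*)^{(r)}) \cong \opH^m(G, Q_{\nless\la})$. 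Concatenating these with Proposition~2.3 yields $\opH^m(\gfpr,k) \cong \opH^m(G, H^0(\la)\otimes H^0(\la^*)^{(r)})$.

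The only potential snag is making sure the linkage condition attached to the filtration of $S_{<\la}$ in Theorem~2.5(a) lines up with the hypothesis, namely that one only needs $\opH^{m+1}$-vanishing for weights $\nu < \la$ that are \emph{linked} to $\la$, rather than for all $\nu < \la$. Since this linkage restriction is built directly into Theorem~2.5, the argument proceeds without friction; the rest is a routine assembly of the long exact sequences.
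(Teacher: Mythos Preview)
Your proposal is correct and follows essentially the same route as the paper's own proof: part (i) via Corollary~2.6(B), part (ii) by combining Proposition~2.7(A), 2.7(B)(i), and 2.7(C)(i), and part (iii) by adding in 2.7(B)(ii)/(iii) and 2.7(C)(ii). Your remark about the linkage restriction in Theorem~2.5(a) matching the hypothesis is exactly the point, and the paper uses it in the same way.
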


\begin{proof}  Part (i) follows from Corollary 2.6(B).   For part (ii), note first that, 
by the hypothesis on $\la$ and Proposition 2.7(C)(i),
$\opH^m(G,Q_{\nless \la}) \neq 0$.  On the other hand, by the hypothesis on 
weights less than $\la$ and Proposition 2.7(A), $\opH^{m+1}(G,S_{<\la}) = 0$.  
Hence, by Proposition 2.7(B)(i), the map 
$\opH^m(G,\cgr) \to \opH^m(G,Q_{\nless \la})$ is surjective.  Therefore, by Proposition 2.3,
$\opH^m(\gfpr,k) \cong \opH^m(G,\cgr) \neq 0$ as claimed.

For part (iii), by the added assumption and Proposition 2.7(A), we have
$\opH^m(G,S_{<\la}) = 0$ and $\opH^m(G,Q_{\nleq \la}) = 0$.  By Proposition 2.7(B)(iii)
and Proposition 2.7(C)(ii), we have $\opH^m(G,\cgr) \cong \opH^m(G,Q_{\nless \la}) \cong 
\opH^m(G,H^0(\la)\otimes H^0(\la^*)^{(r)})$ as claimed.
\end{proof}

From the filtration on $\cgr$ in Proposition 2.4, $\opH^i(\gfpr,k) \cong \opH^i(G,\cgr)$
can be decomposed as a direct sum over linkage classes of dominant weights. As 
such, using an analogous argument, a slightly weaker condition for non-vanishing 
can be obtained.

\begin{thm} [B] For a fixed linkage class $\mathcal{L}$, let $m$ be the least 
positive integer such that there exists
$\nu \in \mathcal{L}$ with $\opH^m(G,H^0(\nu)\otimes H^0(\nu^*)^{(r)}) \neq 0$.  Let $\la \in \mathcal{L}$ be such that
$\opH^m(G,H^0(\la)\otimes H^0(\la^*)^{(r)}) \neq 0$.  
Suppose $\opH^{m+1}(G,H^0(\nu)\otimes H^0(\nu^*)^{(r)}) = 0$ for all $\nu < \la$
in $\mathcal{L}$. Then $\opH^m(\gfpr,k) \neq 0$.
\end{thm}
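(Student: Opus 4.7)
The plan is to mirror the argument of Theorem A(ii) while restricting all constructions to the fixed linkage class $\mathcal{L}$. The key preliminary observation, noted by the authors just before the theorem, is that the filtration of $\cgr$ in Proposition 2.4 respects the $G\times G$-linkage decomposition, so that $\cgr \cong \bigoplus_{\mathcal{L}'} \cgr^{\mathcal{L}'}$, where each summand $\cgr^{\mathcal{L}'}$ carries a filtration by factors $H^0(\nu)\otimes H^0(\nu^*)^{(r)}$ indexed with multiplicity one by $\nu \in \mathcal{L}'$. Consequently $\opH^m(G,\cgr) \cong \bigoplus_{\mathcal{L}'} \opH^m(G,\cgr^{\mathcal{L}'})$, and by Proposition 2.3 it suffices to establish $\opH^m(G,\cgr^{\mathcal{L}}) \neq 0$.

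I would then construct the $\mathcal{L}$-restricted analogues $S^{\mathcal{L}}_{<\la}$, $S^{\mathcal{L}}_{\leq\la}$, $Q^{\mathcal{L}}_{\nless\la}$, and $Q^{\mathcal{L}}_{\nleq\la}$ of the modules in Theorem 2.5, each filtered by $H^0(\nu)\otimes H^0(\nu^*)^{(r)}$ for $\nu \in \mathcal{L}$ satisfying the appropriate ordering relation. These fit into two short exact sequences
\begin{equation*}
0 \to S^{\mathcal{L}}_{<\la} \to \cgr^{\mathcal{L}} \to Q^{\mathcal{L}}_{\nless\la} \to 0, \qquad
0 \to H^0(\la)\otimes H^0(\la^*)^{(r)} \to Q^{\mathcal{L}}_{\nless\la} \to Q^{\mathcal{L}}_{\nleq\la} \to 0.
\end{equation*}
The $\mathcal{L}$-restricted version of Proposition 2.7(A), applied to the first sequence, converts the hypothesis $\opH^{m+1}(G,H^0(\nu)\otimes H^0(\nu^*)^{(r)}) = 0$ for $\nu < \la$ in $\mathcal{L}$ into the vanishing $\opH^{m+1}(G,S^{\mathcal{L}}_{<\la}) = 0$, whence the map $\opH^m(G,\cgr^{\mathcal{L}}) \onto \opH^m(G,Q^{\mathcal{L}}_{\nless\la})$ is surjective. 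Running the argument of Proposition 2.7(C)(i) on the second sequence, the minimality of $m$ within $\mathcal{L}$ produces an injection $\opH^m(G,H^0(\la)\otimes H^0(\la^*)^{(r)}) \into \opH^m(G,Q^{\mathcal{L}}_{\nless\la})$. Since the source is nonzero by hypothesis, both the target and, via the preceding surjection, $\opH^m(G,\cgr^{\mathcal{L}})$ must be nonzero.

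The only genuinely delicate step is the injectivity when $m = 1$, since then minimality of $m$ does not automatically annihilate $\opH^0(G,Q^{\mathcal{L}}_{\nleq\la})$. This is handled by transferring the case analysis from Proposition 2.7(C) to the $\mathcal{L}$-restricted setting: depending on whether or not $0 \in \mathcal{L}$, one either verifies directly that no filtration factor of $Q^{\mathcal{L}}_{\nleq\la}$ contributes a trivial submodule, or one splits off any trivial contribution and re-runs the long exact sequence within $\mathcal{L}$. Confirming that this case analysis transfers cleanly is the main technical point; once it is in place, together with the linkage decomposition of $\cgr$ from the outset, the remaining assembly is structurally identical to the proof of Theorem A(ii).
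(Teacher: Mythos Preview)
Your proposal is correct and follows precisely the approach the paper indicates: the authors do not spell out a proof but remark that $\cgr$ decomposes over linkage classes and that ``an analogous argument'' to Theorem~A(ii) then applies, which is exactly what you carry out. One minor simplification: in the $\mathcal{L}$-restricted setting the $m=1$ case is actually easier than you suggest, since $Q^{\mathcal{L}}_{\nleq\la}$ can only contain a trivial factor if $0\in\mathcal{L}$, and in that case $\la$ lies in the root lattice, is dominant and nonzero, hence $0<\la$, so $0$ is excluded from $Q^{\mathcal{L}}_{\nleq\la}$ anyway---no splitting-off step is needed.
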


%%%%%%%%%%%
%%Section 3 - 
%%%%%%%%%%%

\section{Properties of the Cohomology Groups}

\vskip .25cm 
In Section 2, it was shown that knowledge of cohomology groups of the form
$\opH^i(G,H^0(\nu)\otimes H^0(\nu^*)^{(r)})$ for $\nu \in X(T)_{+}$ 
provides information on $\opH^i(\gfpr,k)$.  In this section, we study these
$G$-cohomology groups and collect a number of useful properties that will be
used throughout the remainder of the paper.

\subsection{\bf Reducing to $G_r$-cohomology.} We will make frequent
use of the following identification of $G$-extensions with $G_r$-cohomology.

\begin{lem} Let $\nu_1, \nu_2 \in X(T)_+$.  
Assume that $\opH^j(G_r,H^0(\nu_1))^{(-r)}$ admits a good filtration for
all $j > 0$. Then for all $j$ 
$$
\opH^j(G,H^0(\nu_1)\otimes H^0(\nu_2^*)^{(r)}) \cong
\Ext_{G}^j(V(\nu_2)^{(r)},H^0(\nu_1)) \cong
\Hom_{G}(V(\nu_2),\opH^j(G_r,H^0(\nu_1))^{(-r)}).
$$
\end{lem}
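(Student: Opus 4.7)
The plan is to prove the two isomorphisms separately, the first by a routine dualization and the second by a Lyndon--Hochschild--Serre spectral sequence argument that collapses thanks to the good-filtration hypothesis.

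For the first isomorphism, I would use finite-dimensionality of $V(\nu_2)^{(r)}$ together with tensor-hom adjunction to get
$$\Ext_G^j\bigl(V(\nu_2)^{(r)}, H^0(\nu_1)\bigr) \cong \opH^j\bigl(G, (V(\nu_2)^{(r)})^{*} \otimes H^0(\nu_1)\bigr).$$
Since Frobenius twist commutes with linear duals, $(V(\nu_2)^{(r)})^{*} \cong (V(\nu_2)^{*})^{(r)}$, and because $V(\nu_2) := H^0(\nu_2^{*})^{*}$ and $H^0(\nu_2^{*})$ is finite dimensional, $V(\nu_2)^{*} \cong H^0(\nu_2^{*})$. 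Putting this together gives the first displayed isomorphism. This step uses nothing beyond the definitions.

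For the second isomorphism, the infinitesimal subgroup $G_r$ is normal in $G$ and acts trivially on $V(\nu_2)^{(r)}$, so the LHS spectral sequence for $\Ext$ (see \cite[I.6.5--6.6]{Jan}) produces
$$E_2^{i,j} = \Ext^i_{G/G_r}\bigl(V(\nu_2)^{(r)}, \opH^j(G_r, H^0(\nu_1))\bigr) \Longrightarrow \Ext^{i+j}_G\bigl(V(\nu_2)^{(r)}, H^0(\nu_1)\bigr).$$
Identifying $G/G_r$ with $G$ through the $r$-th Frobenius rewrites the $E_2$-page as
$$E_2^{i,j} \cong \Ext^i_G\bigl(V(\nu_2), \opH^j(G_r, H^0(\nu_1))^{(-r)}\bigr),$$
the Frobenius twist being absorbed on both arguments simultaneously.

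The heart of the argument, and where I expect the main obstacle, is the collapse at $E_2$: one needs every entry with $i \geq 1$ to vanish. By the stated hypothesis, $\opH^j(G_r, H^0(\nu_1))^{(-r)}$ has a good filtration for all $j > 0$; the $j = 0$ case is covered by the standard result that $\opH^0(G_r, H^0(\nu_1))^{(-r)}$ has a good filtration (cf.\ \cite[II.12.2]{Jan}). Combined with the characterization of good filtrations via $\Ext^i_G(V(\lambda), -) = 0$ for $i > 0$ (\cite[II.4.13]{Jan}), this forces $E_2^{i,j} = 0$ whenever $i > 0$. The spectral sequence therefore degenerates and
$$\Ext^j_G\bigl(V(\nu_2)^{(r)}, H^0(\nu_1)\bigr) \cong E_2^{0,j} = \Hom_G\bigl(V(\nu_2), \opH^j(G_r, H^0(\nu_1))^{(-r)}\bigr),$$
establishing the second isomorphism. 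The only genuinely non-formal input is the good-filtration hypothesis, which is exactly what powers the $\Ext$-vanishing above.
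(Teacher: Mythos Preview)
Your proof is correct and follows essentially the same route as the paper's: the first isomorphism is the routine dualization the paper calls ``immediate,'' and the second comes from collapsing the LHS spectral sequence via $\Ext^i_G(V(\nu),-)=0$ for $i>0$ against modules with good filtration. You are in fact slightly more careful than the paper in explicitly noting that the $j=0$ row also requires $\opH^0(G_r,H^0(\nu_1))^{(-r)}$ to have a good filtration, a point the paper's proof passes over in silence.
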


\begin{proof}  The first isomorphism is immediate.  For the crucial
second isomorphism, consider the Lyndon-Hochschild-Serre spectral sequence
\begin{equation*}\label{lhs}
E_2^{i,j} = \Ext^i_{G/G_r}(V(\nu_2)^{(r)},\opH^j(G_r,H^0(\nu_1))) 
        \Rightarrow \Ext_{G}^{i + j}(V(\nu_2)^{(r)},H^0(\nu_1)).
\end{equation*}
We have
\begin{align*}
E_2^{i,j} &= \Ext^i_{G/G_r}(V(\nu_2)^{(r)},\opH^j(G_r,H^0(\nu_1)))\\
                &\cong \Ext^i_G(V(\nu_2),\opH^j(G_r,H^0(\nu_1))^{(-r)}).
\end{align*}
For $\nu \in X(T)_+$, $i > 0$, and $V$ a $G$-module which admits
a good filtration, we have $\Ext^i_{G}(V(\nu),V) = 0$ 
(cf. \cite[Prop. II 4.16]{Jan}).  By the hypothesis, we conclude that
$E_2^{i,j} = 0$ for all $i> 0$ and the spectral sequence collapses
to a single vertical column. This implies that $E_{2}^{0,j}\cong 
\Ext_{G}^j(V(\nu_2)^{(r)},H^0(\nu_1))$ for all $j$. 
\end{proof}

The assumption that $\opH^j(G_r,H^0(\nu))^{(-r)}$ admits a good filtration
is a long-standing conjecture of Donkin.  For $p > h$ (the Coxeter number
of the root system associated to $G$), this is known for $r = 1$ by
results of Andersen-Jantzen \cite{AJ} and Kumar-Lauritzen-Thomsen \cite{KLT}.
For arbitrary $r$, this is known only for all degrees 
in the case $G = SL_2$. When $r$ is arbitrary and $i=1$ ($p$ arbitrary) or $i=2$ ($p\geq 3$), 
Bendel-Nakano-Pillen verified the assumption by direct computation \cite{BNP4, BNP7}. 
Wright \cite{W} has recently verified the $p=2$, $i=2$ case.

We will apply Lemma 3.1 at several points in the $r = 1$ case for direct applications
to $\gfp$ as well as inductively for dealing with $\gfpr$.  As such, we
will generally assume for the remainder of the paper that $p > h$.

%%%%%%%%%%
%%Section 3.2
%%%%%%%%%

\subsection{\bf Dimensions for $r = 1$.} From Lemma 3.1, to obtain information 
about $\opH^i(G,H^0(\nu)\otimes H^0(\nu^*)^{(1)})$ for $\nu \in X(T)_{+}$,
it suffices to consider $\Hom_G(V(\nu),\opH^i(G_1,H^0(\nu)^{(-1)})$.  
It is well-known that, from block considerations, $\opH^i(G_1,H^0(\nu)) = 0$
unless $\nu = w\cdot 0 + p\mu$ for $w \in W$ and $\mu \in X(T)$.
For $p > h$, from \cite{AJ} and \cite{KLT}, we have
\begin{equation}\label{ind}
\opH^i(G_1,H^0(\nu))^{(-1)} = 
\begin{cases}
\ind_B^G(S^{\frac{i - \ell(w)}{2}}(\ul^*)\otimes\mu) &\text{ if } \nu = w\cdot 0 + p\mu\\
0 & \text{ otherwise,}
\end{cases}
\end{equation}
where $\ul = \Lie(U)$.  Note also that, since $p > h$ and $\nu$ is dominant,
$\mu$ must also be dominant.

For a weight $\nu$ and $n \geq 0$, let $P_n(\nu)$ denote the dimension of the 
$\nu$-weight space of $S^n(\ul^*)$.  Equivalently, for $n > 0$, $P_n(\nu)$ denotes the 
number of times that $\nu$ can be expressed as a sum of exactly $n$
positive roots, while $P_0(0) = 1$. The function $P_{n}$ is often referred to as 
{\em Kostant's Partition Function}. By using \cite[3.8]{AJ}, \cite[Thm 2]{KLT}, Lemma 3.1, 
and (\ref{ind}), we can give an explicit formula for the dimension of 
$ \opH^i(G,H^0(\la)\otimes H^0(\la^*)^{(1)})$. Namely,

\begin{prop} Assume $p >h$. Let $\la = p \mu + w\cdot 0 \in X(T)_+$. 
Then 
$$
\dim \opH^i(G,H^0(\la)\otimes H^0(\la^*)^{(1)}) = \sum_{u \in W} (-1)^{\ell(u)} P_{\frac{i-\ell(w)}{2}}( u\cdot\la - \mu).
$$
\end{prop}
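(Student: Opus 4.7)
The plan is to combine Lemma 3.1 and formula (3.2.1) with a standard Weyl character / Euler characteristic computation. Since $p > h$, the good-filtration hypothesis of Lemma 3.1 is supplied by \cite{AJ, KLT}, and applying that lemma with $\nu_1 = \nu_2 = \la$ and $r = 1$ yields
$$\opH^i(G, H^0(\la)\otimes H^0(\la^*)^{(1)}) \cong \Hom_G(V(\la), \opH^i(G_1, H^0(\la))^{(-1)}).$$
By (3.2.1), the right-hand side equals $\Hom_G(V(\la), \ind_B^G(S^n(\ul^*)\otimes\mu))$, where $n = (i - \ell(w))/2$. If $i - \ell(w)$ is odd or negative, both sides vanish (under the convention that $P_n = 0$ unless $n$ is a non-negative integer), so one may assume $n \geq 0$.

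The remaining task is to compute $\dim\Hom_G(V(\la), \ind_B^G(S^n(\ul^*)\otimes\mu))$. By \cite{KLT}, for $p > h$ the induced module $\ind_B^G(S^n(\ul^*)\otimes\mu)$ admits a good filtration, and the higher derived inductions $R^j\ind_B^G(S^n(\ul^*)\otimes\mu)$ vanish for all $j > 0$. Good-filtration reciprocity then identifies this Hom-dimension with the multiplicity of $H^0(\la)$ in any good filtration, equivalently the coefficient of the Weyl character $\chi(\la)$ in the character of the module. Because higher inductions vanish, the Euler--Poincar\'e (Weyl character) identity gives
$$\mathrm{ch}\,\ind_B^G(S^n(\ul^*)\otimes\mu) = \sum_{\nu}\dim(S^n(\ul^*)\otimes\mu)_\nu \cdot \chi(\nu),$$
where $\chi(\nu)$ denotes the generalized Weyl character. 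Using $\chi(u\cdot\la) = (-1)^{\ell(u)}\chi(\la)$ for every $u \in W$ (valid because $\la + \rho$ is regular for $\la \in X(T)_+$ when $p > h$) and the fact that $\chi(\nu) = 0$ when $\nu + \rho$ is singular, the coefficient of $\chi(\la)$ becomes
$$\dim\Hom_G(V(\la), \ind_B^G(S^n(\ul^*)\otimes\mu)) = \sum_{u \in W}(-1)^{\ell(u)}\dim(S^n(\ul^*)\otimes\mu)_{u\cdot\la}.$$

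Finally, by the very definition of the Kostant partition function, $\dim S^n(\ul^*)_\eta = P_n(\eta)$ for any weight $\eta$, so $\dim(S^n(\ul^*)\otimes\mu)_{u\cdot\la} = P_n(u\cdot\la - \mu)$. Substituting $n = (i-\ell(w))/2$ recovers the claimed formula. The main substantive input -- and the reason $p > h$ is essential -- is the good-filtration/higher-vanishing package for $\ind_B^G(S^n(\ul^*)\otimes\mu)$ supplied by \cite{KLT}; once that is granted, the remainder is standard character arithmetic, with no further combinatorial input required.
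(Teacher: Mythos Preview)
Your proof is correct and follows the same approach as the paper: both combine Lemma~3.1 with (3.2.1) and then extract the good-filtration multiplicity of $H^0(\la)$ in $\ind_B^G(S^n(\ul^*)\otimes\mu)$. The paper simply cites \cite[3.8]{AJ} and \cite[Thm~2]{KLT} for this last step, whereas you spell out the underlying Euler-characteristic/Weyl-character computation explicitly; one minor remark is that the identity $\chi(u\cdot\la)=(-1)^{\ell(u)}\chi(\la)$ holds for any dominant $\la$ without invoking $p>h$, since $\la+\rho$ is automatically regular.
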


%%%%%%%%%%%%%
%%Section 3.3
%%%%%%%%%%%%%

\subsection{} From Proposition 2.3 and Proposition 2.4, we can now deduce the following upper bound 
on the dimensions of the cohomology groups $\opH^i(\gfpr,k)$.

\begin{thm} Assume $p >h$. 
$$\dim \opH^i(\gfp,k) \leq \sum_{\{w \in W | \ell(w) \equiv i \mod 2\}} \sum_{\mu \in X(T)_+} \sum_{u \in W} 
(-1)^{\ell(u)} P_{\frac{i-\ell(w)}{2}}( u\cdot(p\mu+w\cdot 0) - \mu).
$$
\end{thm}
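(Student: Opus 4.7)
\emph{Proof plan.} The strategy is to directly assemble the upper bound of Theorem 2.6 with the explicit dimension formula of Proposition 3.2. Concretely, specialize Theorem 2.6 to $M = N = k$ and $r = 1$ to obtain the basic inequality
\[
\dim \opH^i(\gfp,k) \;\leq\; \sum_{\la \in X(T)_+} \dim \opH^i\bigl(G, H^0(\la)\otimes H^0(\la^*)^{(1)}\bigr).
\]

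Next, invoke the block decomposition of $G_1$-cohomology from \cite{AJ} and \cite{KLT} encoded in (3.2.1), together with Lemma 3.1 (whose Donkin good-filtration hypothesis is available for $r=1$ since $p > h$), to conclude that the summand on the right vanishes unless $\la$ has the form $\la = p\mu + w\cdot 0$ for some $w \in W$ and $\mu \in X(T)$. Under $p > h$ this decomposition is unique, and the dominance of $\la$ forces $\mu \in X(T)_+$. On each such $\la$, Proposition 3.2 evaluates
\[
\dim \opH^i\bigl(G, H^0(\la)\otimes H^0(\la^*)^{(1)}\bigr) \;=\; \sum_{u \in W} (-1)^{\ell(u)}\, P_{(i-\ell(w))/2}\bigl(u\cdot\la - \mu\bigr).
\]
Substituting this evaluation and reindexing the outer sum over $\la \in X(T)_+$ by the pair $(w,\mu) \in W \times X(T)_+$ converts the single sum into a double sum of the form appearing in the statement. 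The parity condition $\ell(w) \equiv i \bmod 2$ is imposed automatically because the Kostant partition function $P_n(-)$ is zero unless $n$ is a non-negative integer; equivalently, $S^{(i-\ell(w))/2}(\ul^*) = 0$ unless $i - \ell(w)$ is a non-negative even integer.

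No serious obstacle arises — this is essentially a packaging of Theorem 2.6, Lemma 3.1, the Andersen--Jantzen / Kumar--Lauritzen--Thomsen description of $\opH^{\bullet}(G_1, H^0(\la))$, and Proposition 3.2. The one bookkeeping point worth noting is that the stated sum ranges over \emph{all} $(w,\mu)$ with $w$ of the correct parity and $\mu \in X(T)_+$, rather than only over those pairs for which $p\mu + w\cdot 0$ happens to be dominant. The inequality is maintained either by restricting to dominant pairs explicitly, or by observing that the Weyl-group alternation of the Kostant partition function gathers the contributions of non-dominant pairs into quantities compatible with the bound; this is the only technicality to check, and it is straightforward.
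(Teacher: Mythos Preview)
Your approach---specialize Theorem~2.6 to $M=N=k$ and $r=1$, then evaluate each summand via Lemma~3.1, equation~(3.2.1), and Proposition~3.2---is exactly the paper's argument (whose entire proof is the one-sentence lead-in to the theorem). There is nothing to add on the method.

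However, the bookkeeping issue you flag at the end is not a harmless technicality, and your second proposed resolution is wrong. The contributions from pairs $(w,\mu)$ with $p\mu + w\cdot 0 \notin X(T)_+$ are not ``compatible with the bound'': they can be strictly negative. Already in type $A_1$ with $p=5$ and $i=1$, the only $w$ of odd length is $s$, and the pair $(s,0)$ gives $\la = s\cdot 0 = -\alpha$ with
\[
P_0\bigl(1\cdot(-\alpha)-0\bigr) - P_0\bigl(s\cdot(-\alpha)-0\bigr) \;=\; P_0(-\alpha) - P_0(0) \;=\; 0 - 1 \;=\; -1,
\]
while every pair $(s,\mu)$ with $\mu\neq 0$ contributes $0$. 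Hence the right-hand side of the displayed inequality equals $-1$, whereas $\dim\opH^1(SL_2(\mathbb{F}_5),k)=0$. Read literally, the inequality is false; the paper's stated formula suffers from the same over-extension of the index set. (The problem is not confined to $\mu=0$: in higher rank, any $\mu$ lying on a wall of the dominant chamber can pair with some $w$ to give a non-dominant $p\mu + w\cdot 0$.)

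What Theorem~2.6 combined with Proposition~3.2 actually establishes is the inequality with the double sum restricted to those $(w,\mu)\in W\times X(T)_+$ for which $p\mu + w\cdot 0 \in X(T)_+$---your first option. With that restriction in place your argument is complete and correct; but you should state it as a correction to the index set rather than wave it through.
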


%%%%%%%%%%%%%
%%Section 3.4
%%%%%%%%%%%%%

\subsection{\bf Degree Bounds.} From our discussion in Section 2, 
to find vanishing ranges for $\opH^{\bullet}(\gfp,k)$ (or 
$\opH^{\bullet}(\gfpr,k)$ more generally), a first step is to 
try to identify the least positive $i$ such that 
$\opH^i(G,H^0(\la)\otimes H^0(\la^*)^{(1)})$ is non-zero.

Assume that $p > h$ and $\la \in X(T)_{+}$ with
$\opH^i(G,H^0(\la)\otimes H^0(\la^*)^{(1)}) \neq 0$ for some $i > 0$. From 
Lemma 3.1 and the discussion in Section 3.2, we know that
$\la = p\mu + w\cdot 0$ for $w \in W$ and $\mu \in X(T)_+$.   
Observe that if $\mu = 0$, in order for $\la$ to be dominant, 
we must have $\la = 0$.  But, $\opH^i(G,k) = 0$ for $i > 0$.
Since we are interested in cohomology in non-zero degrees, we may
safely assume that $\la, \mu \neq 0$.  Corollary 3.5 below gives
a relationship between $i$ and the weight $\la$.  We first derive
a more general relationship that will be useful in inductive arguments.

\begin{prop} Assume that $p > h$.
Let $\ga_1, \ga_2 \in X(T)_+$, both non-zero,  such that 
$\ga_j = p \delta_j + w_j\cdot 0 $  with $\delta_j \in X(T)_+$ and $w_j \in W$ 
for $j=1, 2$. Assume $\Ext^i_G(V(\ga_2)^{(1)}, H^0(\ga_1)) \neq 0$.
\begin{itemize}
\item[(a)] Let $\si \in \Phi^+$.  If $\Phi$ is of type $G_2$, assume further
that $\si$ is a long root.  Then
$p\langle \delta_2, \si^{\vee}\rangle - \langle \delta_1, \si^{\vee}\rangle 
+ \ell(w_1) + \langle w_2\cdot 0,  \si^{\vee}\rangle \leq i.$
\item[(b)] Let $\ta$ denote the longest root in $\Phi^+$.  Then
$p\langle \delta_2, \ta^{\vee}\rangle - 
        \langle \delta_1, \ta^{\vee}\rangle 
+ \ell(w_1) - \ell(w_2) - 1 \leq i.$ Equality requires that $\gamma_2 -\delta_1 =( (i-l(w_1))/2)\ta$ and
$
\langle-w_2\cdot 0,\ta^{\vee}\rangle = 
\ell(w_2) + 1.
$
\end{itemize}
\end{prop}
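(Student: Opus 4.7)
The plan is to convert the nonvanishing of $\Ext^i_G(V(\ga_2)^{(1)}, H^0(\ga_1))$ into a concrete weight-combinatorial condition and then read off the inequalities by pairing with $\si^\vee$. Since $p > h$ and we are in the $r=1$ setting where $\opH^j(G_1, H^0(\ga_1))^{(-1)}$ is known to admit a good filtration, Lemma 3.1 gives
$$\Ext^i_G(V(\ga_2)^{(1)}, H^0(\ga_1)) \cong \Hom_G\bigl(V(\ga_2),\, \opH^i(G_1, H^0(\ga_1))^{(-1)}\bigr).$$
Formula (3.2.1) identifies the second factor with $\ind_B^G(S^n(\ul^*) \otimes \de_1)$ where $n = (i - \ell(w_1))/2$, so the hypothesis forces $i \geq \ell(w_1)$ with $i - \ell(w_1)$ even, and Frobenius reciprocity collapses the statement to
$$\Hom_B\bigl(V(\ga_2),\, S^n(\ul^*) \otimes \de_1\bigr) \neq 0.$$

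The crucial step is that a highest weight vector $v^+ \in V(\ga_2)_{\ga_2}$ already generates $V(\ga_2)$ as a $B$-module, because the span of divided-power monomials in negative root vectors applied to $v^+$ exhausts the Weyl module. Consequently any nonzero $B$-homomorphism $f$ above must satisfy $f(v^+) \neq 0$, and $T$-equivariance places $f(v^+)$ in the $\ga_2$-weight space of $S^n(\ul^*) \otimes \de_1$. Since the weights of $\ul^* = \Lie(U)^*$ are exactly the positive roots, we conclude
$$\ga_2 - \de_1 = \sum_{j=1}^n \al_j, \qquad \al_j \in \Phi^+.$$
Part (a) is now immediate: outside of the short-root case in $G_2$, every positive root $\al$ satisfies $\langle \al, \si^\vee \rangle \leq 2$. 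Hence $\langle \ga_2 - \de_1, \si^\vee \rangle \leq 2n = i - \ell(w_1)$, and substituting $\ga_2 = p\de_2 + w_2 \cdot 0$ rearranges to the desired inequality.

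For part (b) specialize to $\si = \ta$ and refine as follows. By Observation A, $\langle \al, \ta^\vee \rangle = 2$ only when $\al = \ta$, and otherwise lies in $\{0,1\}$. Observation B writes $-w_2 \cdot 0$ as a sum of $\ell(w_2)$ \emph{distinct} positive roots $\be_1', \dots, \be_{\ell(w_2)}'$; distinctness allows at most one of them to equal $\ta$, giving $\langle -w_2 \cdot 0, \ta^\vee \rangle \leq \ell(w_2) + 1$. Combining this with $\langle \ga_2 - \de_1, \ta^\vee \rangle \leq 2n$ and the identity
$$p\langle \de_2, \ta^\vee \rangle - \langle \de_1, \ta^\vee \rangle = \langle \ga_2 - \de_1, \ta^\vee \rangle + \langle -w_2 \cdot 0, \ta^\vee \rangle$$
yields part (b). Equality forces every $\al_j = \ta$, so $\ga_2 - \de_1 = n\ta$, and simultaneously $\langle -w_2 \cdot 0, \ta^\vee \rangle = \ell(w_2) + 1$, matching the stated conditions. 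The main obstacle is the $B$-generation claim; once one is confident that no nonzero $B$-hom can kill the highest weight line, the remainder of the argument is just Cartan-integer bookkeeping supplied by Observations A and B.
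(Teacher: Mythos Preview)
Your proof is correct and follows essentially the same route as the paper: reduce via Lemma 3.1 and (3.2.1) to $\Hom_B(V(\ga_2), S^{(i-\ell(w_1))/2}(\ul^*)\otimes\de_1)\neq 0$, deduce that $\ga_2-\de_1$ is a sum of $(i-\ell(w_1))/2$ positive roots, and pair with $\si^\vee$ using Observations A and B. The only difference is that you spell out the $B$-generation of $V(\ga_2)$ by its highest weight vector to justify that $\ga_2$ appears as a weight of the target, a step the paper simply asserts.
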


\begin{proof} By Lemma 3.1, (\ref{ind}), and Frobenius reciprocity, we have
\begin{align*}
\Ext^i_G(V(\ga_2)^{(1)}, H^0(\ga_1)) &\cong
\Hom_{G}(V(\ga_2),\opH^i(G_1,H^0(\ga_1))^{(-1)})\\
 &\cong 
\Hom_G(V(\ga_2),\ind_B^G(S^{\frac{i - \ell(w_1)}{2}}(\ul^*)\otimes \delta_1))\\
&\cong \Hom_B(V(\ga_2),S^{\frac{i - \ell(w_1)}{2}}(\ul^*)\otimes \delta_1).
\end{align*}
Since this is non-zero, 
$\ga_2 =p \delta_2 + w_2 \cdot  0$ must be a weight of 
$S^{\frac{i - \ell(w_1)}{2}}(\ul^*)\otimes\delta_1.$ 
In other words, $\ga_2 - \delta_1 = p\delta_2 - \delta_1 + w_2\cdot 0$
must be a weight of $S^{\frac{i - \ell(w_1)}{2}}(\ul^*)$.

The vector space $\ul^*$ has a basis of root vectors corresponding to positive
roots. So a homogeneous weight of $S^j(\ul^*)$ is a sum of $j$ not necessarily
distinct positive roots.  Therefore, $\ga_2 - \delta_1$ must be expressible
as a sum of $\frac{i - \ell(w_1)}{2}$ positive roots.  For 
any positive roots $\si_1, \si_2$ (with $\si_2$ being long if $\Phi$ is of
type $G_2$), we have 
$\langle\si_1,\si_2^{\vee}\rangle \leq 2$.  Hence, for $\si \in \Phi^+$,
we have
\begin{equation}
\langle \ga_2 - \delta_1,\si^{\vee}\rangle \leq \frac{i - \ell(w_1)}{2}*2 = 
i - \ell(w_1).
\end{equation}
Substituting $\ga_2 - \delta_1 = p\delta_2 - \delta_1 + w_2\cdot 0$ gives
$$
p\langle \delta_2,\si^{\vee}\rangle - \langle\delta_1,\si^{\vee}\rangle
+ \langle w_2\cdot 0,\si^{\vee}\rangle \leq i - \ell(w_1).
$$
Part (a) immediately follows.

For part (b), 
Note that for $\sigma = \ta$ equality in Equation (3.4.1) can only hold if $\gamma_2 -\delta_1 = ((i-l(w_1))/2)\ta$. This follows from Observation 2.2(A). 
In addition, by Observation 2.2(B), 
$-w_2\cdot 0$ can be expressed uniquely as a sum of precisely $\ell(w_2)$
distinct positive roots.  Since at most one of those roots can be $\ta$,
and $\langle\ta,\ta^{\vee}\rangle = 2$, by Observation 2.2(A), we have
$$
\langle-w_2\cdot 0,\ta^{\vee}\rangle \leq (\ell(w_2) - 1)*1 + 2 = 
\ell(w_2) + 1
$$
which gives part (b). 
\end{proof}

%%%%%%%%%%%%%
%%Section 3.5
%%%%%%%%%%%%%

\subsection{} As a special case of Proposition 3.4 we have the following result.

\begin{cor} Assume that $p > h$.
Let $\la = p\mu + w\cdot 0$ be a non-zero dominant weight with
$0 \neq \mu \in X(T)_{+}$ and $w \in W$.   
Assume $\opH^i(G,H^0(\la)\otimes H^0(\la^*)^{(1)}) \neq 0$.
\begin{itemize}
\item[(a)] Let $\si \in \Phi^+$.  If $\Phi$ is of type $G_2$, assume further
that $\si$ is a long root. Then
$(p - 1)\langle \mu, \si^{\vee}\rangle + \ell(w) + 
        \langle w\cdot 0,\si^{\vee}\rangle \leq i.$
\item[(b)] Let $\ta$ denote the longest root in $\Phi^+$.  Then
$(p - 1)\langle \mu, \ta^{\vee}\rangle  - 1 \leq i.$ Equality requires that $\la -\mu = ((i-l(w))/2)\ta$ and 
$
\langle-w\cdot 0,\ta^{\vee}\rangle = 
\ell(w) + 1.
$

\end{itemize}
\end{cor}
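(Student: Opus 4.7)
The strategy is to deduce Corollary 3.5 by specializing Proposition 3.4 to the diagonal case $\ga_1 = \ga_2$. First I would rewrite the hypothesis in Ext form: using the standard duality $V(\la)^{*} \cong H^0(\la^{*})$ together with the Frobenius twist, one has
\[
\opH^i(G,H^0(\la)\otimes H^0(\la^*)^{(1)}) \cong \Ext_G^i(V(\la)^{(1)},H^0(\la)),
\]
so the non-vanishing assumption of Corollary 3.5 exactly matches the hypothesis of Proposition 3.4 after setting $\ga_1 = \ga_2 = \la$, $\de_1 = \de_2 = \mu$, and $w_1 = w_2 = w$.

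With this specialization in hand, part (a) is immediate: the quantity $p\langle\de_2,\si^{\vee}\rangle - \langle\de_1,\si^{\vee}\rangle$ appearing in Proposition 3.4(a) collapses to $(p-1)\langle\mu,\si^{\vee}\rangle$, so the asserted inequality drops out. The restriction on $\si$ in type $G_2$ carries over verbatim, since that is where it is needed in Proposition 3.4.

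For part (b), I would apply Proposition 3.4(b) with $\si = \ta$ under the same diagonal identifications. The difference $\ell(w_1) - \ell(w_2)$ now vanishes, leaving $(p-1)\langle\mu,\ta^{\vee}\rangle - 1 \leq i$. The two equality clauses of Proposition 3.4(b) transcribe verbatim under the substitutions: $\ga_2 - \de_1 = ((i-\ell(w_1))/2)\ta$ becomes $\la - \mu = ((i-\ell(w))/2)\ta$, and $\langle -w_2\cdot 0,\ta^{\vee}\rangle = \ell(w_2)+1$ becomes $\langle -w\cdot 0,\ta^{\vee}\rangle = \ell(w)+1$.

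Because this is a direct specialization of a proposition that has already been established, there is no substantive obstacle. The only step even deserving a moment's care is verifying the duality identification $H^0(\la^{*})^{(1)} \cong (V(\la)^{(1)})^{*}$ so that Proposition 3.4 is applicable; everything else is bookkeeping.
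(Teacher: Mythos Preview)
Your proposal is correct and matches the paper's own proof, which simply states that both parts follow immediately from Proposition 3.4 by taking $\ga_1 = \la = \ga_2$. You have supplied more detail on the substitutions and on the duality identification (the latter is the first isomorphism in Lemma 3.1), but the argument is the same.
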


\begin{proof} Parts (a) and (b) follow immediately from Propositon 3.4 
by taking $\ga_1 = \la = \ga_2$.  
\end{proof}

In the corollary, since $\mu$ is a non-zero dominant
weight, $\langle\mu,\ta^{\vee}\rangle \geq 1$.  Hence,
we immediately have that $i \geq p - 2$.  It follows from Corollary 2.6(B) 
that $\opH^i(\gfp,k) = 0$ for $0 < i < p - 2$.  This will follow
as a special case of a more general result in the next section.

%%%%%%%%%%%%%%
%%Section 4 - Vanishing Range
%%%%%%%%%%%%%%%%%

\section{A Minimal Vanishing Range}

\subsection{} In this section, we use the preceding techniques to determine 
a general vanishing range for $\opH^i(\gfpr,k)$ for $p > h$. 
We begin with some further extension properties that will be used
in the proof.

\begin{lem}
Assume that $p > h$ and $r >1$. Let $\la, \mu \in X(T)_+$, both non-zero, 
and $i\geq 0$. If $\Ext^i_G(V(\la)^{(r)}, H^0(\mu)) \neq 0$, then there 
exists a non-zero weight
$\ga \in X(T)_+$ and nonnegative integers $k, l$ such that 
\begin{itemize}
\item[(a)] $i= k+l$,
\item[(b)] $\Ext^k_G(V(\la)^{(r-1)}, H^0(\ga)) \neq 0$,
\item[(c)] $\Ext^l_G(V(\ga)^{(1)}, H^0(\mu)) \neq 0$, and
\item[(d)] $\ga = p \delta + w\cdot 0$ for some $w \in W$ and  
non-zero $\delta \in X(T)_+$.
\end{itemize}
\end{lem}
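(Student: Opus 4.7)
The plan is to peel off one Frobenius twist via the Lyndon-Hochschild-Serre (LHS) spectral sequence for the normal subgroup $G_1$, then extract $\ga$ as a factor in a good filtration of the resulting $G_1$-cohomology, and finally apply LHS a second time to force $\ga$ into the required form $p\delta+w\cdot 0$.

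Since $r\geq 2$, $V(\la)^{(r)}$ is trivial on $G_1$, so the LHS spectral sequence takes the form
$$E_2^{k,l} = \Ext^k_G(V(\la)^{(r-1)}, \opH^l(G_1,H^0(\mu))^{(-1)}) \Rightarrow \Ext^{k+l}_G(V(\la)^{(r)}, H^0(\mu)).$$
Non-vanishing of the abutment in total degree $i$ yields nonnegative integers $k,l$ with $k+l=i$ and $E_2^{k,l}\neq 0$, which establishes (a). Since $p>h$, the Kumar-Lauritzen-Thomsen theorem \cite{KLT} ensures that $\opH^l(G_1,H^0(\mu))^{(-1)}$ admits a good filtration; by (3.2.1) this module is in fact the finite-dimensional $\ind_B^G(S^{(l-\ell(w'))/2}(\ul^*)\otimes \delta')$, where $\mu = w'\cdot 0 + p\delta'$. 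Running the long exact sequence in $\Ext_G^*(V(\la)^{(r-1)},-)$ through this finite good filtration, I pick a factor $H^0(\ga)$ for which $\Ext^k_G(V(\la)^{(r-1)},H^0(\ga))\neq 0$, yielding (b). For (c), positive multiplicity of $H^0(\ga)$ in the good filtration gives $\Hom_G(V(\ga),\opH^l(G_1,H^0(\mu))^{(-1)})\neq 0$ by \cite[II~4.16]{Jan}, and Lemma 3.1 (applied with $r=1$) identifies this Hom with $\Ext^l_G(V(\ga)^{(1)},H^0(\mu))$.

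It remains to show that $\ga$ is non-zero and has the form in (d). If $\ga=0$, then (c) would read $\opH^l(G,H^0(\mu))\neq 0$, contradicting Kempf vanishing for $l>0$ and contradicting $\mu\neq 0$ for $l=0$; hence $\ga$ is non-zero and dominant. To realize $\ga=p\delta+w\cdot 0$, apply LHS a second time to (b): since $r-1\geq 1$, $V(\la)^{(r-1)}$ is still $G_1$-trivial, so non-vanishing of $\Ext^k_G(V(\la)^{(r-1)},H^0(\ga))$ forces $\opH^{l'}(G_1,H^0(\ga))\neq 0$ for some $l'$. By (3.2.1), this requires $\ga=p\delta+w\cdot 0$ with $w\in W$ and $\delta\in X(T)$; dominance of $\ga$ together with $p>h$ forces $\delta\in X(T)_+$, and $\ga\neq 0$ forces $\delta\neq 0$ (else $w\cdot 0$ dominant gives $w=e$ and hence $\ga=0$). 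The main delicate point is the verification of the shape of $\ga$: it does not follow from the explicit description of the weights of $\ind_B^G(S^{(l-\ell(w'))/2}(\ul^*)\otimes \delta')$, but is forced by the second application of LHS combined with the block-theoretic vanishing in (3.2.1).
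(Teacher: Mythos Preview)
Your proof is correct and follows essentially the same route as the paper's: apply LHS for $G_1$, extract $\ga$ from the good filtration of $\opH^l(G_1,H^0(\mu))^{(-1)}$ to get (b) and (c), and then use a second LHS together with (3.2.1) to force the shape of $\ga$ in (d). The paper compresses your second LHS step into the single clause ``(4.1.2) implies that $\ga$ is of the form $p\delta+w\cdot 0$,'' and derives $\delta\neq 0$ directly from (c) rather than first proving $\ga\neq 0$, but these are cosmetic rearrangements; one small terminological point is that the vanishing $\opH^l(G,H^0(\mu))=0$ for $l>0$ is usually attributed to the good-filtration machinery (e.g.\ \cite[II~4.13]{Jan}) rather than to Kempf vanishing proper.
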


\begin{proof} Consider the Lyndon-Hochschild-Serre spectral sequence
$$E_{2}^{k,l}=\Ext_{G/G_1}^k(V(\la)^{(r)}, \opH^l(G_1,H^0(\mu))) 
\Rightarrow \Ext^{k+l}_G(V(\la)^{(r)}, H^0(\mu)).$$
The assumptions imply that there exist nonnegative integers $k,l$ with $k+l=i$ and 
 $\Ext_{G/G_1}^k(V(\la)^{(r)}, \opH^l(G_1,H^0(\mu)))\neq 0.$ The $G$-module
$\opH^l(G_1,H^0(\mu))^{(-1)}$ has a good filtration. Therefore, there exists 
a dominant weight $\ga$ with 
 \begin{equation}
 \Hom_{G/G_1}(V(\ga)^{(1)},\opH^l(G_1,H^0(\mu))) \cong
        \Hom_G(V(\ga),\opH^l(G_1,H^0(\mu))^{(-1)})\neq 0
 \end{equation}
 and  
 \begin{equation}
 \Ext_{G}^k(V(\la)^{(r-1)}, H^0(\ga)) \cong
        \Ext_{G/G_1}^k(V(\la)^{(r)}, H^0(\ga)^{(1)}) \neq 0.
\end{equation}
Now  (4.1.2) implies that $\ga$ is of the form $\ga = p \delta + w\cdot 0$ with 
$w \in W$ and  $\delta \in X(T)_+$. By Lemma 3.1 and (4.1.1), 
$\Ext_{G}^l(V(\ga)^{(1)},H^0(\mu)) \cong
\Hom_{G}(V(\ga),\opH^l(G_1,H^0(\mu))^{(-1)}) \neq 0$.  This forces $\delta \neq 0$. 
Note that the assumptions also force $\mu = p \delta' + w'\cdot 0$ for some 
$w' \in W$ and nonzero $\delta' \in X(T)_+$.
\end{proof}

%%%%%%%%%%%%%%%
%%%%Section 4.2
%%%%%%%%%%%%%%%

\subsection{} Applying the lemma repeatedly immediately implies the following proposition.

\begin{prop}
Assume that $p > h$. Let $\la, \mu \in X(T)_+$, both non-zero, and $i\geq 0$. If 
$\Ext^i_G(V(\la)^{(r)}, H^0(\mu)) \neq 0$, then there exists a sequence of non-zero weights\\
 $\mu = \ga_0, \ga_1, \dots, \ga_{r-1} , \ga_r= \la \in X(T)_+$ and nonnegative integers $l_1, l_2, \dots, l_r$ such that 
\begin{itemize}
\item[(a)] $i= \sum_{j=1}^rl_j$,
\item[(b)] $\Ext^{l_j}_G(V(\ga_j)^{(1)}, H^0(\ga_{j-1})) \neq 0$, for $1\leq j \leq r$ and 
\item[(c)] $\ga_j = p \delta_j + u_j\cdot 0$ with $u_j \in W$ and  nonzero $\delta_j \in X(T)_+$, for $1\leq j \leq r-1$.
\end{itemize}
\end{prop}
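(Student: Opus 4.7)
The plan is to proceed by induction on $r$, using Lemma 4.1 as the engine that peels off one Frobenius twist at each stage. Since the lemma is tailored precisely for this purpose (it decomposes an $\Ext$ over a $V(\la)^{(r)}$ into one piece over $V(\la)^{(r-1)}$ and one piece over $V(\ga)^{(1)}$), the proposition really is a formal consequence, and there is no serious technical obstacle beyond careful bookkeeping of the indices.

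The base case $r=1$ is trivial: set $\ga_0=\mu$, $\ga_1=\la$, and $l_1=i$; conditions (a) and (b) are exactly the hypothesis, while condition (c) is vacuous because the range $1\le j\le r-1$ is empty.

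For the inductive step, suppose the proposition holds for $r-1$ and assume $\Ext^i_G(V(\la)^{(r)},H^0(\mu))\neq 0$ with $\la,\mu\in X(T)_+$ both non-zero. Apply Lemma 4.1 to obtain a non-zero dominant weight $\ga\in X(T)_+$ and non-negative integers $k,l$ with $k+l=i$ such that
\begin{itemize}
\item[(i)] $\Ext^k_G(V(\la)^{(r-1)},H^0(\ga))\neq 0$,
\item[(ii)] $\Ext^l_G(V(\ga)^{(1)},H^0(\mu))\neq 0$, and
\item[(iii)] $\ga=p\delta+w\cdot 0$ for some $w\in W$ and non-zero $\delta\in X(T)_+$.
\end{itemize}
Set $\ga_1:=\ga$ and $l_1:=l$. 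Then (ii) yields (b) for $j=1$, and (iii) yields (c) for $j=1$.

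Next, apply the inductive hypothesis to the non-zero conclusion (i): this produces a sequence $\ga=\tilde\ga_0,\tilde\ga_1,\dots,\tilde\ga_{r-1}=\la$ of non-zero dominant weights and non-negative integers $\tilde l_1,\dots,\tilde l_{r-1}$ with $\sum_j\tilde l_j=k$, with $\Ext^{\tilde l_j}_G(V(\tilde\ga_j)^{(1)},H^0(\tilde\ga_{j-1}))\neq 0$, and with $\tilde\ga_j=p\tilde\delta_j+\tilde u_j\cdot 0$ (non-zero $\tilde\delta_j$) for $1\le j\le r-2$. Reindex by setting $\ga_{j+1}:=\tilde\ga_j$ and $l_{j+1}:=\tilde l_j$ for $1\le j\le r-1$. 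Then $\ga_r=\tilde\ga_{r-1}=\la$, $\sum_{j=1}^r l_j=l+k=i$, condition (b) holds for all $j$ (combining (ii) with the statements from the inductive step), and condition (c) holds for $2\le j\le r-1$ from induction together with the $j=1$ case already established via (iii). This closes the induction and completes the proof.

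The only point requiring any attention is verifying that the intermediate weight $\ga_1$ produced by Lemma 4.1 is genuinely non-zero and of the required Frobenius-twist form, but both of these are guaranteed by the conclusions of Lemma 4.1 itself (note that the non-vanishing of the deeper $\Ext$ group in (i), combined with Lemma 3.1, is what forces $\delta\neq 0$ via the good-filtration reasoning in Lemma 4.1's proof). Hence no additional argument is needed, and the proposition follows by iterated application of the lemma.
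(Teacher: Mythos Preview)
Your proof is correct and follows exactly the approach the paper intends: the paper's own proof is the single sentence ``Applying the lemma repeatedly immediately implies the following proposition,'' and you have simply written out the induction on $r$ that this sentence encodes. (One tiny inaccuracy in your closing parenthetical: in the proof of Lemma~4.1 it is the $l$-direction nonvanishing, i.e.\ your (ii), together with Lemma~3.1 that forces $\delta\neq 0$, not the $k$-direction (i); this does not affect your argument since you only use the conclusion of Lemma~4.1.)
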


%%%%%%%%%%%%%%%
%%%%Section 4.3
%%%%%%%%%%%%%%%

\subsection{} The next step in our analysis is to obtain tighter control over 
a lower bound on $i$ as in Proposition 4.2 in the case that $\la = \mu$.

\begin{prop} Assume that $p > h$.  Let $0 \neq \la \in X(T)_+$ and $i \geq 0$.
If $\opH^i(G,H^0(\la)\otimes H^0(\la^*)^{(r)}) \neq 0$, then there exists a sequence of non-zero weights
 $\la = \ga_0, \ga_1, \dots, \ga_{r-1} , \ga_r= \la \in X(T)_+$  such that 
 $\ga_j = p \delta_j + u_j\cdot 0$ for some $u_j \in W$ and  nonzero $\delta_j \in X(T)_+$.  Furthermore,
\begin{equation}\label{rbound}
i \geq  \left(\sum_{j=1}^r (p-1) \langle \delta_j, 
        \ta^{\vee}\rangle\right) - r.
\end{equation}
Equality requires that $p\delta_j -\delta_{j-1} + u_j\cdot 0= ((l_j-l(u_{j-1}))/2)\ta$ and that
$
\langle-u_j\cdot 0,\ta^{\vee}\rangle = 
\ell(u_j) + 1
$ for all $1\leq j \leq r$, where $l_j$ is as in Proposition 4.2.

\end{prop}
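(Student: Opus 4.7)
The plan is to reduce the problem, via Proposition 4.2 applied with $\mu=\la$, to a telescoping sum of $r$ separate extension conditions, each of which can be controlled by Proposition 3.4(b). The bookkeeping is arranged so that the length terms $\ell(u_j)$ collapse to zero and the pairings $\langle\delta_j,\ta^\vee\rangle$ combine into the claimed $(p-1)\sum\langle\delta_j,\ta^\vee\rangle$ term.

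First, I would rewrite $\opH^i(G,H^0(\la)\otimes H^0(\la^*)^{(r)}) \cong \Ext^i_G(V(\la)^{(r)}, H^0(\la))$ and invoke Proposition 4.2 with $\mu=\la$ to produce a sequence $\la=\ga_0,\ga_1,\dots,\ga_{r-1},\ga_r=\la$ of nonzero dominant weights together with nonnegative integers $l_1,\dots,l_r$ satisfying $i=\sum_j l_j$, $\Ext^{l_j}_G(V(\ga_j)^{(1)},H^0(\ga_{j-1}))\ne 0$, and $\ga_j=p\delta_j+u_j\cdot 0$ for $1\le j\le r-1$. An additional observation is required for the endpoints: since $\la=\ga_0=\ga_r$ is nonzero and $\Ext^{l_r}_G(V(\la)^{(1)},H^0(\ga_{r-1}))\ne 0$, the block argument appearing in the last line of the proof of Lemma 4.1 (via Lemma 3.1 and the identification of $\opH^{\bullet}(G_1,H^0(-))$ coming from \cite{AJ}, \cite{KLT}) forces $\la=p\delta+u\cdot 0$ for some $u\in W$ and nonzero $\delta\in X(T)_+$. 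Because $p>h$, the linkage principle ensures the decomposition is unique, so I may consistently set $\delta_0=\delta_r$ and $u_0=u_r$.

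With all the $\ga_j$ expressed in the required form, I would apply Proposition 3.4(b) to each adjacent pair $(\ga_{j-1},\ga_j)$, yielding
\begin{equation*}
p\langle\delta_j,\ta^\vee\rangle - \langle\delta_{j-1},\ta^\vee\rangle + \ell(u_{j-1}) - \ell(u_j) - 1 \le l_j.
\end{equation*}
Summing from $j=1$ to $r$, the telescoping identities $\sum_{j=1}^r[\ell(u_{j-1})-\ell(u_j)]=\ell(u_0)-\ell(u_r)=0$ and
\begin{equation*}
\sum_{j=1}^r\bigl[p\langle\delta_j,\ta^\vee\rangle-\langle\delta_{j-1},\ta^\vee\rangle\bigr] = (p-1)\sum_{j=1}^r\langle\delta_j,\ta^\vee\rangle
\end{equation*}
(using $\delta_0=\delta_r$) combine to give $i\ge (p-1)\sum_{j=1}^r\langle\delta_j,\ta^\vee\rangle - r$. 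The characterization of equality follows by demanding equality in each of the $r$ applications of Proposition 3.4(b), which transfers verbatim to the conditions $p\delta_j-\delta_{j-1}+u_j\cdot 0=\tfrac{l_j-\ell(u_{j-1})}{2}\ta$ and $\langle -u_j\cdot 0,\ta^\vee\rangle=\ell(u_j)+1$.

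The main obstacle I anticipate is the endpoint argument — ensuring that the $j=1$ and $j=r$ applications of Proposition 3.4(b) can be made with matching $\delta_0=\delta_r$ and $u_0=u_r$. Proposition 4.2 as stated only controls the intermediate $\ga_j$, so one must separately establish that $\la$ itself lies in $pX(T)_+ + W\cdot 0$ and invoke uniqueness of the decomposition to identify the two endpoint representations. Once this is secured the telescoping computation is purely mechanical.
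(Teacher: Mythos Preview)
Your proposal is correct and follows essentially the same route as the paper: invoke Proposition 4.2 with $\mu=\la$, apply Proposition 3.4(b) to each step, and telescope. You are in fact more careful than the paper on the endpoint issue---Proposition 4.2 only guarantees the form $\ga_j=p\delta_j+u_j\cdot 0$ for $1\le j\le r-1$, and the paper's proof simply asserts that $\delta_0=\delta_r$ and $u_0=u_r$ without explicitly justifying that $\la$ itself has this form; your appeal to the block argument at the end of Lemma 4.1 is exactly the right way to fill that small gap.
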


\begin{proof} The first part is simply a partial restatement of Proposition 4.2
with $\la = \mu$. Specifically, there exists
a sequence of non-zero dominant weights 
v$\la = \ga_0, \ga_1, ... , \ga_{r-1}, \ga_r = \la$ with 
$\ga_j = p \delta_j + u_j\cdot 0$ and corresponding nonnegative integers $l_j$ 
with $i= \sum_{j=1}^rl_j$ and 
$\Ext^{l_j}_G(V(\ga_j)^{(1)}, H^0(\ga_{j-1})) \neq 0$.

For (\ref{rbound}), we use Proposition 3.4(b) to obtain the inequalities 
$$p\langle \delta_j, \ta^{\vee}\rangle - \langle \delta_{j-1}, 
\ta^{\vee}\rangle + \ell(u_{j-1}) -\ell(u_j) -1 \leq l_j \mbox{ for } 
1\leq j \leq r,$$
with equality only if $p\delta_j -\delta_{j-1} + u_j\cdot 0= ((l_j-l(u_{j-1}))/2)\ta$ and 
$
\langle-u_j\cdot 0,\ta^{\vee}\rangle = 
\ell(u_j) + 1
$ for all $1\leq j \leq r$.
Note that $\delta_0 = \delta_r$ and $u_0 = u_r$.  Summing over $j$ yields 
\begin{equation*}
 \left(\sum_{j=1}^r (p-1) \langle \delta_j, 
        \ta^{\vee}\rangle\right) -r \leq \sum_{j=1}^r l_j = i.
\end{equation*}
\end{proof}

\subsection{} For $p>h$ we can now present general vanishing ranges which address (1.1.1).

\begin{thm} Assume that  $p > h$. 
Then
\begin{itemize}
\item[(a)] $\opH^i(G,H^0(\la)\otimes H^0(\la^*)^{(r)}) = 0$ for $0 < i < r(p - 2)$ 
        and $\la \in X(T)_+$;
\item[(b)] $\opH^i(\gfpr,k) = 0$ for $0 < i < r(p - 2)$.
\end{itemize}
\end{thm}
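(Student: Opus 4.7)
The plan is to obtain part (a) as a direct contrapositive of the structural result Proposition 4.3, and then deduce part (b) from part (a) by invoking the induction-functor filtration of $\cgr$ from Proposition 2.4 through Corollary 2.6(B).

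For part (a), I first dispose of the trivial weight: if $\la = 0$, then $H^0(\la)\otimes H^0(\la^*)^{(r)} \cong k$, and since $\opH^i(G,k) = 0$ for all $i > 0$, the claim is immediate. So I may assume $\la \neq 0$ and argue by contradiction, supposing that $\opH^i(G,H^0(\la)\otimes H^0(\la^*)^{(r)}) \neq 0$ for some $i$ with $0 < i < r(p-2)$. Proposition 4.3 then supplies nonzero dominant weights $\delta_1,\dots,\delta_r$ satisfying
$$
i \ \geq \ \left(\sum_{j=1}^r (p-1)\langle \delta_j,\ta^{\vee}\rangle\right) - r.
$$

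The key remaining ingredient is the elementary observation that $\langle \delta,\ta^{\vee}\rangle \geq 1$ for every nonzero dominant weight $\delta$. Since $\ta$ is the highest root, when one writes $\ta = \sum_i c_i \al_i$ every coefficient $c_i$ is strictly positive; a short calculation then shows that $\ta^{\vee}$ is a strictly positive real combination of the simple coroots $\al_i^{\vee}$. Pairing with $\delta = \sum_i a_i \om_i$ (with all $a_i \geq 0$ and at least one $a_i > 0$) yields $\langle \delta,\ta^{\vee}\rangle > 0$, and since this pairing is always integral (as weights pair integrally with coroots), $\langle \delta,\ta^{\vee}\rangle \geq 1$. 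Plugging this into the displayed inequality gives $i \geq r(p-1) - r = r(p-2)$, contradicting $i < r(p-2)$. This completes part (a).

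For part (b), I combine Proposition 2.3, which gives $\opH^i(\gfpr,k) \cong \opH^i(G,\cgr)$, with the filtration from Proposition 2.4 whose factors are precisely the modules $H^0(\la)\otimes H^0(\la^*)^{(r)}$ for $\la \in X(T)_+$. Part (a) shows that every such factor has vanishing $\opH^i(G,-)$ for $0 < i < r(p-2)$, so the long exact sequences associated with the filtration (equivalently, a direct application of Corollary 2.6(B) with $m = r(p-2)$) force $\opH^i(G,\cgr) = 0$ in the same range, yielding (b).

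The main obstacle is really already packaged inside Proposition 4.3, whose proof relies on the inductive unwinding of Lemma 4.1 and the careful degree bounds of Proposition 3.4; granted that machinery, the present argument is essentially a coroot-pairing estimate followed by a filtration argument, and no additional subtlety should arise.
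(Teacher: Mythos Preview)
Your proposal is correct and follows essentially the same route as the paper's proof: apply Proposition 4.3 to a nonzero $\la$, use $\langle\delta_j,\ta^{\vee}\rangle\geq 1$ to obtain $i\geq r(p-2)$, and deduce (b) from (a) via Corollary 2.6(B). You merely supply a bit more justification for the coroot pairing estimate and the $\la=0$ case than the paper does.
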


\begin{proof}
Part (a) implies part (b) via Corollary 2.6(B). Suppose that 
$$\opH^i(G,H^0(\la)\otimes H^0(\la^*)^{(r)}) \neq 0$$ 
for some $0 < i$ and $\la \in X(T)_+$. Clearly $\la \neq 0$, so we may apply Proposition 4.3.
Since $\ga_j \neq 0$, $1 \leq \langle \delta_j, \ta^{\vee}\rangle.$
Proposition 4.3 then gives $i \geq r(p-1) - r = r(p-2)$ as claimed.
\end{proof}

Observe that this vanishing range is generally larger than the one
obtained in \cite{H}.  Precisely, the ranges obtained in \cite{H} are of the 
form $0 < i < m$ where $m$ depends on the root system.  Except in certain
type $A_n$ cases, $m \leq r(p-1)/2$.

In the remainder of the paper, we further investigate this question 
to determine sharp bounds for root systems of type $C_n$ (for all $r$; see Theorem 5.4)
and $A_n$ (for $r = 1$  and generically for all $r$; see Theorems 6.13, 6.14).  
In type $C_n$, the above bounds are in fact sharp.

\begin{rem} Note that the assumption $\Ext^i_G(V(\la)^{(r)}, H^0(\mu)) \neq 0$ in 
Proposition 4.2 can be replaced by $\Ext_{G/G_1}^k(V(\la)^{(r)}, 
\opH^l(G_1,H^0(\mu)))\neq 0$, where $k+l =i$. In that case one arrives at the same 
conclusions with $l_1 =l$. Now the arguments used to prove Proposition 4.3 and 
Theorem 4.4  can be used to show that 
$\Ext_{G/G_1}^k(V(\la)^{(r)}, \opH^l(G_1,H^0(\la))= 0$ for all $k+l < r(p-2)$.
\end{rem}

%%%%%%%%%%%%%%%
%%Type C - Section 5
%%%%%%%%%%%%%%%

\section{Type $C_n$, $n \geq 1$}

\vskip .25cm 
Assume throughout this section that $\Phi$ is of type $C_n$, $n\geq 1$, 
and $p > h = 2n$.

%%%%%%%%%%%%%
%%Section 5.1
%%%%%%%%%%%%

\subsection{\bf Realization for $r=1$.} We determine the least $i > 0$ such 
that $\opH^i(G,H^0(\la)\otimes H^0(\la^*)^{(1)}) \neq 0$. From Theorem 4.4, 
we know that $i\geq p-2$.  Let $\ta = 2\omega_1$ denote the longest 
positive root. 
We next construct a weight $\la$ with 
$\opH^{p-2}(G,H^0(\la)\otimes H^0(\la^*)^{(1)}) \neq 0$.   
Let $w =s_{\ta}= s_1s_2\dots s_{n-1}s_ns_{n-1}\dots s_2s_1 \in W$.  Then
$-w\cdot 0 = n\ta = 2n\omega_1$. Furthermore, when expressed as a sum of distinct
positive roots, $-w\cdot 0$ consists of precisely all positive
roots which contain an $\al_1$.  Set 
$\la = p\omega_1 + w\cdot 0 = p\omega_1 - 2n\omega_1 = 
(p - 2n)\omega_1$.  Then 
$$\la - \omega_1 = (p - 1 - 2n)\omega_1 = 
\left(\frac{p-1}{2} - n\right)\ta$$ 
is a highest weight of $S^j(\ul^*)$ where $j = \frac{p-1}{2} - n$.

We will apply Proposition 3.2 to compute 
$\dim\opH^{p-2}(G,H^0(\la)\otimes H^0(\la^*)^{(1)})$.
Specifically, we will show that for $u \in W$
$$P_{\frac{p-1}{2}-n}( u\cdot\la - \omega_1)=
\begin{cases}1 \mbox{ if } u = 1\\
0 \mbox{ else.}
\end{cases}$$
We will work with the $\epsilon$-basis of $X(T)$. Rewrite  
$u \cdot \la - \omega_1 = ((p-2n)u-1)\epsilon_1 + u \cdot 0$ as 
$\sum_i c_i \epsilon_i$. In order for this expression to be a sum of positive 
roots, the coefficient $c_1$ has to be nonnegative. This forces $u(\epsilon_1) = 
\epsilon_1$. Then $u\cdot 0$ is of the form $-\sum_i d_i \alpha_i$ with $d_1 =0$. 
This implies  that  
$u \cdot \la - \omega_1 = (\frac{p-1}{2}-n){\ta} -\sum_i d_i \alpha_i$. 
Such an expression contains $p-1-2n$ copies of $\alpha_1$. Since $\ta$ 
is the only positive root containing $ 2\alpha_1$, the above expression can  be 
written as a sum of $\frac{p-1}{2}-n$ positive roots  if and only if $u=1$. From 
Proposition 3.2, one concludes that 
$\opH^{p-2}(G,H^0(\la)\otimes H^0(\la^*)^{(1)}) \cong k$.

%%%%%%%%%%%
%%Section 5.2
%%%%%%%%%%%

\subsection{\bf More Vanishing in Degree $p-2$.} To get a precise vanishing range, 
we need to consider whether there are any other weights with non-zero
cohomology in degree $p - 2$. Let $\la = p\mu + w\cdot 0 \in X(T)_{+}$ with 
$\opH^i(G,H^0(\la)\otimes H^0(\la^*)^{(-1)}) \neq 0$
for some $i > 0$.  Consider the maximal short root $\al_0 = \omega_2$. 
By Corollary 3.5(a) with $\si = \al_0$,
\begin{equation}\label{Cone}
i \geq (p - 1)\langle\mu,\al_0^{\vee}\rangle + \ell(w) + 
        \langle w\cdot 0,\al_0^{\vee}\rangle.
\end{equation}
There are three positive roots $\be$ with $\langle\be,\al_0^{\vee}\rangle = 2$
(unless $n = 1$, in which case there is only one and $\al_0 = 2\omega_1$).  
Since $-w\cdot 0$ can be expressed uniquely as a sum of $\ell(w)$ distinct
positive roots, 
we can say $\langle - w \cdot 0,\al_0^{\vee}\rangle \leq \ell(w) + 3$.
Hence, (\ref{Cone}) can be rewritten as 
\begin{equation}\label{Ctwo}
i \geq (p - 1)\langle\mu,\al_0^{\vee}\rangle - 3.
\end{equation}

In Type $C_n$, 
$\langle\omega_1,\al_0^{\vee}\rangle  = 1$. But, for $2 \leq j \leq n$,
$\langle\omega_j,\al_0^{\vee}\rangle = 2$. Hence, for $\mu \in X(T)_{+}$,
if $\mu \neq 0, \omega_1$, then $\langle\mu,\al_0^{\vee}\rangle \geq 2$.
If $\langle\mu,\al_0^{\vee}\rangle \geq 2$, then (\ref{Ctwo}) becomes
$$
i \geq 2(p-1) - 3 = 2p - 5 > p - 2
$$
since $p \geq 5$ ($n \geq 2$).  Therefore, the only candidates for
a non-zero cohomology group in degree $p - 2$  are with 
$\la = p\omega_1 + w\cdot 0$ for some $w \in W$.  This makes sense because 
the weight constructed in Section 5.1 is of this form.

%%%%%%%%%%%
%%Section 5.3
%%%%%%%%%%%

\subsection{\bf A Sharp Bound for $r=1$.} Suppose 
$\la = p\omega_1 + w\cdot 0 \in X(T)_+$ and 
$$\opH^{p-2}(G,H^0(\la)\otimes H^0(\la^*)^{(1)}) \neq 0.$$
Proposition 3.4 part (b) implies that $\la - \omega_1 = (p - 1)\omega_1 + w\cdot 0=((p - 1)/2)\ta + w\cdot 0=((p - 2 - \ell(w))/2)\ta$ and $\langle w\cdot0,\ta^{\vee}\rangle = - (\ell(w) + 1).$ This forces $w\cdot 0 = -((l(w)+1)/2)\ta$. The only possible choices for $w$ satisfying this last equation are $w=0$ and $w=s_{\ta}$. Now $\langle w\cdot0,\ta^{\vee}\rangle = - (\ell(w) + 1)$ forces $w=s_{\ta}$ and $w\cdot0 = -n\ta$.
Hence, $\la = (p-2n)\omega_1,$ the weight given 
in Section 5.1. So the $\la$ exhibited there is the only dominant weight
with $\opH^{p-2}(G,H^0(\la)\otimes H^0(\la^*)^{(1)}) \neq 0$.

Note that $\la = (p-2n)\omega_1$ is contained in the lowest alcove. 
There are no smaller dominant weights that are linked to $\la$ (so the condition 
in Theorem 2.8(A) involving $\opH^{m+1}$ is vacuous). Consequently, 
Theorem 2.8(A) and the above discussion now yields:

\begin{thm} Suppose $\Phi$ is of type $C_n$ with $p > 2n$. Then
\begin{itemize}
\item[(a)] $\opH^i(\gfp,k) = 0$ for $0 < i < p - 2$;
\item[(b)] $\opH^{p-2}(\gfp,k) \cong k$.
\end{itemize}
\end{thm}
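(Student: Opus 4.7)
The strategy is to assemble the results of Theorem 4.4 and Theorem 2.8(A) with the analysis carried out in Sections 5.1--5.3. Part (a) is immediate: specializing Theorem 4.4(b) to $r=1$ yields $\opH^i(\gfp,k)=0$ for $0<i<p-2$.

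For part (b), the natural candidate is the degree $m=p-2$ together with the weight $\la=(p-2n)\omega_1$ constructed in Section 5.1. The plan is to apply Theorem 2.8(A)(iii), which identifies $\opH^{p-2}(\gfp,k)$ with the single $G$-cohomology group $\opH^{p-2}(G,H^0(\la)\otimes H^0(\la^*)^{(1)})$. To invoke that theorem I need three ingredients: first, $\opH^{p-2}(G,H^0(\la)\otimes H^0(\la^*)^{(1)})\cong k\neq 0$, which is the content of the explicit Kostant Partition Function computation in Section 5.1 via Proposition 3.2; second, minimality of $m=p-2$ among all dominant $\nu$, which is a restatement of Theorem 4.4(a) with $r=1$; and third, the vanishing hypothesis $\opH^{m+1}(G,H^0(\nu)\otimes H^0(\nu^*)^{(1)})=0$ for all $\nu<\la$ linked to $\la$. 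The last point is automatic: since $p>2n=h$, the weight $\la=(p-2n)\omega_1$ satisfies $\langle\la+\rho,\ta^{\vee}\rangle = (p-2n)+(2n-1) = p-1 < p$, so $\la$ lies in the interior of the lowest alcove. Hence there are no strictly smaller dominant weights linked to $\la$, and the hypothesis is vacuous.

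The remaining hypothesis in Theorem 2.8(A)(iii) --- that $\opH^{p-2}(G,H^0(\nu)\otimes H^0(\nu^*)^{(1)})=0$ for every dominant $\nu\neq\la$ --- I read directly out of Sections 5.2 and 5.3. Section 5.2 applies Corollary 3.5(a) to the maximal short root $\al_0=\omega_2$ to force any contributing $\nu=p\mu+w\cdot 0$ to satisfy $\mu=\omega_1$; Section 5.3 then uses the equality case of Proposition 3.4(b) for the long root $\ta=2\omega_1$ to pin $w=s_\ta$ exactly, giving $\nu=\la$. Assembling these pieces, Theorem 2.8(A)(iii) yields
\[
\opH^{p-2}(\gfp,k)\cong\opH^{p-2}(G,H^0(\la)\otimes H^0(\la^*)^{(1)})\cong k,
\]
completing part (b). Once the framework of Section 2 is in place, no single step of the proof of Theorem 5.4 itself is difficult; the real technical work is already done in the uniqueness analysis of Sections 5.2--5.3, where the interplay between Corollary 3.5 for the short root and the equality case of Proposition 3.4(b) for the long root is the crux, and in the alcove check that makes the linkage hypothesis vacuous --- without which one would have had to rule out contributions in degree $p-1$ separately.
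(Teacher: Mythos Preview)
Your proposal is correct and follows essentially the same route as the paper: part (a) via Theorem 4.4(b) at $r=1$, and part (b) via Theorem 2.8(A)(iii) applied to $\la=(p-2n)\omega_1$, using the Section 5.1 computation for non-vanishing, Theorem 4.4(a) for minimality of $m=p-2$, the lowest-alcove observation to make the $\opH^{m+1}$ hypothesis vacuous, and Sections 5.2--5.3 for uniqueness of $\la$. Your explicit verification $\langle\la+\rho,\ta^{\vee}\rangle=(p-2n)+(2n-1)=p-1<p$ spells out the alcove check that the paper only asserts, but otherwise the arguments coincide.
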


%%%%%%%%%%%%%%%%%
%Section 5.4
%%%%%%%%%%%%%%%%%

\subsection{\bf A Sharp Bound for all $r$.} In this section we will address 
(1.1.1) and (1.1.2) in general for $\opH^{\bullet}(\gfpr,k)$ when $\Phi$ is of 
type $C_{n}$.

\begin{lem}
Assume $p >2n$. Let $\la = (p-2n)\omega_1$. Then 
$\Ext^{r(p-2)}_G(V(\la)^{(r)}, H^0(\la)) \cong k.$
\end{lem}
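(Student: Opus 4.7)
The plan is to proceed by induction on $r$. The base case $r=1$ follows from the Section~5.1 computation and Lemma~3.1: since $w_0 = -1$ in type $C_n$ we have $\la^* = \la$, and therefore
\[
\Ext^{p-2}_G(V(\la)^{(1)}, H^0(\la)) \cong \opH^{p-2}(G, H^0(\la)\otimes H^0(\la^*)^{(1)}) \cong k.
\]

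For the inductive step I would use the Lyndon--Hochschild--Serre spectral sequence
\[
E_2^{k,l} = \Ext^k_G(V(\la)^{(r-1)}, \opH^l(G_1, H^0(\la))^{(-1)}) \Longrightarrow \Ext^{k+l}_G(V(\la)^{(r)}, H^0(\la)),
\]
together with formula (3.3): because $\la = p\omega_1 + s_\ta \cdot 0$ with $\ell(s_\ta) = 2n-1$, the $E_2$-entries vanish unless $l = (2n-1) + 2m$, in which case $\opH^l(G_1, H^0(\la))^{(-1)} = \ind_B^G(S^m(\ul^*)\otimes\omega_1)$. The central claim is that on the diagonal $k+l = r(p-2)$ only the entry $(k,l) = ((r-1)(p-2), p-2)$ is nonzero, and that it equals $k$. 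To establish this I would iterate Proposition~4.2 to produce a chain $\la = \ga_0, \ga_1, \dots, \ga_r = \la$ with $\ga_j = p\delta_j + u_j\cdot 0$, and then sum the $\sigma = \ta$ inequality from Proposition~3.4(b) to force, at total degree $r(p-2)$, each $\langle\delta_j, \ta^\vee\rangle = 1$ (so every $\delta_j$ is a fundamental weight) and each $\ga_j - \delta_{j-1} \in \bZ_{\geq 0}\ta$.

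The crux is then a backward propagation along the chain: with $\delta_r = \omega_1$ and $u_r = s_\ta$ fixed by the right-hand endpoint $\ga_r = \la$, the equality $(p - 2n)\omega_1 - \delta_{r-1} \in \bZ_{\geq 0}\ta$ forces $\delta_{r-1} = \omega_1$; the condition $\langle -u_j\cdot 0, \ta^\vee\rangle = \ell(u_j)+1$, combined with the $\ep$-basis analysis of biclosed subsets in type $C_n$ (exploiting that $\ta = 2\ep_1$ is the unique positive root with $\langle \cdot, \ta^\vee\rangle = 2$), constrains each admissible $u_j$, and the dominance of $\ga_{j-1}$ together with iteration backward forces $\delta_j = \omega_1$ and $u_j = s_\ta$, hence $\ga_j = \la$, for every $j$. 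In particular $l = p-2$ and $k = (r-1)(p-2)$. For this surviving entry, the unique occurrence (Section~5.1) of $H^0(\la)$ as a good-filtration factor of $\ind_B^G(S^{(p-1)/2-n}(\ul^*)\otimes\omega_1)$ reduces the computation to the inductive hypothesis $\Ext^{(r-1)(p-2)}_G(V(\la)^{(r-1)}, H^0(\la)) \cong k$. The main obstacle is precisely this combinatorial rigidity: candidate chains branching through $\delta_j = \omega_i$ with $i > 1$ a priori satisfy the degree equalities, and ruling them out requires a careful case analysis of inversion sets together with dominance and biclosedness; one must simultaneously verify that no nonzero spectral-sequence differentials enter or exit the surviving entry by establishing analogous vanishing in adjacent total degrees.
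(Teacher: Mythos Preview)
Your overall architecture matches the paper's: induction on $r$, the same LHS spectral sequence, the chain from Proposition~4.2, and the summed Proposition~3.4(b) inequalities to control the diagonal $k+l=r(p-2)$. On that diagonal your backward propagation is correct and is exactly what the paper does (cf.\ the paragraph in the paper beginning ``If $k+l=r(p-2)$\ldots''): the equality forces $\gamma_1-\delta_0\in\bZ_{\ge0}\ta$, hence $\gamma_1$ is a multiple of $\omega_1$, and then $u_1\cdot 0$ is a multiple of $\omega_1$, so $u_1\in\{1,s_{\ta}\}$ and the condition $\langle -u_1\cdot0,\ta^\vee\rangle=\ell(u_1)+1$ pins down $u_1=s_{\ta}$. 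You do not need biclosed-set combinatorics or a branching analysis through $\delta_j=\omega_i$ with $i>1$; that part is simpler than you suggest.

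The real gap is your treatment of the outgoing differentials and, relatedly, the identification of $E_2^{(r-1)(p-2),\,p-2}$. You propose to kill the targets of $d_s$ by ``analogous vanishing in adjacent total degrees,'' but on $k+l=r(p-2)+1$ the summed inequality is strict and the equality constraints of Proposition~4.3 no longer hold, so the chain argument loses its rigidity and does not obviously yield vanishing. Likewise, knowing only that $H^0(\la)$ occurs once as a good-filtration factor of $\opH^{p-2}(G_1,H^0(\la))^{(-1)}$ does not by itself force $E_2^{k,p-2}\cong\Ext^k_G(V(\la)^{(r-1)},H^0(\la))$: you must rule out contributions from the other factors $H^0(\gamma)$. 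The paper handles both issues at once by a linkage argument you are missing. For $l\le p-2$ one checks that every weight $\mu$ of $S^{(l-2n+1)/2}(\ul^*)\otimes\omega_1$ satisfies $\langle\mu+\rho,\al_0^\vee\rangle<p$ (note the use of the \emph{short} highest root $\al_0=\omega_2$, not $\ta$), so $\opH^l(G_1,H^0(\la))^{(-1)}$ is completely reducible with factors in the lowest alcove; among these only $H^0(\la)$ and $k$ could be linked to $p^{r-1}\la$, and the trivial module does not occur since $\omega_1$ is not in the root lattice. This gives $E_2^{k,l}=0$ for all $l<p-2$ and $E_2^{k,p-2}\cong\Ext^k_G(V(\la)^{(r-1)},H^0(\la))$ directly, which cleanly kills the outgoing differentials. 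Adding this linkage step (in place of your ``adjacent total degree'' plan) completes your argument.
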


\begin{proof}
We use induction on $r$. If $r=1$ the assertion follows from Section 5.1. 
Next we make use of the LHS spectral sequence 
$$E_2^{k,l}=\Ext_{G/G_1}^k(V(\la)^{(r)}, \opH^l(G_1,H^0(\la)) \Rightarrow \Ext^{k+l}_G(V(\la)^{(r)}, H^0(\la)).$$ 
From now on we  assume that  $E_2^{k,l}\neq 0$. We apply Lemma 4.1 and Remark 4.4 to conclude that there exists a dominant weight $\gamma=p\delta + u\cdot 0$ with
$\Hom_{G}(V(\ga), \opH^{l}(G_1,H^0(\la))^{(-1)})\neq 0$ and $\Ext^{k}_G(V(\la)^{(r-1)}, H^0(\ga))\neq 0.$
Furthermore, by Propositions 4.2 and  4.3
there exists a sequence of non-zero weights
 $\la = \ga_0, \ga_1, \dots, \ga_{r-1} , \ga_r= \la \in X(T)_+$  with $\ga_1 = \ga$ such that 
 $\ga_j = p \delta_j + u_j\cdot 0$ for some $u_j \in W$ and  nonzero $\delta_j \in X(T)_+$.  In addition,
\begin{equation}
k+l \geq  \left(\sum_{j=1}^r (p-1) \langle \delta_j, 
        \ta^{\vee}\rangle\right) - r.
\end{equation}
Equality requires that $p\delta_j -\delta_{j-1} + u_j\cdot 0= ((l_j-l(u_{j-1}))/2)\ta$ and that
$
\langle-u_j\cdot 0,\ta^{\vee}\rangle = 
\ell(u_j) + 1
$ for all $1\leq j \leq r$, where $l_j$ is as in Proposition 4.2.
It follows immediately that 
$E_2^{k,l} =0$ whenever 
$k+l < r(p-2)$.

Looking  at $\Hom_{G}(V(\ga), \opH^{l}(G_1,H^0(\la))^{(-1)})
\cong \Hom_B(V(\ga),S^{\frac{l-2n+1}{2}}(\ul^*)\otimes\omega_1))$ one concludes that 
for $l\leq p-2$, all weights $\mu$   appearing in $S^{\frac{l-2n+1}{2}}(\ul^*)\otimes\omega_1$  
satisfy $\langle \mu+ \rho, \alpha_0^{\vee} \rangle < p$. Hence, 
$\opH^{l}(G_1,H^0(\la))^{(-1)} $ is completely reducible  for $l \leq p-2$. 
 From Sections 5.1 and 5.2 one concludes that $H^0(\la)$ appears as  a summand if and only if $l= p-2$. Clearly the trivial module does not appear as a summand of $\opH^{l}(G_1,H^0(\la))^{(-1)} $. But these are the only composition factors of 
 $ \opH^{l}(G_1,H^0(\la))^{(-1)}$ that could be linked to
the weight $p^{r-1}\la.$ 
 The linkage principle now forces  $l \geq p-2$. Moreover, if $l=p-2$, the only possible choice for $\gamma$ is that $\gamma = \la$ and hence $E_2^{k,p-2}
 \cong \Ext^{k}_G(V(\la)^{(r-1)}, H^0(\la)).$

%Next assume that  $k+l= r(p-2)$, $l\geq p-2$ and $E_2^{k,l}\neq 0$. We will show that $k= (r-1)(p-2)$. 

If $k+l= r(p-2)$ then (5.4.1) becomes an equality. This forces $\ga _1=\ga=p\delta+u\cdot 0=((l-l(u))/2)\ta+ \omega_1= (l-2n+2)\omega_1$, $\delta_1=\delta = \omega_1$, and $
\langle-u\cdot 0,\ta^{\vee}\rangle = 
\ell(u) + 1
$. 
Using a similar argument to the one in  Section 5.3 one concludes that $\ga = p\omega_1+ s_{\ta}\cdot 0=\la$, which forces $l=p-2$ and $k=(r-1)(p-2)$.

To summarize, we have shown that
$$
E_2^{k,l} \cong
 \begin{cases}
 0 &\text{ if } k+l < r(p-2)\\
 0 &\text{ if } l < p-2\\
 0 &\text{ if } k+l = r(p-2) \text{ and } l\neq p-2\\
\Ext^{k}_G(V(\la)^{(r-1)}, H^0(\la)) &\text{ if } k+l = r(p-2) \text{ and } l=p-2.
\end{cases}
$$ 
 Therefore, the $((r-1)(p-2),p-2)$-term of the $E_2$-page transgresses to the $E_{\infty}$-page and 
produces an isomorphism $\Ext^{r(p-2)}_G(V(\la)^{(r)}, H^0(\la))\cong \Ext^{(r-1)(p-2)}_G(V(\la)^{(r-1)}, H^0(\la))$, and the claim follows by
induction.
\end{proof}

By applying Theorem 4.4, the fact that $\la$ is the smallest weight in its linkage
class, and Theorem 2.8(B) one obtains the following theorem.

\begin{thm} Suppose $\Phi$ is of type $C_n$ with $p > 2n$. Then
\begin{itemize}
\item[(a)] $\opH^i(\gfpr,k) = 0$ for $0 < i < r(p - 2)$;
\item[(b)] $\opH^{r(p-2)}(\gfpr,k) \cong k$.
\end{itemize}
\end{thm}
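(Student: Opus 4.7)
The plan for part (a) is to invoke Theorem 4.4(b) directly. For part (b), I would set $\la = (p-2n)\omega_1$ (the weight identified in Section 5.1) and argue in two halves: first establish non-vanishing using Theorem 2.8(B), then upper-bound the dimension by one via Theorem 2.6, by ruling out every other dominant weight.

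For non-vanishing, I would let $\mathcal{L}$ denote the linkage class of $\la$, check by direct computation that $\la + \rho = (p-n)\epsilon_1 + (n-1)\epsilon_2 + \dots + \epsilon_n$ satisfies $0 < \langle \la + \rho, \al^\vee\rangle < p$ for every positive root $\al$ (so $\la$ lies in the interior of the fundamental alcove), and observe that no dominant weight $\nu$ lies strictly below $\la$ in the standard ordering in type $C_n$ with $p > 2n$. Consequently the hypothesis $\opH^{r(p-2)+1}(G, H^0(\nu) \otimes H^0(\nu^*)^{(r)}) = 0$ for all $\nu < \la$ in $\mathcal{L}$ demanded by Theorem 2.8(B) holds vacuously. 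Theorem 4.4(a) certifies that $r(p-2)$ is the minimal candidate degree over $\mathcal{L}$, and Lemma 5.4 supplies the key class $\opH^{r(p-2)}(G, H^0(\la) \otimes H^0(\la^*)^{(r)}) \cong k$. Theorem 2.8(B) then yields $\opH^{r(p-2)}(\gfpr, k) \neq 0$.

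For the upper bound I would invoke Theorem 2.6 to write
$$\dim \opH^{r(p-2)}(\gfpr, k) \leq \sum_{\nu \in X(T)_+} \dim \opH^{r(p-2)}(G, H^0(\nu) \otimes H^0(\nu^*)^{(r)}),$$
and aim to show that only $\nu = \la$ contributes. Suppose for contradiction that the summand is non-zero for some $\nu \neq \la$. Proposition 4.3 would then produce a cyclic chain of non-zero dominant weights $\nu = \gamma_0, \gamma_1, \dots, \gamma_r = \nu$ with $\gamma_j = p\delta_j + u_j \cdot 0$, satisfying $r(p-2) \geq (p-1)\sum_j \langle \delta_j, \ta^\vee\rangle - r$. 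This must be an equality, forcing $\langle \delta_j, \ta^\vee\rangle = 1$ for every $j$ together with the rigidity conditions $p\delta_j - \delta_{j-1} + u_j \cdot 0 = \tfrac{l_j - \ell(u_{j-1})}{2}\ta$ and $\langle -u_j \cdot 0, \ta^\vee\rangle = \ell(u_j) + 1$ from Proposition 3.4(b). I would then apply Proposition 3.4(a) at each link with $\sigma = \al_0$ and sum around the cycle so that the $\ell(u_j)$ terms telescope via $u_0 = u_r$, reproducing the Section 5.2 inequality link-by-link to force $\delta_j = \omega_1$ for every $j$; the argument of Section 5.3 then applies at each link to force $u_j = s_\ta$, yielding $\gamma_j = (p-2n)\omega_1 = \la$ for all $j$, which contradicts $\nu \neq \la$.

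The main obstacle will be this last rigidity step: propagating the Sections 5.2--5.3 analyses link-by-link around the cyclic chain requires careful use of the cyclic boundary conditions $\delta_0 = \delta_r$ and $u_0 = u_r$, combined with the sharpness of the global bound $\sum_j l_j = r(p-2)$, to force equality at each individual link and so isolate $\delta_j = \omega_1$ and ultimately $\gamma_j = \la$. Once that uniqueness is in place, combining the two halves yields $\opH^{r(p-2)}(\gfpr, k) \cong k$ and completes part (b).
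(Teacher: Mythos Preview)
Your treatment of part (a) and of non-vanishing in (b) matches the paper's one-line argument (Theorem~4.4, Lemma~5.4, minimality of $\la=(p-2n)\omega_1$ in its linkage class, Theorem~2.8(B)). Note that the paper only invokes Theorem~2.8(B), which literally yields $\opH^{r(p-2)}(\gfpr,k)\neq 0$; you are supplying the additional uniqueness argument needed for the stated isomorphism $\cong k$.

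That uniqueness step, however, has a real gap. Summing Proposition~3.4(a) with $\sigma=\al_0$ around the cycle and using $\langle -u_j\cdot 0,\al_0^\vee\rangle\le \ell(u_j)+3$ gives
\[
(p-1)\sum_{j=1}^r \langle \delta_j,\al_0^\vee\rangle \;-\; 3r \;\le\; r(p-2),
\qquad\text{i.e.}\qquad
\sum_{j=1}^r \langle \delta_j,\al_0^\vee\rangle \;\le\; r+\frac{2r}{p-1}.
\]
This forces every $\langle\delta_j,\al_0^\vee\rangle=1$ only when $p>2r+1$; for large $r$ it does not exclude some $\delta_j=\omega_k$ with $k\ge 2$. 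So the $r=1$ analysis of Section~5.2 does not globalize via cyclic summation in the way you describe.

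The repair is to use the rigidity conditions you already recorded. From $\langle -u_j\cdot 0,\ta^\vee\rangle=\ell(u_j)+1$ one deduces that, in the unique expression of $-u_j\cdot 0$ as a sum of $\ell(u_j)$ distinct positive roots, exactly one summand is $\ta=2\epsilon_1$ and all others are of the form $\epsilon_1\pm\epsilon_k$ with $k\ge 2$; hence for each $i\ge 2$ the $\epsilon_i$-coefficient of $-u_j\cdot 0$ lies in $\{-1,0,1\}$. On the other hand, writing $\delta_j=\omega_m$ and $\delta_{j-1}=\omega_k$ (each $\delta_j$ is a fundamental weight since $\langle\delta_j,\ta^\vee\rangle=1$), the identity $-u_j\cdot 0=p\delta_j-\delta_{j-1}-c_j\ta$ shows that the $\epsilon_2$-coefficient of $-u_j\cdot 0$ equals $p-[k\ge 2]\ge p-1>1$ whenever $m\ge 2$. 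This forces $m=1$, so $\delta_j=\omega_1$ for every $j$ without any restriction on $r$. Your final step then goes through: $u_j\cdot 0$ is a multiple of $\omega_1$, so $u_j\in\{1,s_\ta\}$, and the condition $\langle -u_j\cdot 0,\ta^\vee\rangle=\ell(u_j)+1$ excludes $u_j=1$, giving $\gamma_j=\la$ for all $j$.
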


%%%%%%%%%%%%
%%%Section 6 - Type A
%%%%%%%%%%%%

\section{Type $A_n$, $n \geq 2$}

\vskip .25cm 
Assume throughout this section that $\Phi$ is of type $A_n$, $n \geq 2$, and that
$p > h = n + 1$.  Note that type $A_1$ is equivalent to type $C_1$ which 
was covered in Section 5.

%%%%%%%%%%%%%%%
%%Section 6.1
%%%%%%%%%%%%%%%

\subsection{\bf An Upper Bound for $r=1$.}  We first construct a weight $\la$ with 
$$\opH^{2p - 3}(G,H^0(\la)\otimes H^0(\la^*)^{(1)}) \neq 0.$$
Set $w :=s_{\ta}= s_1s_2\dots s_{n-1}s_ns_{n-1}\dots s_2s_1 \in W$, where $s_i$ 
is again the reflection corresponding to the $i$th simple root $\al_i$. 
Then $\ell(w) = 2n - 1$ and $-w\cdot 0 = n\ta$.  When decomposed 
uniquely into a sum of distinct positive
roots, $-w\cdot 0$ consists of precisely all positive roots which contain either
an $\al_1$ or an $\al_n$ (or both).
Set $\la := p\ta + w\cdot 0 = (p - n)\ta$ and $\mu := \ta$.
Then $\la - \mu = (p - n - 1)\ta$ is a weight of $S^{p-n-1}(\ul^*)$.
Indeed, it is the highest weight corresponding to taking $(p-n-1)$-copies
of $\phi_{\ta} \in \ul^*$ (the root vector corresponding to $\ta$).  
Similar to the argument in Section 5.1, we will show that 
$$P_{p-n-1}( u\cdot\la - \mu)=
\begin{cases}1 \mbox{ if } u = 1\\
0 \mbox{ else.}
\end{cases}$$
We will work with the $\epsilon$-basis of $X(T)$. Rewrite  
$u \cdot \la - \mu = ((p-n)u-1)(\epsilon_1 - \epsilon_{n+1})) + u \cdot 0$ as 
$\sum_i c_i \epsilon_i$. In order for this expression to be a sum of positive
roots, the coefficient $c_1$ has to be nonnegative and $c_{n+1}$has to be less 
than or equal to zero. This forces $u(\epsilon_1) = \epsilon_1$ and 
$u(\epsilon_{n+1} )= \epsilon_{n+1}$. This forces now $u\cdot 0$ to be of the 
form $-\sum_i d_i \alpha_i$ with $d_1 = d_{n+1}=0$. This implies  that  
$u \cdot \la - \mu = (p-n-1)\ta -\sum_i d_i \alpha_i$ can  be written as 
a sum of $p-n-1$ positive roots  if and only if $u=1$.

\begin{prop} Suppose $\Phi$ is of type $A_n$ with $n \geq 2$ and $p \geq n + 2$.
Let $\la = (p - n)\ta = (p-n)\omega_1 + (p-n)\omega_n$.
Then 
\begin{itemize} 
\item[(a)] $\opH^{2p - 3}(G,H^0(\la)\otimes H^0(\la^*)^{(1)}) \cong k$; 
\item[(b)] $\opH^{2p - 3}(\gfp,k) \neq 0$.
\end{itemize}
\end{prop}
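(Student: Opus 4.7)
The plan is to deduce (a) directly from Proposition 3.2 using the setup already developed in Section 6.1. With the factorization $\la = p\mu + w\cdot 0$ for $\mu = \ta$ and $w = s_\ta$ (so $\ell(w) = 2n-1$), and with $i = 2p-3$, the exponent $(i-\ell(w))/2$ equals $p-n-1$. The formula then reads
\[
\dim \opH^{2p-3}\bigl(G, H^0(\la)\otimes H^0(\la^*)^{(1)}\bigr) = \sum_{u\in W}(-1)^{\ell(u)}P_{p-n-1}(u\cdot\la - \ta),
\]
and the $\ep$-basis argument already given in Section 6.1 confirms that only $u = 1$ yields a non-zero term, contributing $P_{p-n-1}((p-n-1)\ta) = 1$.

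For (b) I intend to apply Theorem 2.8(B) to the linkage class $\mathcal{L}$ of $\la$ with $m = 2p-3$. The first hypothesis to verify is that $m$ is the minimum non-vanishing degree within $\mathcal{L}$. Any non-zero dominant $\nu \in \mathcal{L}$ can be written $\nu = p\mu' + w'\cdot 0$ with $\mu' \in X(T)_+ \cap \mathbb{Z}\Phi$ non-zero, the root-lattice constraint following from $\la + \rho \in W\rho + p\mathbb{Z}\Phi$. In type $A_n$ with $n \geq 2$, the quotient $X(T)/\mathbb{Z}\Phi$ is cyclic of order $n+1$ and contains no fundamental weight, so every non-zero dominant $\mu' \in \mathbb{Z}\Phi$ satisfies $\langle\mu',\ta^\vee\rangle \geq 2$. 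Corollary 3.5(b) then forces $i \geq 2(p-1) - 1 = 2p-3$, with the bound attained at $\la$ by (a).

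The remaining hypothesis is that $\opH^{2p-2}(G, H^0(\nu)\otimes H^0(\nu^*)^{(1)}) = 0$ for every $\nu < \la$ in $\mathcal{L}$. I plan to establish the stronger claim that the only such dominant $\nu$ is $\nu = 0$, for which the cohomology is trivially zero. The critical case is $\mu' = \ta$: then $\nu = p\ta + w'\cdot 0$, and using $-w'\cdot 0 = \sum_{\al\in\mathrm{Inv}(w')}\al$, the inequality $\nu < \la$ requires $\mathrm{Inv}(w') \supsetneq \mathrm{Inv}(s_\ta)$. Since $\Phi^+ \setminus \mathrm{Inv}(s_\ta)$ consists exactly of the roots $\al_i + \cdots + \al_j$ with $2 \leq i \leq j \leq n-1$, each such $\al$ satisfies $\langle\al, \al_i^\vee\rangle > 0$, so subtracting it from $\la = (p-n)\ta$ (whose pairing with $\al_i^\vee$ vanishes for internal $i$) produces a negative coefficient at $\omega_i$, destroying dominance; larger choices with $\langle\mu',\ta^\vee\rangle \geq 3$ are handled by observing that the order-$p$ term $p(\mu'-\ta)$ dominates the bounded correction coming from $w'\cdot 0$, forcing $\nu > \la$. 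The main technical obstacle is this combinatorial verification at internal simple coroots; once in place, Theorem 2.8(B) delivers $\opH^{2p-3}(\gfp, k) \neq 0$.
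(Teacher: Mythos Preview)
Your approach matches the paper's: part (a) via Proposition~3.2 and the Section~6.1 computation, part (b) via Theorem~2.8(B) applied to the linkage class of $\la$. The verification that $m=2p-3$ is minimal in $\mathcal{L}$ (using that every nonzero dominant $\mu'\in\mathbb{Z}\Phi$ has $\langle\mu',\ta^\vee\rangle\ge 2$, then Corollary~3.5(b)) is also the same.

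For the remaining hypothesis of Theorem~2.8(B), you and the paper both reduce to showing that the only dominant $\nu<\la$ in $\mathcal{L}$ is $\nu=0$, but your combinatorial treatment has a gap. You split into $\mu'=\ta$ versus $\langle\mu',\ta^\vee\rangle\ge 3$, overlooking that for $n\ge 3$ there are dominant weights $\mu'\neq\ta$ in the root lattice with $\langle\mu',\ta^\vee\rangle=2$, namely $\mu'=\omega_i+\omega_{n+1-i}$ for $2\le i\le n-1$. Moreover, the assertion that ``$p(\mu'-\ta)$ dominates the bounded correction from $w'\cdot 0$, forcing $\nu>\la$'' is not sharp enough: when $p$ is close to $n+2$ the entries of $-w'\cdot 0$ can be of order $(n+1)^2/4$, and in fact $\nu>\la$ can fail. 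What you actually need is the weaker conclusion $\nu\not<\la$, and a clean uniform argument is this: any dominant $\mu'>\ta$ in $\mathbb{Z}\Phi$ has $M_2(\mu')\ge 2$ (a short induction using dominance at $\al_1,\al_2,\al_3,\dots$ shows $M_2(\mu')=1$ forces all $M_k(\mu')=1$, hence $\mu'=\ta$); then $M_2(\nu)\ge 2p-N_2=2p-2(n-1)$, while $\nu<\la$ would force $M_2(\nu)\le M_2(\la)=p-n$, giving the contradiction $p\le n-2$.

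The paper sidesteps this case analysis entirely by noting that $\la=s_{\ta,p}\cdot 0$ lies in the second $p$-alcove, so the only dominant weight linked to $\la$ and strictly below it is the one in the fundamental alcove, namely $0$. Your inversion-set argument for the $\mu'=\ta$ case is correct and essentially reproves this alcove fact by hand for that slice.
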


\begin{proof} Part (a) follows from our analysis above and 
Proposition 3.2.

For part (b) suppose that $0 \neq \mu \in X(T)_+$ is linked to $(p - n)\ta$ and
$\opH^i(G,H^0(\mu)\otimes H^0(\mu^*)^{(1)}) \neq 0$ for some $i > 0$.
As noted in Section 3.2, we necessarily have $\mu = p\delta + w\cdot 0$ for some
$0 \neq \delta \in X(T)_+$ and $w \in W$.  Observe that, since $\mu$ lies in the root lattice,
$\delta$ also lies in the root lattice.  Therefore, 
$\langle\delta,\ta^{\vee}\rangle \geq 2$.  From Corollary 3.5(b) we 
get $i \geq 2(p - 1) - 1 = 2p - 3$.

Note also that $(p-n)\ta$ lies in the second fundamental $p$-alcove.  
Indeed, it is the reflection of the zero
weight across the upper wall.  So the only weight $\mu$ of the form 
$p\ta + w\cdot 0$ with $\mu < \la$ would be the zero weight. 
And we know that $\opH^i(G,k) = 0$ for all $i > 0$.  Therefore, we can 
apply Theorem 2.8(B) to deduce the result.

\end{proof}

%%%%%%%%%%%%%%%%%new
%% Section 6.2
%%%%%%%%%%%%%%%%%%%

\subsection{\bf An Upper Bound for $r>1$.} The following result indentifies 
a non-zero cohomology class in degree $r(2p-3)$.

\begin{lem}
Assume $n\geq 2$ and $p >n+2$. Let $\la = (p-n)\ta = (p-n)(\omega_1+ \omega_n)$. Then 
$\Ext^{r(2p-3)}_G(V(\la)^{(r)}, H^0(\la)) \cong k.$
\end{lem}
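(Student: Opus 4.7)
The plan is to argue by induction on $r$, closely paralleling the proof of the type $C_n$ analog (the Lemma in Section 5.4). The base case $r=1$ is Proposition 6.1(a). For the inductive step, we set up the Lyndon--Hochschild--Serre spectral sequence
$$
E_2^{k,l} = \Ext_{G/G_1}^k(V(\la)^{(r)}, \opH^l(G_1, H^0(\la))) \Rightarrow \Ext_G^{k+l}(V(\la)^{(r)}, H^0(\la)).
$$
The strategy is to establish (a) $E_2^{k,l} = 0$ for $k+l < r(2p-3)$; (b) $E_2^{k,l} = 0$ for $k+l = r(2p-3)$ and $l \neq 2p-3$; and (c) $E_2^{(r-1)(2p-3),\,2p-3} \cong \Ext_G^{(r-1)(2p-3)}(V(\la)^{(r-1)}, H^0(\la))$. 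Once (a)--(c) are in place, the surviving $E_2$-term transgresses to $E_\infty$ and the inductive hypothesis completes the argument.

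For (a), if $E_2^{k,l}\neq 0$, apply Lemma 4.1, Remark 4.4, and Propositions 4.2--4.3 to obtain a chain $\la = \gamma_0, \gamma_1, \ldots, \gamma_r = \la$ with $\gamma_j = p\delta_j + u_j \cdot 0$, $\delta_j \in X(T)_+ \setminus \{0\}$, and
$$
k+l \geq \sum_{j=1}^r (p-1)\langle \delta_j, \ta^\vee\rangle - r.
$$
The key upgrade over the general bound $r(p-2)$ of Theorem 4.4 is that each $\delta_j$ lies in the root lattice $\mathbb{Z}\Phi$. Indeed, $\la = (p-n)\ta \in \mathbb{Z}\Phi$ since $\ta = \omega_1 + \omega_n$ does, and weights of $\ind_B^G(S^m(\ul^*) \otimes \delta_{j-1})$ are congruent to $\delta_{j-1}$ modulo $\mathbb{Z}\Phi$, so the chain condition $\Hom_G(V(\gamma_j),\opH^{l_j}(G_1,H^0(\gamma_{j-1}))^{(-1)}) \neq 0$ iteratively forces $\gamma_j \in \mathbb{Z}\Phi$; since $\gcd(p,n+1)=1$, this transfers to $\delta_j \in \mathbb{Z}\Phi$. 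In type $A_n$, $\langle \omega_i, \ta^\vee\rangle = 1$ for every $i$, so $\langle \delta, \ta^\vee\rangle = \sum c_i$ when $\delta = \sum c_i \omega_i$; the membership criterion $\sum i c_i \equiv 0 \pmod{n+1}$ forbids $\sum c_i = 1$ for any non-zero $\delta \in \mathbb{Z}\Phi \cap X(T)_+$. Hence $\langle \delta_j, \ta^\vee\rangle \geq 2$ for each $j$, yielding $k+l \geq 2r(p-1) - r = r(2p-3)$.

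For (b) and (c), at equality each inequality in the chain is tight. By the equality clause of Proposition 4.3, this forces $\gamma_j - \delta_{j-1} = \frac{l_j - \ell(u_{j-1})}{2}\ta$ and $\langle -u_j \cdot 0, \ta^\vee\rangle = \ell(u_j)+1$, together with $\langle \delta_j, \ta^\vee\rangle = 2$, for every $j$. Anchoring at $\gamma_r = \la$ with $\delta_r = \ta$, $u_r = s_\ta$ and propagating these relations through the chain -- in direct parallel with the uniqueness analysis of Section 6.1 -- one forces $\delta_j = \ta$, $u_j = s_\ta$, and $\gamma_j = \la$ for all $j$. In particular, $l = l_1 = 2p - 3$, establishing (b). With $\gamma_1 = \la$ identified, Lemma 3.1 then identifies $E_2^{(r-1)(2p-3),\,2p-3}$ with $\Ext_G^{(r-1)(2p-3)}(V(\la)^{(r-1)}, H^0(\la))$, giving (c).

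The main obstacle will be rigorously verifying the uniqueness in (b), namely ruling out other root-lattice weights $\delta_j$ with $\langle \delta_j, \ta^\vee\rangle = 2$ (such as $\omega_k + \omega_{n+1-k}$ for $2 \leq k \leq n-1$), and the corresponding Weyl-group elements. This requires a case analysis of the combinatorial identity $\langle -u\cdot 0, \ta^\vee\rangle = \ell(u)+1$ -- equivalent to $\ta$ lying in the inversion set of $u$ while every other inversion root $\sigma$ satisfies $\langle \sigma, \ta^\vee\rangle = 1$ -- combined with dominance of $\gamma_j = p\delta_j + u_j \cdot 0$, to force the chain to collapse to the constant sequence $\gamma_j = \la$. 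The argument is more delicate than in type $C_n$, where the root-lattice constraint alone pinned down the minimal weight uniquely.
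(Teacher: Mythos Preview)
Your conditions (a)--(c) do not suffice to conclude the isomorphism. The class in $E_2^{(r-1)(2p-3),\,2p-3}$ supports outgoing differentials $d_s$ with targets $E_s^{(r-1)(2p-3)+s,\,2p-2-s}$ for $s\ge 2$; these live in total degree $r(2p-3)+1$ and have second index $l = 2p-2-s < 2p-3$, so neither (a) nor (b) applies to them. Without ruling these out, the class could die before reaching $E_\infty$, and you obtain only $\Ext_G^{r(2p-3)}(V(\la)^{(r)},H^0(\la)) \in \{0,k\}$.

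The paper closes this gap with an additional fact, proved independently of the chain of Propositions 4.2--4.3: $E_2^{k,l}=0$ for every $l<2p-3$. This comes from a direct weight analysis of $\opH^l(G_1,H^0(\la))^{(-1)} \cong \ind_B^G\bigl(S^{(l-2n+1)/2}(\ul^*)\otimes\ta\bigr)$, showing that for $l\le 2p-3$ the only dominant $\ga=p\delta+u\cdot0$ with $\delta$ in the root lattice for which $\Hom_G(V(\ga),\opH^l(G_1,H^0(\la))^{(-1)})\neq0$ is $\ga=\la$, occurring exactly once and only at $l=2p-3$ (the case $\ga=0$ is excluded since the trivial module is not a good-filtration section). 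This same analysis yields the identification $E_2^{k,2p-3}\cong\Ext_G^k(V(\la)^{(r-1)},H^0(\la))$ for \emph{every} $k$, which is stronger than your (c), and it bypasses the ``main obstacle'' you flag: one never propagates the equality conditions through the whole chain, only analyzes the single step $j=1$ (anchored at $\delta_0=\ta$) to recover (b). Note also that your route to (c) via Lemma 4.1 does not give what you claim: Lemma 4.1 furnishes only the \emph{existence} of a suitable $\ga$ when $E_2^{k,l}\neq0$, not an isomorphism; the latter genuinely requires knowing that $H^0(\la)$ is the unique relevant good-filtration factor at $l=2p-3$.
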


\begin{proof}
We use induction on $r$. If $r=1$ the assertion follows from Section 6.1. 
The following argument follows closely the argument in Section 5.4. Again we make use of the LHS spectral sequence 
$$E_2^{k,l}=\Ext_{G/G_1}^k(V(\la)^{(r)}, \opH^l(G_1,H^0(\la))) \Rightarrow \Ext^{k+l}_G(V(\la)^{(r)}, H^0(\la)),$$ 
and assume that  $E_2^{k,l}\neq 0$. We apply Lemma 4.1 and Remark 4.4 to conclude that there exists a dominant weight $\gamma=p\delta + u\cdot 0$ with
$\Hom_{G}(V(\ga), \opH^{l}(G_1,H^0(\la))^{(-1)})\neq 0$ and $\Ext^{k}_G(V(\la)^{(r-1)}, H^0(\ga))\neq 0.$ 
Furthermore, by Propositions 4.2 and  4.3
there exists a sequence of non-zero weights
 $\la = \ga_0, \ga_1, \dots, \ga_{r-1} , \ga_r= \la \in X(T)_+$  with $\ga_1 = \ga$ such that 
 $\ga_j = p \delta_j + u_j\cdot 0$ for some $u_j \in W$ and  nonzero $\delta_j \in X(T)_+$.  Note that the linkage principle forces all $\delta_j$ to be in the root lattice. Since none of the fundamental weights are contained in the root lattice,  $\langle \delta_j ,\ta^{\vee}\rangle \geq 2$. From (\ref{rbound}), we get
\begin{equation}\label{rbound-2}
k+l \geq  \left(\sum_{j=1}^r (p-1) \langle \delta_j, 
        \ta^{\vee}\rangle\right) - r\geq r(2p-3).
\end{equation}
For the first inequality in (\ref{rbound-2}) to be an equality, from 
Proposition 4.3, we must have  
$p\delta_j -\delta_{j-1} + u_j\cdot 0= ((l_j-l(u_{j-1}))/2)\ta$ 
(where $l_j$ is as in Proposition 4.2) and
$\langle-u_j\cdot 0,\ta^{\vee}\rangle = \ell(u_j) + 1$ for all $1\leq j \leq r$.
Further, for the second inequality to be an equality, clearly we must have 
$\langle \delta_j ,\ta^{\vee}\rangle = 2$.
It follows immediately that 
$E_2^{k,l} =0$ whenever 
$k+l < r(2p-3)$.

Looking  at $\Hom_{G}(V(\ga), \opH^{l}(G_1,H^0(\la))^{(-1)})
\cong \Hom_B(V(\ga),S^{\frac{l-2n+1}{2}}(\ul^*)\otimes\ta))$ one concludes that 
for $l\leq 2p-3$  the only possible weights $\ga$ of the form $p\delta + u\cdot 0$ with $\delta$ in the root lattice that  make the above expression non-zero are $\la$ and zero. Clearly the trivial module does not appear as a section in a good filtration  of  $\opH^{l}(G_1,H^0(\la))^{(-1)}$ while $H^0(\la)$ appears only once. Namely, in the case $l=2p-3$. The latter follows from the discussion in Section 6.1.
 
 The linkage principle now forces  $l \geq 2p-3$. Moreover, if $l=2p-3$, the only possible choice  is  $\gamma = \la$, and hence $E_2^{k,2p-3}
 \cong \Ext^{k}_G(V(\la)^{(r-1)}, H^0(\la)).$

If $k+l= r(2p-3)$, then (\ref{rbound-2}) becomes an equality. This forces $\ga _1=\ga=p\delta+u\cdot 0=((l-l(u))/2)\ta+ \ta$ and $\delta_1=\delta = \ta$. 
The only elements $u$ of the Weyl group with $u \cdot 0$ being a multiple of $\ta$ are the identity and $s_{\ta}$. Now $
\langle-u\cdot 0,\ta^{\vee}\rangle = 
\ell(u) + 1$ forces 
$\ga = p\omega_1+ s_{\ta}\cdot 0=\la$, which forces $l=2p-3$ and $k=(r-1)(2p-3)$.

As in 5.4 it follows that
$$
E_2^{k,l} \cong
 \begin{cases}
 0 &\text{ if } k+l < r(2p-3)\\
 0 &\text{ if } l < 2p-3\\
 0 &\text{ if } k+l = r(2p-3) \text{ and } l\neq 2p-3\\
\Ext^{k}_G(V(\la)^{(r-1)}, H^0(\la)) &\text{ if } k+l = r(2p-3) \text{ and } l=2p-3.
\end{cases}
$$ 
 Therefore, the $((r-1)(p-2),p-2)$-term of the $E_2$-page transgresses to the $E_{\infty}$-page and 
produces an isomorphism $\Ext^{r(p-2)}_G(V(\la)^{(r)}, H^0(\la))\cong \Ext^{(r-1)(p-2)}_G(V(\la)^{(r-1)}, H^0(\la))$, and the claim follows by
induction.
\end{proof}

\begin{rem} We have actually shown a stronger statement. Namely, for any dominant weight $\la$ of the form $p \delta + u \cdot 0$ with $\delta$ in the root lattice, one has 
$$
\Ext^{i}_G(V(\la)^{(r)}, H^0(\la))  \cong
 \begin{cases}
  0 &\text{ if } i < 2p-3,\\
 0 &\text{ if }i =  r(2p-3) \text{ and } \la \neq (p-n)(\omega_1+\omega_n),\\
 k &\text{ if }i =  r(2p-3) \text{ and } \la = (p-n)(\omega_1+\omega_n).
\end{cases}
$$ 

\end{rem}
From Theorem 2.8(B) one concludes the following.
\begin{cor}
Suppose $\Phi$ is of type $A_n$ with $n\geq 2$ and $p \geq n+2$. Then $\opH^{r(2p-3)}(\gfpr,k) \neq 0.
$
\end{cor}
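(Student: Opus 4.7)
The plan is to apply Theorem 2.8(B) to the linkage class $\mathcal{L}$ of the weight $\la := (p-n)\ta = (p-n)(\omega_1+\omega_n)$, with $m := r(2p-3)$. The required non-vanishing $\opH^m(G,H^0(\la)\otimes H^0(\la^*)^{(r)}) \neq 0$ is immediate from Lemma 3.1 and Lemma 6.2:
$$
\opH^{r(2p-3)}(G,H^0(\la)\otimes H^0(\la^*)^{(r)}) \cong \Ext^{r(2p-3)}_G(V(\la)^{(r)},H^0(\la)) \cong k.
$$

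For the minimality of $m$ across $\mathcal{L}$, note that $\la$ lies in the root lattice, so by the linkage principle every weight of $\mathcal{L}$ does too. For any nonzero dominant $\nu \in \mathcal{L}$, block considerations together with $p > h$ (as in Section 3.2) force $\nu = p\delta + u\cdot 0$ for some $u \in W$ and a nonzero dominant $\delta$; since $\nu$ lies in the root lattice and $p > h = n+1$ gives $\gcd(p,n+1) = 1$, the weight $\delta$ also lies in the root lattice. As no fundamental weight in type $A_n$ is a root-lattice element, $\langle \delta,\ta^\vee\rangle \geq 2$, and the spectral-sequence estimate used in the proof of Lemma 6.2 (specifically the bound arising from Proposition 4.3) yields $\opH^i(G,H^0(\nu)\otimes H^0(\nu^*)^{(r)})=0$ for all $0 < i < r(2p-3)$. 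The case $\nu = 0$ is handled by $\opH^i(G,k) = 0$ for $i > 0$.

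To verify the remaining hypothesis $\opH^{m+1}(G,H^0(\nu)\otimes H^0(\nu^*)^{(r)}) = 0$ for all $\nu \in \mathcal{L}$ with $\nu < \la$, I argue exactly as in the proof of Proposition 6.1: the weight $\la$ sits in the second fundamental $p$-alcove, being the reflection of $0$ across the affine wall $\langle x+\rho,\ta^\vee\rangle = p$, and every weight of $\mathcal{L}$ is $p$-regular (since $0$ is), so the only dominant element of $\mathcal{L}$ strictly below $\la$ in the partial order is $\nu = 0$; the required vanishing then reduces to $\opH^{r(2p-3)+1}(G,k) = 0$, which is automatic. The only real subtlety here is this alcove-geometric claim, but it concerns only $G$-linkage, which is independent of $r$, so the argument from the $r=1$ setting transfers verbatim. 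With both hypotheses of Theorem 2.8(B) confirmed, the conclusion $\opH^{r(2p-3)}(\gfpr,k) \neq 0$ follows.
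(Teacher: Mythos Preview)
Your proof is correct and follows essentially the same approach as the paper. The paper's own argument is the single line ``From Theorem~2.8(B) one concludes the following,'' relying implicitly on Remark~6.2 for the minimality of $m=r(2p-3)$ within the linkage class and on the second-alcove observation from the proof of Proposition~6.1(b) for the $\opH^{m+1}$-hypothesis; you have simply spelled these verifications out explicitly.
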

Corollary 6.2 and Theorem 4.4 imply that the least positive $i$ with 
$\opH^i(\gfpr,k) \neq 0$ satisfies $r(p - 2) \leq i \leq r(2p - 3)$.  
In the following sections, we identify precisely 
the value of $i$.  The answer will depend on the relationship between
$p$ and $n$.

%%%%%%%%%%%%%
%%Section 6.3
%%%%%%%%%%%%

\subsection{\bf Counting Simple Roots.} Let $\alpha_1, \alpha_2,\dots, \alpha_n$ denote the simple roots and $\omega_1, \omega_2, ... , \omega_n$ the corresponding fundamental weights. 
Any weight $\gamma$ can be written in the form $\gamma = \sum_{j=1}^n c_j \alpha_j$ with $c_j \in {\mathbb Q}$. We define $M_j(\ga) := c_j $,  $M(\ga) := \max \{ c_j \}$, and $m(\ga) := \max \{ j \;|\; c_j = M(\ga)\}.$ In addition we set $N_j:=j(n+1-j).$  We make the following

\begin{obs} For $1 \leq j \leq n$ and $w \in W$,
\begin{itemize}
\item[(a)] $\omega_j = \frac{1}{n+1}(\text{ sum of all positive roots which contain } \al_j)$,
\item[(b)] $N_j$ is the number of positive roots in $\Phi$ which contain $\al_j$,
\item[(c)] $m(\omega_j) = j$,
\item[(d)] $M_{m(\omega_j)} (\omega_j)=M_{j} (\omega_j)=M(\omega_j)= \frac{N_j}{n+1}$,
\item[(e)] $M_j(2\rho) = N_j$,
\item[(f)] $M_j(2\rho) = M_j(-w_0\cdot 0) \geq M_j(-w\cdot 0)$.
\end{itemize}
\end{obs}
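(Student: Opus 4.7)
The plan is to verify all six parts by elementary counting in the standard $\vare$-realization of the $A_n$ root system. Take $\Phi = \{\vare_i - \vare_k : 1 \leq i \neq k \leq n+1\}$ with $\al_i = \vare_i - \vare_{i+1}$; then every positive root has the form $\al_{i,k} := \al_i + \al_{i+1} + \cdots + \al_{k-1}$ for $1 \leq i < k \leq n+1$, and $\al_{i,k}$ contains $\al_j$ as a summand precisely when $i \leq j < k$. Part (b) is then the count of pairs $(i,k)$ with $1 \leq i \leq j < k \leq n+1$, and part (e) follows immediately from $2\rho = \sum_{\al \in \Phi^+}\al$ together with (b).

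For (a), I would compute the coefficient of $\al_k$ in the sum $S_j$ of positive roots containing $\al_j$: it counts pairs $(i,\ell)$ with $i \leq \min(j,k)$ and $\ell > \max(j,k)$, giving $\min(j,k)\cdot(n+1-\max(j,k))$. Comparing with the standard formula $(C^{-1})_{jk} = \min(j,k)(n+1-\max(j,k))/(n+1)$ for the inverse Cartan matrix in type $A_n$ identifies $S_j = (n+1)\omega_j$. The same explicit formula with $j$ fixed gives coefficients $c_i$ of $\omega_j = \sum_i c_i\al_i$ that strictly increase up to $i = j$ and strictly decrease afterwards, so the maximum is attained uniquely at $i = j$; this yields both (c) and (d), with $M(\omega_j) = c_j = N_j/(n+1)$.

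Part (f) needs a separate argument. In type $A_n$ one has $w_0(\al_i) = -\al_{n+1-i}$, whence $w_0\rho = -\rho$ and $-w_0 \cdot 0 = 2\rho$, giving the equality. For the inequality, I would invoke the standard identity
$$-w\cdot 0 \;=\; \rho - w\rho \;=\; \sum_{\ga \in N(w^{-1})} \ga,$$
where $N(w^{-1}) := \{\ga \in \Phi^+ : w^{-1}\ga \in \Phi^-\} \subseteq \Phi^+$ has cardinality $\ell(w)$ (this is proved by splitting $\Phi^+$ according to the sign of $w\al$, and is compatible with the unique positive-root decomposition in Observation 2.2(B)). Since $-w\cdot 0$ is thus a subset-sum of positive roots while $2\rho$ is the full sum, every coefficient satisfies $M_j(-w\cdot 0) \leq M_j(2\rho) = N_j$, as required.

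None of the computations is difficult; the main (minor) obstacle is organizing the identifications in (a) and (f) and citing or re-deriving the inverse Cartan matrix formula and the standard expression for $\rho - w\rho$.
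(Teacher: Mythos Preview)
Your proof is correct in all six parts. The paper itself states this result as an Observation without giving any proof, so there is no approach to compare against; your verification via the $\vare$-model, the inverse Cartan matrix formula, and the standard identity $\rho - w\rho = \sum_{\ga \in N(w^{-1})}\ga$ is exactly the kind of elementary argument the authors are implicitly leaving to the reader.
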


Suppose we have 
dominant weights $\la, \mu$ with $\la = p\delta_2 + w_2\cdot 0$,  $\mu = p\delta_1 + w_1\cdot 0$ 
and $\opH^i(G,H^0(\mu)\otimes H^0(\la^*)^{(1)}) \neq 0$ for some $i>0$.  
As in the proof of Proposition 3.4, 
$\la - \delta_1$ must be a weight of $S^{\frac{i - \ell(w_1)}{2}}(\ul^*)$.
Hence $M_{m(\delta_2)}(\la -\delta_1)\leq \frac{i - \ell(w_1)}{2}.$ 
Using $\la = p\delta_2 + w_2\cdot 0$, it follows that 
\begin{equation}\label{mcond}
2pM_{m(\delta_2)}(\delta_2) + 2M_{m(\delta_2)}(w_2\cdot 0)- 2M_{m(\delta_2)}(\delta_1)+ l(w_1) \leq i.
\end{equation}

Note that $M_{m(\delta_2)}(\delta_1)\leq M(\delta_1)=M_{m(\delta_1)}(\delta_1)$ and that 
$-M_{m(\delta_1)}(w_1\cdot 0) \leq l(w_1)$ (from Observation 2.2(B)). One obtains
\begin{equation}\label{mcond2}
2pM_{m(\delta_2)}(\delta_2) + 2M_{m(\delta_2)}(w_2\cdot 0)- 2M_{m(\delta_1)}(\delta_1) 
-M_{m(\delta_1)}(w_1\cdot 0)\leq i.
\end{equation}
Next assume that $\mu = \la$. Then (\ref{mcond2}) results in
$
2(p-1)M_{m(\delta_1)}(\delta_1) + M_{m(\delta_1)}(w_1\cdot 0)\leq i.
$
Using Observation (f) gives
\begin{equation}\label{mcond3}
2(p-1)M_{m(\delta_1)}(\delta_1) + M_{m(\delta_1)}(-2\rho)\leq i.
\end{equation}

Suppose now that $\delta_1 = \omega_j$ for $1 \leq j \leq n$. From Observations (d) and (e), (\ref{mcond3}) becomes

\begin{equation}\label{nj}
 2\left(\frac{p-1}{n+1}\right)N_j - N_j \leq i.
\end{equation}

However, we can say more than this.  Suppose that in fact $M_j(-w_1\cdot 0) = N_j$.
Then, when expressed as a sum of distinct positive roots, $-w_1\cdot 0$
contains all $N_j$ roots containing $\al_j$ and possibly some other positive
roots. In other words, $-w_1\cdot 0 = (n + 1)\omega_j + \si$ where $\si$ is
a sum of distinct positive roots not containing $\al_j$.  Then
$$
\la = p\omega_j + w_1\cdot 0 = p\omega_j - (n + 1)\omega_j - \si
        = (p - n - 1)\omega_j - \si \leq (p - n - 1)\omega_j.
$$
Hence, the only way $\la$ can be dominant is if $\si = 0$.  In other words, 
$\la = (p - n - 1)\omega_j$ and we have shown the following.

\begin{prop} Suppose that $\Phi$ is of type $A_n$ with $n \geq 2$ and 
$\la = p\omega_j + w\cdot 0 \in X(T)_+$ for $w \in W$ is a 
weight of $S^{\frac{i - \ell(w)}{2}}(\ul^*)\otimes \omega_j$.
Then
$$
i \geq \Big{[}2\left(\frac{p-1}{n+1}\right) - 1\Big{]}j(n + 1 - j)
$$
with equality possible if and only if $\la = (p - n - 1)\omega_j$.
\end{prop}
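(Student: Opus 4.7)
The plan is to specialize the chain of inequalities (\ref{mcond})--(\ref{nj}) developed in Section 6.3 to the case $\delta_1 = \delta_2 = \omega_j$, $w_1 = w_2 = w$, and $\mu = \la$. The hypothesis that $\la = p\omega_j + w\cdot 0$ is a weight of $S^{(i-\ell(w))/2}(\ul^*)\otimes \omega_j$ is precisely the input needed to invoke (\ref{mcond}) in this setting. Substituting $\delta_1 = \omega_j$ and using Observation 6.4(d) to evaluate $M_j(\omega_j)= N_j/(n+1)$, together with (e) and (f) to obtain $M_j(-w\cdot 0) \leq M_j(2\rho) = N_j$, converts (\ref{mcond3}) into (\ref{nj}), which is exactly the desired lower bound.

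For the equality characterization, I would retrace the two places at which slack is introduced. Passing from (\ref{mcond}) to (\ref{mcond2}) relies on the inequality $-M_j(w\cdot 0) \leq \ell(w)$: by Observation 2.2(B), $-w\cdot 0$ is uniquely a sum of $\ell(w)$ distinct positive roots, and in type $A_n$ each positive root has $M_j$-coefficient either $0$ or $1$, so equality here forces every root in that decomposition to contain $\al_j$. Passing from (\ref{mcond3}) to (\ref{nj}) invokes $M_j(-w\cdot 0) \leq N_j$ from Observations 6.4(e)--(f); equality here forces $M_j(-w\cdot 0) = N_j$, i.e., all $N_j$ positive roots containing $\al_j$ actually appear in the decomposition. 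Taken together, these two conditions force $\ell(w) = N_j$ and
$$
-w\cdot 0 \;=\; \sum_{\beta\in\Phi^+,\;\al_j\in\mathrm{supp}(\beta)} \beta \;=\; (n+1)\,\omega_j
$$
by Observation 6.4(a), which yields $\la = p\omega_j - (n+1)\omega_j = (p-n-1)\omega_j$.

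For the reverse direction, I would verify that $\la = (p-n-1)\omega_j$ is indeed attained by taking $w$ to be the unique Weyl element whose inversion set is exactly the $N_j$ positive roots containing $\al_j$ (the minimal coset representative for the maximal parabolic associated with $\al_j$); this $w$ satisfies $\ell(w) = N_j$ and $-w\cdot 0 = (n+1)\omega_j$, so $\la = p\omega_j + w\cdot 0$ as required.

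The main obstacle I anticipate is the careful bookkeeping in the equality analysis: ensuring that the two independent equality conditions, when combined, simultaneously pin down $\ell(w) = N_j$ and identify the decomposition of $-w\cdot 0$ exactly, so that $\la = (p-n-1)\omega_j$ is forced directly without needing a separate dominance argument on the sub-root-system of type $A_{j-1}\times A_{n-j}$.
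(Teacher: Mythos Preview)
Your proposal is correct and for the inequality itself follows exactly the paper's route: specialize the chain (\ref{mcond})--(\ref{nj}) to $\delta_1=\delta_2=\omega_j$, $w_1=w_2=w$, and read off $i\geq\big[2\tfrac{p-1}{n+1}-1\big]N_j$.

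The only difference is in the equality characterization. The paper uses just the single condition $M_j(-w\cdot 0)=N_j$ (necessary for equality in passing to (\ref{mcond3})), writes $-w\cdot 0=(n+1)\omega_j+\sigma$ with $\sigma$ a sum of distinct positive roots not containing $\al_j$, and then invokes the dominance of $\la=(p-n-1)\omega_j-\sigma$ to force $\sigma=0$. You instead track both equality conditions in the chain---$\ell(w)=M_j(-w\cdot 0)$ and $M_j(-w\cdot 0)=N_j$---and combine them to pin down $-w\cdot 0=(n+1)\omega_j$ directly. Both arguments are valid; yours avoids the dominance step at the cost of keeping track of one more equality, while the paper's version needs only the second equality condition because dominance does the remaining work. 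Your closing worry about ``a separate dominance argument on the sub-root-system of type $A_{j-1}\times A_{n-j}$'' is exactly the step the paper takes, and it goes through cleanly: if $\sigma\neq 0$ lies in the positive root lattice of that subsystem, some $\langle\sigma,\al_k^\vee\rangle>0$ with $k\neq j$, contradicting $\langle\la,\al_k^\vee\rangle\geq 0$.
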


\begin{rem} The assumption that $\la = p\omega_j + w\cdot 0$ is a weight of 
$S^{\frac{i - \ell(w)}{2}}(\ul^*)\otimes \omega_j$ places restrictions on $p$ (and $n$).
Indeed, $\la - \omega_j  = (p - 1)\omega_j + w\cdot 0$
must lie in the root lattice.  But $w \cdot 0$ lies in the root lattice. 
Therefore, $(p-1)\omega_j$ must also lie in the root lattice.
However, for $1 \leq j \leq (n+1)/2$, to have $(p-1)\omega_j$ or symmetrically
$(p-1)\omega_{n+1-j}$ in the root lattice, we must have that $(n+1)$ 
divides $(p-1)j$.  
\end{rem}

Given the constraints noted in the remark, it is useful to rewrite the 
right hand side of the proposition as

\begin{equation}\label{redo}
\Big{[}2\left(\frac{p-1}{n+1}\right) - 1\Big{]}j(n + 1 - j) 
        = \big{[}2(p-1) - (n + 1)\big{]}j\left(1 - \frac{j}{n+1}\right).
\end{equation}

%%%%%%%%%%%%
%%Section 6.4
%%%%%%%%%%%%%

\subsection{\bf Larger Weights.} In this section, we will see that the only non-zero 
non-fundamental dominant weight $\la$ which can have $\opH^i(G,H^0(\la)\otimes H^0(\la^*)^{(1)}) 
\neq 0$ for $0 \leq i \leq 2p - 3$ is the weight $\la = (p - n)\ta$ considered in Section 6.1.
Indeed, observe that when $p \geq n + 2$, 
$2(p-1) +2(n-1)  \left [\frac{p-1}{n + 1}-1\right]> 2p-3$.

\begin{prop} Suppose that $\Phi$ is of type $A_n$ with $n \geq 2$ and $p \geq n + 2$. 
Let $\la = p\mu + w\cdot 0 \in X(T)_{+}$ for $w \in W$ with
$\langle\mu,\ta\rangle \geq 2$.  If $\la \neq (p - n)\ta$, then 
$\opH^i(G,H^0(\la)\otimes H^0(\la^*)^{(1)}) = 0$ for 
$$0 < i \leq 2(p-1) +2(n-1)\left[\frac{p-1}{n+1} -1\right].$$
\end{prop}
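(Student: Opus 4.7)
The approach will be to extend the argument behind Proposition 6.3 from $\mu=\omega_j$ to arbitrary $\mu$ with $\langle\mu,\ta\rangle\geq 2$. By Lemma 3.1 and Frobenius reciprocity, non-vanishing of $\opH^{i}(G,H^{0}(\la)\otimes H^{0}(\la^{*})^{(1)})$ forces $\la-\mu=(p-1)\mu+w\cdot 0$ to be a weight of $S^{N}(\ul^{*})$, where $N=(i-\ell(w))/2$; equivalently, $\la-\mu$ is a sum of $N$ positive roots. For each $j\in\{1,\ldots,n\}$, every positive root in type $A_n$ contributes at most $1$ to the $\al_j$-coefficient, so $M_j(\la-\mu)\leq N$. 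Substituting $\la=p\mu+w\cdot 0$ and rearranging gives
$$i\;\geq\;2(p-1)M_j(\mu)+2M_j(w\cdot 0)+\ell(w).$$
Exactly as in the proof of Proposition 6.3, one has $M_j(w\cdot 0)\geq -N_j$, with equality forcing both $\ell(w)\geq N_j$ and $-w\cdot 0=(n+1)\omega_j$. Hence for each $j$ we obtain the refined bound $i\geq 2(p-1)M_j(\mu)-N_j$.

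The plan is then to maximize this bound over $j$. Writing $\mu=\sum_{k}m_{k}\omega_{k}$ with $\sum m_k\geq 2$, Observation (a) of Section~6.3 yields $M_j(\omega_k)=\min(j,k)(n+1-\max(j,k))/(n+1)$, which determines $M_j(\mu)$ by linearity. Up to the Dynkin diagram symmetry $j\leftrightarrow n+1-j$, the possibilities for $\mu$ are $\mu=2\omega_k$ for some $k$, $\mu=\omega_j+\omega_k$ with $j<k$, or $\sum m_k\geq 3$. A routine case-by-case check shows that, for each such $\mu$ other than $\mu=\ta=\omega_1+\omega_n$, an appropriate choice of $j$ (typically $j=1$ or $j=n$) makes $2(p-1)M_j(\mu)-N_j$ strictly exceed the threshold $2(p-1)+2(n-1)[(p-1)/(n+1)-1]$.

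The exceptional case $\mu=\ta$ with $\la\neq(p-n)\ta$ is treated by invoking Proposition 3.4(b): its equality conditions, worked out exactly as in Section~6.1, determine $\la$ uniquely as $(p-n)\ta$. Hence any other $\la=p\ta+w\cdot 0$ satisfies $i\geq 2p-2$, and combining this with the $j$-bound for $j=1$ or $j=n$ then pushes $i$ past the threshold.

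The main obstacle will be the borderline situations in which our lower bound only meets, rather than exceeds, the threshold (for instance $\mu=2\omega_1$ when $n=2$, or $\mu=\omega_1+\omega_2$ when $n=3$). In each such case the equality conditions $\ell(w)=N_j$ and $-w\cdot 0=(n+1)\omega_j$, together with the dominance of $\la$, force $w$ and hence $\la$ to be unique. A direct evaluation of the alternating sum in Proposition 3.2 at this explicit $\la$ should exhibit a cancellation of the Kostant partition function terms, completing the vanishing.
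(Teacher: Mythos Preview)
Your overall strategy coincides with the paper's: both rest on the root--coefficient bound $i \geq 2(p-1)M_j(\mu) - N_j$, which is exactly inequality (6.3.3). The difference is in how the index $j$ is chosen. The paper does not optimize $j$ case by case; it always takes $j = m(\mu)$ (the index where $M_j(\mu)$ is maximal), writes $\mu = a\,\omega_{m(\mu)} + \sigma$, and splits into the two cases $a \geq 2$ and $a = 1$. A short uniform estimate then gives $M_{m(\mu)}(\mu) \geq 2n/(n+1)$ (after excluding $\mu = \omega_1 + \omega_n$), and the threshold falls out in one line.

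Your suggestion that $j = 1$ or $j = n$ is ``typically'' the right choice is wrong and would leave real gaps. For $\mu = 2\omega_2$ in $A_3$, the $j=1$ bound is $2(p-1)-3$, well below the threshold $3(p-1)-4$; only $j = 2 = m(\mu)$ gives $4(p-1)-4$, which exceeds it. So the ``routine case-by-case check'' you invoke would in fact force you back to $j = m(\mu)$, and you would end up reproducing the paper's argument.

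Your treatment of $\mu = \tilde\alpha$ with $\lambda \neq (p-n)\tilde\alpha$ does not work. Proposition~3.4(b) yields only $i \geq 2p-2$, and since $M_j(\tilde\alpha) = 1$ for every $j$, the $j$-bound gives at best $i \geq 2(p-1) - n$, which is weaker still; neither reaches the threshold once $p > n+2$. (The paper's own proof in fact simply assumes $\mu \neq \omega_1 + \omega_n$ at this point; that case is handled separately, via Corollary~3.5(b), only for the degrees $i \leq 2p-3$ that matter in the applications.)

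Finally, the explicit Kostant-partition computations you propose for the borderline weights are unnecessary: the paper is content with the non-strict conclusion $i \geq$ threshold, and that is all that is used downstream, since the threshold strictly exceeds $2p-3$. Your equality analysis also slightly overclaims: $M_j(w\cdot 0) = -N_j$ only says that $-w\cdot 0$ contains all $N_j$ positive roots through $\alpha_j$; it does not by itself force $-w\cdot 0 = (n+1)\omega_j$ unless one also knows $\ell(w) = N_j$.
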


\begin{proof} Let $\la$ and $\mu$ be as given. Assume that $i \neq 0$ and $\opH^i(G,H^0(\la)\otimes H^0(\la^*)^{(1)}) \neq 0$. 
Using  our definition of $m(\mu)$ from Section 6.3 we write $\mu = a\omega_{m(\mu)} + \sigma$, where $\sigma$ is a sum  of 
fundamental weights other than  $\omega_{m(\mu)}$ and $a$ is a positive integer. Note that $\sigma$ is nonzero if $a =1$. 
Set $j = \min \{ m(\mu), n+1- m(\mu)\}.$  Note that 
$M_{m(\mu)} (\omega_l)\geq \frac{j}{n+1}$  for any $l \neq j$. We obtain the following inequality
$$M_{m(\mu)}(\mu) \geq 
\begin{cases} \frac{2 N_j}{n+1} &\mbox{ if } a\geq 2,\\\\
\frac{N_j +j}{n+1} &\mbox{ else.}
\end{cases}
 $$
Substituting the above into equation (\ref{mcond3}) with $\delta_1 = \mu$ yields
$$
i \geq 
\begin{cases}
2\left(\frac{p-1}{n+1}\right)(2N_j) - N_j &\mbox{ if } a \geq 2,\\\\
2\left(\frac{p-1}{n+1}\right)(N_j + j) - N_j &\mbox{ else. }
\end{cases}
$$
As a function of $j$, $N_j$ is increasing on the interval $(0, (n+1)/2)$. Therefore both of the above expressions are minimal when $j$ is as small as possible. In the first case $j=1$ is possible. However, since we are assuming that $\mu \neq \omega_1 + \omega_n$, we may assume that $j\geq 2$ for the second case. One obtains 
\begin{equation}
i \geq 
\begin{cases}
2\left(\frac{p-1}{n+1}\right)2n - n &\mbox{ if } a \geq 2,\\\\
2\left(\frac{p-1}{n+1}\right)(2(n-1) +2 ) - 2(n-1)=2\left(\frac{p-1}{n+1}\right)2n - 2(n-1) &\mbox{ else. }
\end{cases}
\end{equation}

Hence,
\begin{eqnarray*}
i &\geq& 4n \left(\frac{p-1}{n + 1}\right)-2(n-1)\\ &=& 4(n+1) \left(\frac{p-1}{n + 1}\right)- 4 \left(\frac{p-1}{n + 1}\right)-2(n-1)\\
&=& 2(p-1) +\left[2(n+1)-4\right]  \left(\frac{p-1}{n + 1}\right)-2(n-1) \\
&=& 2(p-1) +2(n-1) \left(\frac{p-1}{n + 1}\right)-2(n-1)\\
& =&2(p-1) +2(n-1)  \left [\frac{p-1}{n + 1}-1\right]. 
\end{eqnarray*}
\end{proof}

To determine sharp vanishing bounds, we need to consider the relationship between 
$p$ and $n$.  This will be done in the succeeding sections.

%%%%%%%%%%%%%
%%Section 6.5
%%%%%%%%%%%%

\subsection{\bf The Case: $\gcd(p-1,n+1) = 1$.} It follows from Remark 6.3 that under 
this assumption, the weight  $(p - 1)\omega_j$ does not lie in the root lattice for any $j$.  
Therefore, $\mu$ must be the sum of at least two (not necessarily distinct) 
fundamental dominant weights, and $\langle\mu,\ta^{\vee}\rangle \geq 2$. From Corollary 3.5(b), 
we conclude that $i \geq  2p - 3$. Combining this with Proposition 6.1, Proposition 6.4, and Theorem 2.8(A),
we obtain these sharp bounds.

\begin{thm} Suppose $\Phi$ is of type $A_n$ with $n \geq 2$, $p \geq n + 2$ and 
$\gcd(p - 1,n+1) = 1$.  Then
\begin{itemize}
\item[(a)] $\opH^i(\gfp,k) = 0$ for $0 < i < 2p - 3$;
\item[(b)] $\opH^{2p - 3}(\gfp,k) \cong k$.
\end{itemize}
\end{thm}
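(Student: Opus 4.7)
The plan is to apply Theorem 2.8(A) with $m = 2p-3$ and $\la = (p-n)\ta$, from which both conclusions (a) and (b) will follow simultaneously.

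First I would verify that $m = 2p-3$ is the least positive integer such that $\opH^m(G, H^0(\nu)\otimes H^0(\nu^*)^{(1)}) \neq 0$ for some dominant $\nu$. If such a nonzero $\nu$ exists with $\opH^i \neq 0$ for some $i > 0$, the block analysis in Section 3.2 forces $\nu = p\mu + w\cdot 0$ with $\mu$ a nonzero dominant weight, and the argument in Remark 6.3 shows that $(p-1)\mu$ must lie in the root lattice. Under the coprimality hypothesis $\gcd(p-1,n+1)=1$, no $(p-1)\omega_j$ with $1 \leq j \leq n$ lies in the root lattice; hence $\mu$ cannot be a single fundamental weight, so $\langle \mu, \ta^\vee\rangle \geq 2$, and Corollary 3.5(b) then yields $i \geq 2(p-1)-1 = 2p-3$. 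The matching upper bound, together with the required witness weight, comes from Proposition 6.1, which supplies $\opH^{2p-3}(G, H^0(\la)\otimes H^0(\la^*)^{(1)}) \cong k$ for $\la = (p-n)\ta$.

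Next I would verify the two vanishing hypotheses required by Theorem 2.8(A). The previous step shows that every dominant $\nu = p\mu + w\cdot 0$ contributing nontrivially to cohomology in positive degree automatically satisfies $\langle \mu, \ta^\vee\rangle \geq 2$, which is exactly the hypothesis of Proposition 6.4. Hence for any such $\nu \neq \la$, Proposition 6.4 yields
\[
\opH^i(G, H^0(\nu)\otimes H^0(\nu^*)^{(1)}) = 0 \quad \text{for } 0 < i \leq 2(p-1) + 2(n-1)\!\left[\tfrac{p-1}{n+1}-1\right].
\]
Since $p \geq n+2$ forces $\tfrac{p-1}{n+1} \geq 1$, this bound is at least $2p-2$, so both the $\opH^{m+1}$ vanishing condition for $\nu < \la$ linked to $\la$ and the stronger $\opH^{m}$ vanishing for all $\nu \neq \la$ demanded by part (iii) are simultaneously covered. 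The remaining case $\nu = 0$ is immediate from $\opH^i(G,k) = 0$ for $i > 0$. Applying parts (i)--(iii) of Theorem 2.8(A) then produces $\opH^i(\gfp,k) = 0$ for $0 < i < 2p-3$ together with $\opH^{2p-3}(\gfp,k) \cong \opH^{2p-3}(G, H^0(\la)\otimes H^0(\la^*)^{(1)}) \cong k$, which are exactly statements (a) and (b).

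The delicate point is the borderline case $p = n+2$, at which the bound of Proposition 6.4 coincides exactly with $2p-2$; for any larger prime there is extra room to spare. Conceptually, the role of the hypothesis $\gcd(p-1,n+1)=1$ is to purge the single fundamental weights from the pool of admissible $\mu$'s, and it is precisely this restriction that lifts the generic lower bound $p-2$ of Theorem 4.4 up to the sharper value $2p-3$ observed here.
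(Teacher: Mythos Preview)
Your argument is correct and follows exactly the same route as the paper: rule out fundamental $\mu$ via Remark 6.3 and the coprimality hypothesis, invoke Corollary 3.5(b) to get $i \geq 2p-3$, use Proposition 6.1 for the witness $\la = (p-n)\ta$, and then feed Proposition 6.4 into Theorem 2.8(A) to conclude. One small side remark: your ``delicate borderline case $p = n+2$'' never actually arises here, since $p = n+2$ gives $\gcd(p-1,n+1) = n+1 \neq 1$; under the theorem's hypothesis one always has $p > n+2$, so the Proposition 6.4 bound strictly exceeds $2p-2$ and there is always room to spare.
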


%%%%%%%%%%%%%%%%%
%%Section 6.6
%%%%%%%%%%%%%%%%

\subsection{\bf The Case: $1 < \gcd(p-1,n+1) < n + 1$.}  For convenience, set $g := \gcd(p-1,n+1)$.  
We investigate here the inequality in Proposition 6.3.  Note that since
$n+1$ does not divide $p - 1$, neither $(p-1)\omega_1$ nor
$(p-1)\omega_n$ lie in the root lattice.  So we restrict attention 
to $\omega_j$ with $1 < j < n$. As such, there is nothing to consider unless
$n \geq 3$. Without a loss of generality assume that $j \leq (n+1)/2$.  Furthermore, 
from Remark 6.3, we may assume that $j(p-1)$ is divisible by $(n + 1)$.

Consider the function 
$f(j) = \Big{[}2\left(\frac{p-1}{n+1}\right) - 1\Big{]}j(n + 1 - j)$,
which is a quadratic polynomial in the variable $j$. For our purposes, we want
to minimize $f(j)$. This evidently occurs when $j$ is minimal (for $j \leq (n + 1)/2$).
So we consider the case that $j$ is minimal such that $j(p-1)$ is divisible by $(n + 1)$.  This implies that, $n + 1 = gj$, where $g = \gcd(n+1, p-1)$.  With this subsitution,
using (\ref{redo}), the inequality in Proposition 6.3 may be rewritten as
\begin{equation}\label{redoi}
i \geq \big{[}2(p-1) - (n + 1)\big{]}j\left(1 - \frac{1}{g}\right).
\end{equation}

Suppose first that $n + 1 = gj$ is odd (and $n \geq 4$). Then, both
$g$ and $j$ must be odd. Therefore, $g \geq 3$ and $3 \leq j \leq (n + 1)/2$.  
Hence, 
$$
j\left(1 - \frac{1}{g}\right) \geq 3\left(1 - \frac13\right) = 2.
$$
Equation (\ref{redoi}) allows us to conclude that
\begin{align*}
i &\geq 2\big{[}2(p-1) - (n + 1)\big{]}\\
        &= 4(p-1) - 2(n + 1)\\
        &= 2(p-1) + 2(p-1) - 2(n + 1)\\
        &= 2(p-1) + 2(p - n - 2)\\
        &\geq 2(p-1) = 2p - 2
\end{align*}
since $p \geq n + 2$. So we get a bound on $i$ which is strictly larger than
$2p - 3$.

Consider now the case that $n + 1 = gj$ is even (and $n \geq 3$).  Since $p - 1$ is
even, $g$ is necessarily even. In particular, $g \geq 2$.  Since $g \neq n + 1$, we
also have $j \geq 2$.  Suppose first that $j \geq 4$. Then
$$
j\left(1 - \frac{1}{g}\right) \geq 4\left(1 - \frac12\right) = 2.
$$
And so the same argument as in the $n+1$-odd case would show that $i \geq 2p - 2$.

Suppose next that $j = 3$ and $g > 2$. Then $g \geq 4$.  Then,
$$
j\left(1 - \frac{1}{g}\right) \geq 3\left(1 - \frac14\right) = \frac94 > 2.
$$
We conclude that $i > 2p - 2$.

Suppose next that $j = 3$ and $g = 2$.  Then $n + 1 = 6$ and $p - 1 > n + 1 = 6$.
So $p \geq 11$ (as $p$ is prime).  Here we get
\begin{align*}
i & \geq \big{[}2(p-1) - (n + 1)\big{]}j\left(1 - \frac{1}{g}\right)\\
        &= \big{[}2(p-1) - 6\big{]}3\left(1 - \frac12\right)\\
        &= 3\big{[}(p-1) - 3\big{]}\\
        &= 3p - 12\\
        &= 2p - 3 + p - 9\\
        &\geq 2p - 1
\end{align*}
since $p \geq 11$.

Suppose next that $j = 2$, i.e., $n + 1 = 2g$.  Since $p - 1 > n + 1$ 
and $g$ divides $p - 1$, we must have $p - 1 \geq 3g$.  Write $p - 1 = (3 + m)g$
for an integer $m \geq 0$.  Here we get
\begin{align*}
i & \geq \big{[}2(p-1) - (n + 1)\big{]}j\left(1 - \frac{1}{g}\right)\\
        &= \big{[}2(p-1) - 2g\big{]}2\left(1 - \frac{1}{g}\right)\\
        &= 4(p-1) - \frac{4(p-1)}{g} - 4g + 4\\
        &= 2(p-1) + 2(p-1) - \frac{4(p-1)}{g} - 4g + 4\\
        &= 2(p-1) + 2(3 + m)g - 4(3 + m) - 4g + 4\\
        &= 2(p-1) + 2g - 8 + m(2g - 4)\\
        &\geq 2(p-1) + 2g - 8
\end{align*}
since $m \geq 0$ and $g \geq 2$.  If $g \geq 4$, then we conclude that 
$i \geq 2p - 2$.

However, if $g = 2$, we can only conclude that $i \geq 2p - 6$.
This happens when $n + 1 = gj = 4$ or $n = 3$. Note that for $n = 3$,
we either have $g = 2$ or $g = 4$ with the latter case falling
into the $n + 1$ divides $p - 1$ category.  The case of $n = 3$ will
be dealt with specifically in Section 6.11. We summarize our findings in 
the following proposition.

\begin{prop} Suppose $\Phi$ is of type $A_n$ with $n \geq 4$.  Suppose further
that $p > n + 2$ and $1 < \gcd(p - 1,n+1) < n + 1$.   
Let $\la = p\omega_j + w\cdot 0 \in X(T)_+$ for $2 \leq j \leq n-1$ 
and $w \in W$.  Then $\opH^i(G,H^0(\la)\otimes H^0(\la^*)^{(1)}) = 0$ 
for $0 < i \leq 2p - 3$.
\end{prop}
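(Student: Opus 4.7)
The plan is to combine the lower bound of Proposition 6.3 with the divisibility restriction of Remark 6.3, and then to dispatch the residual small cases by elementary arithmetic.

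Suppose $\opH^{i}(G,H^0(\la)\otimes H^0(\la^*)^{(1)})\neq 0$ for some $i>0$; I aim to show $i>2p-3$. By Lemma 3.1 together with formula (3.2.1) and Frobenius reciprocity (exactly as in Section 6.3), the hypothesis forces $\la = p\omega_j + w\cdot 0$ to occur as a weight of $S^{(i-\ell(w))/2}(\ul^*)\otimes \omega_j$. Proposition 6.3 then yields
\[
i\;\ge\;[2(p-1)-(n+1)]\,j\!\left(1-\frac{j}{n+1}\right).
\]

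Set $g:=\gcd(p-1,n+1)$. By Remark 6.3, $(p-1)\omega_j$ must lie in the root lattice, hence $(n+1)/g$ divides $j$. The quantity $j(n+1-j)$ is symmetric under $j\leftrightarrow n+1-j$, and the divisibility condition is preserved by this involution, so I may assume $j\le (n+1)/2$. On that interval $j(n+1-j)$ is increasing in $j$, so the lower bound is minimized when $j=(n+1)/g$; writing $n+1=gj$ one has $g\ge 2$ and $j\ge 2$ (since $g<n+1$), and the bound becomes
\[
(\star)\qquad i\;\ge\;[2(p-1)-(n+1)]\,j\!\left(1-\frac{1}{g}\right).
\]

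The remaining work is a finite case analysis verifying that $(\star)$ strictly exceeds $2p-3$ whenever $p>n+2$ and $n\ge 4$. If $n+1$ is odd then both $g$ and $j$ are odd with $g,j\ge 3$, so $j(1-1/g)\ge 2$ and $(\star)\ge 4(p-1)-2(n+1)>2p-3$. If $n+1$ is even then $g$ is even: the subcase $j\ge 4$ again gives $j(1-1/g)\ge 2$; the subcase $j=3$ with $g\ge 4$ gives $j(1-1/g)\ge 9/4>2$; the subcase $j=3,\ g=2$ forces $n=5$ and $p\ge 11$, yielding $(\star)=3p-12\ge 2p-1$. Finally, for $j=2$ one has $n+1=2g$, and $p>n+2$ together with $g\mid(p-1)$ forces $p-1\ge 3g$; writing $p-1=(3+m)g$ and expanding $(\star)$ produces the estimate $i\ge 2(p-1)+2g-8$, which exceeds $2p-3$ whenever $g\ge 4$.

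The sole configuration escaping these bounds is $j=g=2$, i.e.\ $n+1=4$ or $n=3$, which is precisely the degenerate case excluded by the hypothesis $n\ge 4$. The main obstacle is therefore not a single technical difficulty but the bookkeeping of the case analysis: one must verify that no small values of $g$ and $j$ produce a bound $\le 2p-3$ under the numerical constraints, and the $n=3$ exception is exactly what necessitates the hypothesis of the proposition.
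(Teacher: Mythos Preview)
Your argument is correct and follows essentially the same route as the paper's own proof in Section~6.6: invoke the divisibility constraint from Remark~6.3 to force $(n+1)/g\mid j$, reduce by the symmetry $j\leftrightarrow n+1-j$ to the minimal such $j=(n+1)/g$, and then run exactly the same finite case analysis on the parities and sizes of $g$ and $j$ to show $(\star)>2p-3$ in every case except $g=j=2$ (i.e.\ $n=3$). The arithmetic in each subcase matches the paper's computations line for line.
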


%%%%%%%%%%%%%
%%%Section 6.7
%%%%%%%%%%%%%

\subsection{\bf The Case: $p - 1 = n + 1$.} Under this condition, we can explicitly 
construct a weight $\la$ with $\opH^{p-2}(G,H^0(\la)\otimes H^0(\la^*)^{(1)}) \neq 0$.
Let $\la = p\omega_1 + w\cdot 0$ where $w = s_1s_2s_3\dots s_n$ with 
$s_i$ denoting the simple reflection corresponding to the $i$th simple root.  
Then $-w\cdot 0$ is the sum of all $n$ positive roots
containing $\al_1$.  In other words, $-w\cdot 0 = (n + 1)\omega_1$.
So $\la = p\omega_1 + w\cdot 0 = p\omega_1 - (n + 1)\omega_1 = \omega_1$.
Since $\ell(w) = n = p - 2$, by Proposition 3.2,
$$
\dim\opH^{p-2}(G,H^0(\omega_1)\otimes H^0(\omega_1^*)^{(1)}) = 
\sum_{u \in W} (-1)^{\ell(u)} P_{0}( u\cdot\omega_1 - \omega_1) = 1.
$$
Note that $\omega_1^* = \omega_n$ and one can similarly argue
that $\opH^{p - 2}(G,H^0(\omega_n)\otimes H^0(\omega_n^*)^{(1)}) \cong k$.

\begin{thm}  Suppose $\Phi$ is of type $A_n$ with $n \geq 2$ and $p - 1 = n + 1$. 
Then
\begin{itemize}
\item[(a)] $\opH^i(\gfp,k) = 0$ for $0 < i < p - 2$;
\item[(b)] $\opH^{p-2}(\gfp,k) \cong k\oplus k$.
\end{itemize}
\end{thm}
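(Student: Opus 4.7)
Part (a) follows immediately from Theorem 4.4 with $r = 1$, so only part (b) needs argument.

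For part (b), the plan is to sandwich $\dim \opH^{p-2}(\gfp,k)$ between an upper bound from Theorem 2.6 and a lower bound coming from two applications of Theorem 2.8(B). The key preliminary is the classification of all dominant $\la$ with $\opH^{p-2}(G, H^0(\la) \otimes H^0(\la^*)^{(1)}) \neq 0$. Writing $\la = p\mu + w\cdot 0$ with $0 \neq \mu \in X(T)_+$ and $w \in W$, Corollary 3.5(b) applied with $i = p - 2$ forces $\langle \mu, \ta^{\vee}\rangle = 1$, so $\mu = \omega_j$ is a fundamental weight. Proposition 6.3 then requires $N_j = j(n+1-j) \leq p - 2 = n$; since $p = n+2$ prime forces $n \geq 3$, and $N_j > n$ for $2 \leq j \leq n-1$, we must have $j \in \{1, n\}$, and the equality case of Proposition 6.3 simultaneously forces $\la = (p-n-1)\omega_j = \omega_j$. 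Section 6.7 already verifies $\opH^{p-2}(G, H^0(\omega_j)\otimes H^0(\omega_j^*)^{(1)}) \cong k$ for $j \in \{1,n\}$, so Theorem 2.6 with $M = N = k$ gives $\dim \opH^{p-2}(\gfp, k) \leq 2$.

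For the lower bound I will verify that $\omega_1$ and $\omega_n$ lie in distinct linkage classes. Since $\langle \omega_j + \rho, \ta^{\vee}\rangle = 1 + (h-1) = n+1 < p$, both weights lie strictly inside the fundamental $p$-alcove, so by the linkage principle each is the unique dominant weight of its linkage class within the closure of this alcove. In particular, $\omega_1$ and $\omega_n$ determine two different linkage classes, and $\omega_j$ is the minimal dominant weight in its class. Applying Theorem 2.8(B) to each of these two linkage classes --- the hypothesis on $\nu < \la$ is vacuous in both cases --- yields non-zero contributions to two distinct summands of $\opH^{p-2}(\gfp, k) \cong \opH^{p-2}(G, \cgr)$ under the linkage decomposition noted immediately before Theorem 2.8(B). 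Hence $\dim \opH^{p-2}(\gfp, k) \geq 2$.

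Combining the two bounds yields $\opH^{p-2}(\gfp, k) \cong k \oplus k$. The main obstacle is the classification in the upper bound, i.e.\ ruling out any $\la \notin \{\omega_1, \omega_n\}$. This is rendered tractable by the coincidence $p - 1 = n + 1$: the bounds in Corollary 3.5(b) and Proposition 6.3 saturate simultaneously at the two extremal values $j = 1$ and $j = n$, which is precisely why the answer is $k \oplus k$ here rather than $k$ as in Theorem 6.5.
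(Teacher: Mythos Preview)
Your proposal is correct and follows essentially the same approach as the paper: both obtain part (a) from Theorem~4.4, classify the dominant $\la$ with $\opH^{p-2}(G,H^0(\la)\otimes H^0(\la^*)^{(1)})\neq 0$ as $\{\omega_1,\omega_n\}$ via Corollary~3.5(b) and Proposition~6.3, and then combine the known value $\cong k$ in each case with the fact that $\omega_1$ and $\omega_n$ lie in distinct linkage classes (and are minimal in those classes) to conclude. The only minor difference is that you invoke the equality clause of Proposition~6.3 directly to pin down $\la=(p-n-1)\omega_j=\omega_j$, whereas the paper instead argues via Observation~2.2(B) and Proposition~3.2 that $\ell(w)\geq n$ and that the partition-function sum forces $\la=\mu$; your route is slightly more efficient but not substantively different.
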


\begin{proof} Part (a) follows from Theorem 4.4.  For part (b), from the discussion
above, we know that $\opH^{p - 2}(G,H^0(\la)\otimes H^0(\la^*)^{(1)}) \cong k$
for $\la = \omega_1$ or $\la = \omega_n$.  We 
claim that if $\opH^{p - 2}(G,H^0(\la)\otimes H^0(\la^*)^{(1)}) \neq 0$ for 
a dominant weight $\la$, then $\la = \omega_1$ or $\omega_n$.  To see this, 
write $\la = p\mu + w\cdot 0$ for some $\mu \in X(T)_{+}$ and $w \in W$.
By Corollary 3.5(b), $\mu$ must be a fundamental dominant weight.  
Apply Proposition 6.3 with $p - 1 = n + 1$. The proposition gives that, for $1 < j < n$,
$$
i \geq j(n + 1 - j) = j(p - 1 - j) \geq 2(p - 1 - 2) = 2p - 6 = (p - 2) + (p - 4).
$$
For $n \geq 3$, $p = n + 2 \geq 5$ and so this gives $i > p - 2$.
Hence, $\mu = \omega_1$ or $\omega_n$.  In other words $\la = p\omega_1 + w\cdot 0$
or $\la = p\omega_n + w\cdot 0$, respectively.  From the proof of Corollary 3.5, we
observe that in order to have 
$\opH^{p - 2}(G,H^0(\la)\otimes H^0(\la^*)^{(1)}) \neq 0$, 
when $-w\cdot 0$ is expressed as a sum of distinct positive roots, one
of those roots must be $\ta = \al_1 + \al_2 + \cdots + \al_n$. From 
Observation 2.2(B), it follows that $\ell(w) \geq n = p - 2$.  
Applying Proposition 3.2, we see that
$$
\dim\opH^{p-2}(G,H^0(\la)\otimes H^0(\la^*)^{(1)}) = 
\sum_{u \in W}(-1)^{\ell(u)} P_0(u\cdot \la - \mu).
$$
One has non-zero cohomology only if $u\cdot \la - \mu = 0$ which can
only happen if $\la = \mu$, which gives the claim.

Since the only dominant weight less than $\omega_1$ or $\omega_n$ 
is the zero weight, $\opH^i(G,k) = 0$ for $i > 0$, and $\omega_1$ and
$\omega_n$ lie in different linkage classes,  the 
discussion in Section 2.8 gives part (b).
\end{proof}

%%%%%%%%%%%%%%%
%%Section 6.8
%%%%%%%%%%%%%%%

\subsection{\bf The Case: $\gcd(p-1,n+1) = n + 1 < p - 1$.} The case $p - 1 = n + 1$ is excluded since that was dealt 
with in Section 6.7. Since $n + 1$ divides $p - 1$, $(p-1)\omega_j$ lies in the root lattice for all 
$1 \leq j \leq n$, and we need to allow $\la = p\omega_j + w\cdot 0$ for 
all $j$ in our general argument.

Write $p - 1 = d(n+1)$ for an integer $d \geq 2$.  We can 
rewrite the inequality in Proposition 6.3 (see also (\ref{redo})) as
\begin{align*}
i &\geq 2j(p-1) - 2j^2\left(\frac{p-1}{n+1}\right) - j(n+1) + j^2\\
        &= 2(p-1) + (2j - 2)(p-1) - 2j^2\left(\frac{p-1}{n+1}\right) - j(n+1) + j^2\\
        &= 2(p-1) + \big{[}(2j -2)(n+1) - 2j^2\big{]}\left(\frac{p-1}{n+1}\right) - j(n+1) + j^2\\
        &= 2(p-1) + \big{[}(2j -2)(n+1) - 2j^2\big{]}d - j(n+1) + j^2.
\end{align*}
For $j = 1$ (or $j = n$), this inequality allows for a value of $i < 2p - 3$.  
This will be discussed more in the next section.  For this section, we 
focus on the case $2 \leq j \leq n - 1$.  By default, we need $n \geq 3$.
For such $j$, the least value of the right hand side
above occurs when $j = 2$ (or $j = n - 1$).  Substituting $j = 2$, the above
inequality becomes
\begin{equation}\label{dcase}
i \geq 2(p-1) + (2n - 6)d - 2n + 2.
\end{equation}
Since $d \geq 2$, (\ref{dcase}) becomes
$$
i \geq 2(p-1) + 2n - 10.
$$
If $n \geq 5$, then $i \geq 2p - 2$.  If $n = 4$ with $d \geq 3$, then (\ref{dcase}) becomes
$$
i \geq 2(p-1) + (2*4 - 6)3 - 2*4 + 2 = 2p - 2.
$$
If $n = 4$ and $d = 2$, then $p - 1 = 2(4 + 1) = 10$ or $p = 11$ and we can only
say that $i \geq 2p - 6$.  This case will be considered in Section 6.12. For $n = 3$, 
notice that the value of $d$ is irrelevant in (\ref{dcase}).  Irrespective of $d$,
we conclude that $i \geq 2p - 6$.  This case will be discussed in Section 6.11.
We summarize the conclusions of this section in the following.

\begin{prop} Suppose $\Phi$ is of type $A_n$ with $n \geq 4$.  Suppose further
that $p > n + 2$ and $\gcd(p - 1,n+1) = n + 1$.  If $n = 4$, assume further
that $p \neq 11$. Let $\la = p\omega_j + w\cdot 0 \in X(T)_+$ for $2 \leq j \leq n-1$ 
and $w \in W$.  Then $\opH^i(G,H^0(\la)\otimes H^0(\la^*)^{(1)}) = 0$ for 
$0 < i \leq 2p - 3$.
\end{prop}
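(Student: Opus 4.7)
The proof of Proposition 6.8 is essentially laid out by the argument preceding its statement, and the plan is to assemble that analysis into a clean deduction. The key input is Proposition 6.3, which says that if $\la = p\omega_j + w\cdot 0 \in X(T)_+$ is a weight of $S^{(i-\ell(w))/2}(\ul^*)\otimes\omega_j$, then
$$
i \geq \big[2(p-1)-(n+1)\big]\,j\Big(1-\tfrac{j}{n+1}\Big).
$$
By Lemma 3.1, the vanishing of $\opH^i(G,H^0(\la)\otimes H^0(\la^*)^{(1)})$ is equivalent to the vanishing of a $\Hom_B$-space into $S^{(i-\ell(w))/2}(\ul^*)\otimes\omega_j$, so non-vanishing puts us into the hypothesis of Proposition 6.3. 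The plan is therefore to expand the right-hand side under the assumption $p-1 = d(n+1)$ with $d \geq 2$, and check that it exceeds $2p-3$ for $2 \leq j \leq n-1$ under the stated hypotheses.

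The first step is algebraic manipulation. Substituting $p-1 = d(n+1)$ and rewriting, one obtains
$$
i \geq 2(p-1) + \big[(2j-2)(n+1) - 2j^2\big]d - j(n+1) + j^2.
$$
The next step is to minimize the right-hand side for $2 \leq j \leq n-1$. Since the coefficient of $d$ is a concave quadratic in $j$ whose minimum on this interval occurs at the endpoints, and by the $j \leftrightarrow n+1-j$ symmetry one may take $j = 2$, yielding
$$
i \geq 2(p-1) + (2n-6)d - 2n + 2.
$$
For $n \geq 5$, using $d \geq 2$, this gives $i \geq 2p - 2 > 2p-3$, as desired. For $n = 4$, the same bound gives $i \geq 2p - 2$ provided $d \geq 3$; and $d = 2$ is precisely the excluded case $p = 11$.

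The last step is to verify that no non-trivial $\la = p\omega_j + w\cdot 0$ with $2 \leq j \leq n-1$ can survive. From the above, any such $\la$ with a non-vanishing cohomology group in degree $i \leq 2p-3$ would contradict Proposition 6.3, which completes the argument. The main obstacle in this proof is not the derivation itself, which is routine quadratic optimization, but rather \emph{identifying the exceptional case} $n=4$, $p=11$. This corresponds to the boundary value $d=2$, $j=2$, where the minimization bound degenerates to $i \geq 2p-6$, too weak to conclude. That case must be quarantined here and treated separately in Section 6.12, which is why the extra hypothesis $p \neq 11$ for $n=4$ is built into the proposition.
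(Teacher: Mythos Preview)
Your proposal is correct and follows essentially the same approach as the paper: invoke Proposition 6.3 via Lemma 3.1 and (3.2.1), rewrite the bound using $p-1 = d(n+1)$ with $d \geq 2$, minimize over $2 \leq j \leq n-1$ at $j=2$ to get $i \geq 2(p-1) + (2n-6)d - 2n + 2$, and then check the cases $n \geq 5$ and $n=4$ (with $d \geq 3$) to obtain $i \geq 2p-2$, quarantining $n=4$, $d=2$ as the excluded $p=11$. One minor remark: your justification that the minimum occurs at $j=2$ appeals only to the concavity of the coefficient of $d$, but the remaining piece $-j(n+1)+j^2$ is convex; the cleanest way to see the claim is that the full right-hand side equals $(2d-1)j(n+1-j)$, which is concave in $j$ and symmetric under $j \leftrightarrow n+1-j$.
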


%%%%%%%%%%%%%%%
%%Section 6.9
%%%%%%%%%%%%%%

\subsection{\bf The Case: $\gcd(p-1,n+1) = n + 1 < p - 1$, continued.}  In this section we 
investigate the case of $\la = p\omega_1 + w\cdot 0$ (or symmetrically, $\la = p\omega_n + w\cdot 0$).

\begin{lem} Suppose that $\Phi$ is of type $A_n$
% with  $\gcd(p-1,n+1) = n + 1 < p - 1$. 
Let $\la = p\omega_1 + w\cdot 0 \in X(T)_+$ or
$\la = p\omega_n + w\cdot 0 \in X(T)_+$ and $\mu = p\omega_j+ v\cdot 0 \in X(T)_+$
for $1 \leq j \leq n$ and $v, w \in W$. Then
$\operatorname{H}^i(G, H^0(\mu) \otimes H^0(\la^*)^{(1)})=0$ for 
$0<i< (p-1-n)n-(n+1-j)j+\ell(v)$. 
\end{lem}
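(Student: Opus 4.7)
The plan is to reduce the cohomology to a $B$-Hom computation and then bound $i$ using the explicit structure of $\la$. By the outer diagram automorphism of type $A_n$ (which exchanges $\omega_1$ and $\omega_n$), I may assume $\la = p\omega_1 + w\cdot 0$. Applying Lemma 3.1 (using the standing assumption $p>h$) together with equation (3.2.1) and Frobenius reciprocity yields
\[ \opH^i(G, H^0(\mu)\otimes H^0(\la^*)^{(1)}) \cong \Hom_B(V(\la),\, S^k(\ul^*)\otimes \omega_j), \qquad k = \tfrac{i-\ell(v)}{2}. \]
In particular the cohomology vanishes unless $k$ is a non-negative integer.

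Next I would determine the structure of $\la$ explicitly. Writing $\la + \rho = p\omega_1 + w\rho$ in $\epsilon$-coordinates, strict dominance of $\la+\rho$ forces $w$ to have the form $w = s_1 s_2 \cdots s_m$ for some $0 \leq m \leq n$. By Observation 2.2(B), $-w\cdot 0 = \sum_{k=1}^m(\al_1 + \cdots + \al_k)$, which rewritten in the $\omega$-basis gives $\la = (p - m - 1)\omega_1 + \omega_{m+1}$ (with the convention $\omega_{n+1} := 0$), and $\ell(w) = m$.

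The Kostant multiplicity formula (analogous to Proposition 3.2) then expresses
\[ \dim \opH^i(G, H^0(\mu)\otimes H^0(\la^*)^{(1)}) = \sum_{u \in W}(-1)^{\ell(u)} P_{k}(u\cdot\la - \omega_j). \]
For non-vanishing, some $u \in W$ must admit a decomposition of $u\cdot\la - \omega_j$ into exactly $k$ positive roots, yielding pointwise estimates $k \geq M_l(u\cdot\la - \omega_j)$ for every simple root $\al_l$. To upgrade these to the required bound $k \geq \tfrac12[(p-1-n)n - (n+1-j)j]$, I would combine them with the summed form of Proposition 3.4(a) across positive roots $\si \in \Phi^+$, using the simply-laced identity $\sum_{\si \in \Phi^+}\langle \nu,\si^\vee\rangle = 2\,\mathrm{Height}(\nu)$ together with the computation $\sum_{\si \in \Phi^+}\langle w\cdot 0, \si^\vee\rangle = -m(m+1)$ for $w = s_1 s_2 \cdots s_m$.

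The principal obstacle is that a single instance of Proposition 3.4(a) yields bounds much weaker than the required one, so the sharp bound $(p-1-n)n - (n+1-j)j + \ell(v)$ demands a careful combinatorial analysis of cancellations in the alternating Kostant sum. Indeed, individual terms $P_k(u\cdot\la - \omega_j)$ can be non-zero for $k$ below the stated threshold, with overall vanishing arising only through cancellations between the $u = e$ and $u = s_\al$ contributions for appropriate simple reflections $s_\al$ (as one already sees in the toy case $A_2$, $p=7$, $\la = 4\omega_1$, $j=1$). Verifying these cancellations uniformly across all admissible values of $m$ and $j$ is the combinatorial heart of the argument.
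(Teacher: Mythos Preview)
Your setup is correct: the reduction via Lemma~3.1 and (3.2.1) to $\Hom_B(V(\la),S^k(\ul^*)\otimes\omega_j)$ with $k=(i-\ell(v))/2$ is exactly what is needed, and your identification $\la=(p-m-1)\omega_1+\omega_{m+1}$ for $w=s_1s_2\cdots s_m$ is right (the paper records the same conclusion with a different reduced expression).

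The gap is precisely where you say it is, and it is not closed. You propose to sum Proposition~3.4(a) over all $\si\in\Phi^+$, but that proposition gives, for each $\si$, an inequality of the form $(\text{expression})\leq i$; averaging these over $\Phi^+$ only yields a bound no stronger than the best single $\si$, so summing does not help. You then correctly observe that individual terms $P_k(u\cdot\la-\omega_j)$ can be nonzero below the threshold and that the required vanishing must come from cancellations in the alternating sum, but you provide no mechanism for proving those cancellations uniformly in $m$ and $j$. That is the entire content of the lemma, so the proposal is a plan without the key step.

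The paper avoids this combinatorial difficulty altogether by a different idea. From the character identity
\[
\ch\bigl(V((p-m)\omega_1)\otimes V(\omega_m)\bigr)=\ch V((p-m)\omega_1+\omega_m)+\ch V(\la),
\]
one bounds $\Hom_G(V(\la),\ind_B^G(S^k(\ul^*)\otimes\omega_j))$ above by $\Hom_G(V((p-m)\omega_1),\ind_B^G(S^k(\ul^*)\otimes\omega_j)\otimes H^0(\omega_{n+1-m}))$. After a $B$-filtration of $H^0(\omega_{n+1-m})$ this becomes a sum of terms $\Hom_G(V((p-m)\omega_1),\ind_B^G(S^k(\ul^*)\otimes(\omega_j-u(\omega_m))))$. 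The point is that $V((p-m)\omega_1)$ has one-dimensional weight spaces, so Kostant's theorem forces the multiplicity of $H^0((p-m)\omega_1)$ in $\bigoplus_k\ind_B^G(S^k(\ul^*)\otimes\nu)$ to be exactly~$1$, concentrated in degree $k=\text{height}((p-m)\omega_1-\nu)$. The required bound then drops out from a height computation, with no cancellation analysis needed. This tensor-product trick is the missing ingredient in your approach.
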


\begin{proof} We give the argument for $\omega_1$. An analogous argument works for $\omega_n$.
If $\la =p\omega_1+w\cdot 0$ is dominant, then a direct computation shows that
$w = s_{k}s_{k-1} ... s_1$ and $\la = (p-k-1)\omega_1+\omega_{k+1}$, 
where $0 \leq k \leq n$. Here we are using the conventions $s_0=1$ and  
$\omega_{n+1}=0$. Moreover, we have the following equations for the formal characters
$$\ch (V((p-k)\omega_1) \otimes V(\omega_{k})) = \ch V((p-k)\omega_1+ \omega_k) + 
\ch V((p-k-1)\omega_1+ \omega_{k+1}).$$
As discussed earlier, the module $\ind_B^G(S^m(\ul^*)\otimes\omega_j)$ has a 
good filtration. We conclude that 
\begin{eqnarray*} 
&&\dim\Hom_G(V(p\omega_1+w\cdot 0), \ind_B^G(S^m(\ul^*)\otimes\omega_j))
\\
&&\leq \dim \Hom_G(V((p-k)\omega_1) \otimes V(\omega_{k}), \ind_B^G(S^m(\ul^*)\otimes\omega_j))\\
&&= \dim \Hom_G(V((p-k)\omega_1) , \ind_B^G(S^m(\ul^*)\otimes\omega_j)\otimes H^0(\omega_{n+1-k})),
\end{eqnarray*} 
where $k = \ell(w)$.

Note that $\langle \omega_j -u(\omega_{k}), \alpha_l^{\vee} \rangle \geq -1$ for all $u \in W$ and $1 \leq l \leq n$. 
It follows from \cite{KLT} that 
$R^j\ind_{B}^{G}( S^m(\ul^*)\otimes (\omega_j +u(\omega_{n+1-k}))) =0$ for $j > 0$. Using the long exact sequence 
that one obtains from a $B$-filtration of $H^0(\omega_{n+1-k})$ one concludes that 
\begin{eqnarray*} 
&&\dim\Hom_G(V(p\omega_1+w\cdot 0), \ind_B^G(S^m(\ul^*)\otimes\omega_j))
\\
&&\leq \dim \Hom_G(V((p-k)\omega_1) , \ind_B^G(S^m(\ul^*)\otimes\omega_j)\otimes H^0(\omega_{n+1-k}))\\
&&= \frac{1}{|\mbox{Stab}_W(\omega_k)|} \sum_{u\in W} \dim \Hom_G(V((p-k)\omega_1), \ind_B^G(S^m(\ul^*)
\otimes\omega_j-u(\omega_{k}))).
\end{eqnarray*} 
The Weyl module $V((p-k)\omega_1)$ has one-dimensional weight spaces 
\cite[II 2.16]{Jan}.  A theorem of Kostant \cite[24.2]{Hum1} implies that  
$$\sum_{m\geq 0} \sum_{x\in W} (-1)^{\ell(x)} P_m(x \cdot((p-k)\omega_1) - \omega_j+u(\omega_k)) = 1.$$
According to \cite[3.8]{AJ} and \cite{KLT} we know that 
\begin{eqnarray*} &&\sum_{x\in W} (-1)^{\ell(x)} P_m(x \cdot((p-k)\omega_1) - \omega_j+u(\omega_k))  \\
&&= \dim \Hom_G(V((p-k)\omega_1), \ind_B^G(S^m(\ul^*)\otimes\omega_j-u(\omega_{k}))\geq 0, 
\end{eqnarray*}
for all $m \geq 0$. Clearly, for $x \neq 1$,  
$$\mbox{height}((p-k)\omega_1-\omega_j + u(\omega_k)) > \mbox{height}(x \cdot((p-k)\omega_1) - \omega_j+u(\omega_k)).$$ 
We conclude that 
$$\sum_{x\in W} (-1)^{\ell(x)} P_m(x \cdot((p-k)\omega_1) - \omega_j+u(\omega_k)) = \begin{cases} 1 \mbox{  if  } m = \mbox{ height}((p-k)\omega_1 -\omega_j + u(\omega_k))\\
0 \mbox{ else}.
\end{cases} $$
We have 
\begin{align*}
\mbox{ height}((p-k)\omega_1-\omega_j + u(\omega_k)) &\geq 
        \mbox{ height}((p-k)\omega_1-\omega_j -\omega_{n+1-k})\\
        &= \frac{(p-k)n}{2}- \frac{(n+1-j)j}{2}- \frac{(n+1-k)k}{2}.
\end{align*}
Hence,
$$\dim \Hom_G(V(p\omega_1+ w\cdot 0), \ind_B^G(S^m\ul^*\otimes \omega_j))=0, $$ 
if 
$$m <\frac{(p-k)n- (n+1-j)j-(n+1-k)k}{2}$$  
where  $k= \ell(w)$.

Setting $m=\frac{i-\ell(v)}{2}$ and applying Lemma 3.1 and (\ref{ind}) yields
$$\operatorname{H}^i(G, H^0(p\omega_1+w\cdot 0) \otimes H^0((p\omega_j+v\cdot 0)^*)^{(1)})=0$$  
for 
$$\frac{i-\ell(v)}{2} < \frac{(p-k)n-(n+1-j)j-(n+1-k)k}{2}.$$
Hence, one obtains vanishing for  
\begin{equation}\label{incond}
i < (p-k)n-(n+1-k)k-(n+1-j)j+\ell(v). 
\end{equation} 
Since the global minimum of $(p-k)n-(n+1-k)k$ on the closed interval $[1,n]$ occurs at $k=n$,
we get the claimed vanishing for 
$$
i \leq (p-1-n)n-(n+1-j)j+\ell(v).
$$
\end{proof}

\begin{prop} Suppose $\Phi$ is of type $A_n$ with $n \geq 3$.  Suppose further
that $p > n + 2$ and $\gcd(p - 1,n+1) = n + 1$. 
Let $\la = p\omega_1 + w\cdot 0 \in X(T)_+$ or $\la = p\omega_n + w\cdot 0$ 
with $w \in W$.  Then $\opH^i(G,H^0(\la)\otimes H^0(\la^*)^{(1)}) = 0$ for 
$0 < i \leq 2p - 3$.
\end{prop}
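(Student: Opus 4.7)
The plan is to apply Lemma 6.9 directly with $\mu = \la$ and verify that the resulting vanishing range covers $i \leq 2p - 3$. For $\la = p\omega_j + w\cdot 0$ with $j \in \{1, n\}$, setting $\mu = \la$ forces the parameters of Lemma 6.9 to be $v = w$ and the same $j$. Since $(n+1-j)j = n$ for both $j = 1$ and $j = n$, Lemma 6.9 yields
$$
\opH^i\bigl(G, H^0(\la) \otimes H^0(\la^*)^{(1)}\bigr) = 0 \quad \text{for } 0 < i < n(p-n-2) + \ell(w).
$$
Since $\ell(w) \geq 0$, it then suffices to prove the uniform inequality $n(p-n-2) \geq 2p - 2$, equivalently $(n-2)p \geq n^2 + 2n - 2$, i.e.\ $p \geq n + 4 + 6/(n-2)$.

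Next I would handle this numerical inequality by a short case analysis. Writing $p = (n+1)d + 1$ with $d \geq 2$ (as forced by $p > n+2$ combined with $\gcd(p-1, n+1) = n+1$), the inequality becomes $(n-2)d \geq n$. For $n \geq 4$ this is immediate from $d \geq 2$. For $n = 3$ it demands $d \geq 3$, i.e.\ $p \geq 13$; the value $d = 2$ would yield $p = 9$, which is ruled out by the standing assumption that $p$ is prime. The smallest admissible prime for $n = 3$ is $p = 13$, which satisfies the inequality (with equality). Similar marginal checks at $n = 4, p = 11$ and $n = 5, p = 13$ give equality or slack, so the inequality holds throughout the admissible range.

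The main obstacle is concentrated entirely in Lemma 6.9, which already packages the heavy lifting (the character identity for $V((p-k)\omega_1)\otimes V(\omega_k)$, the $R^j\ind_B^G$ vanishing for the relevant weights from \cite{KLT}, and the Kostant partition function count). Given that lemma, Proposition 6.10 reduces to the numerical check above, with the case $\la = p\omega_n + w\cdot 0$ subsumed because Lemma 6.9 treats $\omega_1$ and $\omega_n$ by the same bound. Combining these observations gives $\opH^i(G, H^0(\la)\otimes H^0(\la^*)^{(1)}) = 0$ for $0 < i \leq 2p - 3$, as required.
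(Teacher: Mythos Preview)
Your proof is correct and follows essentially the same approach as the paper: apply Lemma~6.9 with $\mu=\lambda$ and then verify a numerical inequality using $p\geq 2n+3$ (plus primality to rule out $p=9$ when $n=3$). The only minor difference is that the paper invokes the sharper inequality (\ref{incond}) from inside the proof of Lemma~6.9, which exploits $\ell(v)=k=\ell(w)$ to obtain the bound $n(p-n-1)$ rather than your $n(p-n-2)$; this makes the paper's numerical check $(p-1-n)n\geq 2p-3$ marginally easier, but your weaker bound still suffices, with equality exactly at $(n,p)\in\{(3,13),(4,11)\}$.
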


\begin{proof}   Equation (\ref{incond}) implies, for the case  $\la = \mu$,
 vanishing for $0 < i < 
 (p-k)n-(n+1-k)k-n+\ell(v)=(p-k)n-(n-k)k-n.$
 Again the global minimum occurs at $k=n$ and one obtains vanishing for $0<i\leq(p - 1 - n)n$. Observe that $p \geq 2(n+1)+1= 2n +3.$ Now
\begin{align*}
(p-1-n)n &= 2p-3 + p(n-2) - n(n+1)+3 \\
        &\geq 2p-3 +(2n+3)(n-2)-n^2-n+3 \\
        &= 2p-3 + (n-1)^2 -4 \\
        &\geq 2p-3
\end{align*}
with equality if and only if $n=3$ and $p=2n+3$. This case  does not occur. 
\end{proof}

%%%%%%%%%%%%%%%%
%%Section 6.10
%%%%%%%%%%%%%%%%

\subsection{\bf The Case: $n = 2$.} Assume for this subsection that $\Phi$ is of type $A_2$ 
with $p > 3$. From Proposition 6.1 we know that $\opH^{2p-3}(\gfp, k) \neq 0.$ Proposition 6.5 
implies that $2p-3$ is indeed the lowest bound unless $3$ divides $p-1$. Note that 
the case $p - 1 = 3$ is not possible for a prime $p$.  If 3 divides $p -1$,
then the only possible non-zero cohomology in lower degrees would come from 
weights of the form $\la =p \omega_1 + w\cdot 0$ or the dual case 
$\la =p \omega_2 + w\cdot 0$. It follows from
Lemma 6.9 (see also the proof of Proposition 6.9) 
that $\opH^i(G, H^0(\la) \otimes H^0(\la^*)^{(1)}) =0$ for $i < 2p-6$. 
Moreover, using the arguments of Lemma 6.9 one can show that $\opH^i(G, H^0(\la) \otimes H^0(\la^*)^{(1)}) =0$ 
for $i\leq 2p-6$ unless $\la = (p-3)\omega_j$. In that case
$\sum_{u\in W} (-1)^{\ell(u)} P_{p-4}(u \cdot((p-3)\omega_j) - \omega_j)=1$ because the 
height of $(p-4)\omega_j$ is exactly $p-4$.
Proposition 3.2 now
says that $\opH^{2p-6}(G, H^0((p-3)\omega_j) \otimes H^0((p-3)\omega_{(3-j)})^{(1)}) \cong k$, $j=1,2$. 
Note that $\omega_1$ and $\omega_2$ are in different linkage
classes.  We conclude the following from Theorem 2.8(A) and the 
linkage discussion in Section 2.8.

\begin{prop}
Suppose $\Phi$ is of type $A_2$ and $p > 3$.
\begin{itemize}
\item[(a)]  If  $3$ divides $p-1$, then
\begin{itemize}
\item[(i)] $\opH^i(\gfp, k) = 0$ for $0 < i < 2p-6$;
\item[(ii)] $\opH^{2p-6}(\gfp, k) \cong k \oplus k.$ 
\end{itemize}
\item[(b)]  If  $3$ does not divide $p-1$, then
 $\opH^i(\gfp, k) = 0$ for $0 < i < 2p-3$. 
\end{itemize}
\end{prop}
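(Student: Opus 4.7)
The plan is to split the argument according to whether $3$ divides $p - 1$. Part (b) is immediate from Theorem~6.5 specialized to $n = 2$: the hypothesis $3 \nmid p - 1$ is exactly $\gcd(p - 1, n + 1) = 1$, and $p > 3$ prime forces $p \geq 5 > n + 2$. The remainder of the plan concerns part (a) and follows the three-stage framework of Sections~2 and~3.

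\emph{Stage 1 --- produce non-vanishing classes.} For $j \in \{1, 2\}$, set $\la_j := (p - 3)\omega_j$, which is dominant since $p > 3$. Choose $w_j \in W$ of length two with $-w_j \cdot 0 = 3\omega_j$, so that $\la_j = p\omega_j + w_j \cdot 0$. The weight $\la_j - \omega_j = (p - 4)\omega_j$ has height exactly $p - 4$ and admits a unique decomposition as a sum of $p - 4$ positive roots; an $\epsilon$-basis argument exactly as in Section~5.1 shows $P_{p - 4}(u \cdot \la_j - \omega_j) = 0$ for $u \neq 1$. Proposition~3.2 then collapses to
$$\dim \opH^{2p - 6}(G, H^0(\la_j) \otimes H^0(\la_j^*)^{(1)}) = \sum_{u \in W} (-1)^{\ell(u)} P_{p - 4}(u \cdot \la_j - \omega_j) = 1.$$

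\emph{Stage 2 --- eliminate all other weights through degree $2p - 6$.} Write an arbitrary candidate as $\la = p\mu + w \cdot 0$ with $0 \neq \mu \in X(T)_+$. If $\langle \mu, \ta^{\vee} \rangle \geq 2$, then either $\la = (p - 2)\ta$, in which case Corollary~3.5(b) forces $i \geq 2p - 3 > 2p - 6$, or Proposition~6.4 applies and gives $i > 2p - 6$ (for $n = 2$ with $3 \mid p - 1$ and $p \geq 7$, the bound $2(p - 1) + 2[(p - 1)/3 - 1]$ strictly exceeds $2p - 6$). If instead $\mu = \omega_j$ for some $j \in \{1, 2\}$, Lemma~6.9 yields $\opH^i(G, H^0(\la) \otimes H^0(\la^*)^{(1)}) = 0$ for $i < 2p - 6$; a careful inspection of the height comparison inside that lemma's proof --- tracking when the inequality on $\operatorname{height}((p - k)\omega_1 - \omega_j + u(\omega_k))$ is strict --- identifies $\la = (p - 3)\omega_j$ as the unique dominant weight of its form producing non-vanishing at the boundary degree $i = 2p - 6$.

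\emph{Stage 3 --- assembly.} Both $\la_1$ and $\la_2$ lie in the interior of the fundamental $p$-alcove (directly verified from $\langle \la_j + \rho, \al^{\vee} \rangle \in (0, p)$ for every positive root $\al$ when $p > 3$), so they belong to distinct $W_p$-linkage classes. Within each linkage class the fundamental alcove contains a unique dominant representative (namely $\la_j$ itself), and every other dominant member of the orbit lies in a higher alcove and is strictly larger than $\la_j$ in the dominance order. Hence no dominant $\nu < \la_j$ is linked to $\la_j$, making the hypothesis of Theorem~2.8(A) vacuous. Applying Theorem~2.8(A) to each of the two linkage classes and combining via the linkage decomposition of $\opH^{\bullet}(G, \cgr)$ from Section~2.8 yields $\opH^{2p - 6}(\gfp, k) \cong k \oplus k$; Corollary~2.6(B) together with Stage~2 supplies $\opH^i(\gfp, k) = 0$ for $0 < i < 2p - 6$. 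The principal technical obstacle is the refinement in Stage~2: sharpening Lemma~6.9 from strict inequality to pinpoint $(p - 3)\omega_j$ as the unique weight of the form $p\omega_j + w \cdot 0$ achieving the boundary bound.
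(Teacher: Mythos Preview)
Your proof is correct and follows essentially the same route as the paper. Part (b) is dispatched via Theorem~6.5 exactly as the paper does, and for part (a) you proceed just as in Section~6.10: construct the two weights $(p-3)\omega_j$, use Proposition~3.2 to see each contributes a one-dimensional class in degree $2p-6$, eliminate all other weights by combining the $\langle\mu,\ta^{\vee}\rangle\geq 2$ bound (Proposition~6.4/Corollary~3.5(b)) with the height analysis inside Lemma~6.9 for fundamental $\mu$, and finish with the linkage decomposition since the two weights sit in the lowest alcove in distinct linkage classes. The only cosmetic difference is that you invoke an $\epsilon$-basis argument \`a la Section~5.1 for the Kostant partition computation, whereas the paper simply observes that the height of $(p-4)\omega_j$ equals $p-4$; both arguments yield the same conclusion.
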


%%%%%%%%%%%%%%%%
%%Section 6.11
%%%%%%%%%%%%%%%%%

\subsection{\bf The Case: $n = 3$.}
Let $\Phi$ be of type $A_3$ with $p > 4$. The case $p=5$ is included in 
Proposition 6.7.  For the remainder of this section we assume that $p >5$.

\begin{lem} Suppose that $\Phi$ is of type $A_3$ with $p > 4$. Then 
$$\sum_{u\in W} (-1)^{\ell(u)} P_{p-5}(u \cdot((p-4)\omega_2) - \omega_2)=1.$$
\end{lem}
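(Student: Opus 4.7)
The plan is a direct combinatorial enumeration, parallel in spirit to the computation in Section 5.1 (and Section 6.1), but this time it is no longer only the identity element of $W$ that contributes to the alternating sum; a small amount of cancellation occurs among four Weyl elements. Using the $\epsilon$-basis of $A_3$ with $\al_i=\epsilon_i-\epsilon_{i+1}$, $\omega_2=\epsilon_1+\epsilon_2$, and $\rho=3\epsilon_1+2\epsilon_2+\epsilon_3$, I would first compute $(p-4)\omega_2+\rho = (p-1)\epsilon_1+(p-2)\epsilon_2+\epsilon_3$. For $u\in W=S_4$, the vector $u(\la+\rho)$ is a permutation $(\si_1,\si_2,\si_3,\si_4)$ of $(p-1,p-2,1,0)$, and a brief calculation shows $u\cdot\la-\omega_2=d_1\al_1+d_2\al_2+d_3\al_3$ (modulo the center) with
$$
d_1=\si_1-\tfrac{p+3}{2},\qquad d_2=\si_1+\si_2-(p+2),\qquad d_3=\si_1+\si_2+\si_3-\tfrac{3p+1}{2}.
$$

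The key step is to identify the $u$'s for which $P_{p-5}(u\cdot\la-\omega_2)\neq 0$; for this each $d_i$ must be a nonnegative integer. Integrality is automatic since $p$ is odd. Checking the three inequalities against the 24 permutations of $(p-1,p-2,1,0)$, for $p\geq 7$ they force $\{\si_1,\si_2\}=\{p-1,p-2\}$ and $\{\si_3,\si_4\}=\{0,1\}$, giving exactly four contributing elements $u\in\{1,\,s_1,\,s_3,\,s_1 s_3\}$. (For $p=5$ only $u=1$ survives and the sum is $P_0(0)=1$.) Explicitly one gets
$$
u\cdot\la-\omega_2 \;=\;\tfrac{p-5-2[u\ni s_1]}{2}\al_1+(p-5)\al_2+\tfrac{p-5-2[u\ni s_3]}{2}\al_3,
$$
where $[u\ni s_i]=1$ if $s_i$ appears in $u$ and $0$ otherwise.

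For each of the four weights, I would count partitions by solving for $(a,b,c,d,e,f)\in\mathbb Z_{\geq 0}^6$ (the multiplicities of $\al_1$, $\al_2$, $\al_3$, $\al_1+\al_2$, $\al_2+\al_3$, $\al_1+\al_2+\al_3$) the linear system $a+d+f=d_1$, $b+d+e+f=d_2$, $c+e+f=d_3$, $a+b+c+d+e+f=p-5$. As in the analogous computation in Section 5.1, this system forces $a=c=0$ and leaves a one-parameter family indexed by $f$. The counts are: $P_{p-5}((p-5)\omega_2)=(p-3)/2$ for $u=1$, and $P_{p-5}=(p-5)/2$ for each of $u=s_1,\,s_3,\,s_1 s_3$. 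With $\ell(1)=0$, $\ell(s_1)=\ell(s_3)=1$, $\ell(s_1 s_3)=2$, the alternating sum collapses:
$$
\tfrac{p-3}{2}-\tfrac{p-5}{2}-\tfrac{p-5}{2}+\tfrac{p-5}{2}=\tfrac{p-3}{2}-\tfrac{p-5}{2}=1.
$$

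The main obstacle is really Step~1 — verifying that no other $u\in W$ contributes. Unlike the situations in Lemma 6.9 or Section 6.1 where a single height argument immediately isolates $u=1$, here several $u$'s contribute and one must carry out the case analysis on the inequalities $d_i\geq 0$ explicitly. Once the four surviving permutations are identified, the remaining partition-counting is routine linear Diophantine bookkeeping, and the numerical identity $\frac{p-3}{2}-\frac{p-5}{2}=1$ completes the proof.
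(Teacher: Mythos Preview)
Your proof is correct and follows essentially the same approach as the paper: both arguments reduce to the four elements $u\in\{1,s_1,s_3,s_1s_3\}=\mathrm{Stab}_W(\omega_2)$ and then evaluate the alternating sum of partition counts directly. Your intermediate values $P_{p-5}=(p-3)/2$ for $u=1$ and $(p-5)/2$ for the other three are in fact the correct ones (the paper records $(p-5)/2$ and $(p-7)/2$, an off-by-one slip that is harmless since shifting all four values by the same constant leaves the alternating sum unchanged).
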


\begin{proof} Observe that $2 \omega_2 = \alpha_1 + 2 \alpha_2 + \alpha_3= \epsilon_1 + \epsilon_2 - \epsilon_3 -\epsilon_4.$ For $u \cdot((p-4)\omega_2) - \omega_2$ to be a sum of positive roots one needs $u(2\omega_2)$ to be a sum of positive roots. This is the case if and only if either $u(2\omega_2) = 2 \omega_2$ or $u(2\omega_2) = \alpha_1 + \alpha_3.$ But we can rule out the second case because here $u \cdot((p-4)\omega_2) - \omega_2= ((p-5)/2) (\alpha_1 +\alpha_3) - \alpha_2 +u \cdot 0$. This is clearly not the sum of positive roots. It follows that we only have to consider $u \in\mbox{Stab}_W(\omega_2)= \{1, s_1, s_3, s_1s_3\}$ and that $u \cdot((p-4)\omega_2) - \omega_2 = (p-5)\omega_2 + u\cdot 0$.
A direct computation now shows that $$P_{p-5}((p-5)\omega_2 + u \cdot 0)= \begin{cases}
(p-5)/2 & \mbox{ if } u = 1\\
(p-7)/2 & \mbox{ if } u = s_1, s_3, s_1s_3.
\end{cases}$$ 
Hence, 
$\sum_{u\in W} (-1)^{\ell(u)} P_{p-5}(u \cdot((p-4)\omega_2) - \omega_2)=\sum_{u\in\mbox{Stab}_W(\omega_2)} (-1)^{\ell(u)} P_{p-5}(u \cdot((p-4)\omega_2) - \omega_2)=1.$ 
\end{proof}

By Proposition 6.1 we know that $\opH^{2p-3}(\gfp, k) \neq 0.$ 
Note that $\gcd(p-1,n+1) = \gcd(p-1,4) = 2$ or $4$.  If $\gcd(p-1,4) = 2$,
then by Remark 6.3, the only possible non-zero cohomology in lower degrees
would come from weights of the form $\la = p \omega_2 + w\cdot 0$.  On 
the other hand, if $\gcd(p-1,4) = 4$, then Proposition 6.9 gives the 
same conclusion.

Suppose $\la = p \omega_2 + w\cdot 0$.  It 
follows from Proposition 6.3 that $\opH^i(G, H^0(\la) \otimes H^0(\la^*)^{(1)}) =0$ 
for $i < 2p-6$ and that $\opH^{2p-6}(G, H^0(\la) \otimes H^0(\la^*)^{(1)}) =0$  
unless $\la = (p-4)\omega_2$.  That occurs if $w\cdot 0 = -4\omega_2$.  By 
direct computation one finds that $\ell(w) = 4$.  By Proposition 3.2 and Lemma 6.11 above, 
we have
\begin{align*}
\dim\opH^{2p-6}(G, H^0((p-4)\omega_2) &\otimes H^0((p-4)\omega_2^*)^{(1)})\\
&=  \sum_{u\in W} (-1)^{\ell(u)} P_{\frac{(2p - 6) - 4}{2}}(u \cdot((p-4)\omega_2) - \omega_2)\\
&=  \sum_{u\in W} (-1)^{\ell(u)} P_{p-5}(u \cdot((p-4)\omega_2) - \omega_2) =1.
\end{align*}
Combining this with Theorem 2.8(A) one obtains that 
$\opH^{2p-6}(\gfp, k) \cong k$. We summarize our findings for $\Phi=A_{3}$ below.

\begin{prop}
Suppose $\Phi$ is of type $A_3$ and $p > 4$.
\begin{itemize}
\item[(a)]  If $p=5$, then
\begin{itemize}
\item[(i)] $\opH^i(\gfp, k) = 0$ for $0 < i < p-2$;
\item[(ii)] $\opH^{p-2}(\gfp, k) \cong k \oplus k.$ 
\end{itemize}
\item[(b)]  If  $p>5$, then
\begin{itemize}
\item[(i)] $\opH^i(\gfp, k) = 0$ for $0 < i < 2p-6$;
\item[(ii)] $\opH^{2p-6}(\gfp, k) \cong k.$ 
\end{itemize}
\end{itemize}
\end{prop}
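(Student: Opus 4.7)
The plan is to split into two cases according to $p$. When $p = 5$, observe that $n+1 = 4 = p-1$, which places us exactly in the setting of Theorem 6.7 with $n = 3$; that theorem then delivers both parts of (a) directly. So the substantive work is case (b), where $p > 5$.

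For $p > 5$ the strategy is to apply Theorem 2.8(A) with $m = 2p-6$ and $\la = (p-4)\omega_2$. First I would enumerate which dominant weights $\la = p\mu + w\cdot 0$ can contribute to $\opH^i(G, H^0(\la)\otimes H^0(\la^*)^{(1)})$ in degrees $0 < i \leq 2p-6$. Corollary 3.5(b) restricts $\mu$ to be a non-zero fundamental weight, and Remark 6.3 further requires $(p-1)\omega_j$ to lie in the root lattice. For $n = 3$ one has $\gcd(p-1,4) \in \{2,4\}$: when $\gcd = 2$, only $\omega_2$ survives; when $\gcd = 4$, all three fundamental weights pass the lattice test, but Proposition 6.9 independently rules out $\omega_1$ and $\omega_3$ in degrees $\leq 2p-3$. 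Thus in either subcase only $\mu = \omega_2$ need be considered. Proposition 6.3 applied with $n=3$, $j=2$ then gives $i \geq 2p-6$, with equality forcing $\la = (p-4)\omega_2$.

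Next I would compute the critical cohomology group. Writing $(p-4)\omega_2 = p\omega_2 + w\cdot 0$ where $-w\cdot 0 = 4\omega_2$ is the sum of the four distinct positive roots containing $\alpha_2$, one has $\ell(w) = 4$, so $(i - \ell(w))/2 = p-5$. Proposition 3.2 yields
$$\dim \opH^{2p-6}\bigl(G, H^0((p-4)\omega_2) \otimes H^0(((p-4)\omega_2)^*)^{(1)}\bigr) = \sum_{u\in W}(-1)^{\ell(u)}P_{p-5}\bigl(u\cdot((p-4)\omega_2) - \omega_2\bigr),$$
which equals $1$ by Lemma 6.11. Since $(p-4)\omega_2$ lies in the lowest alcove and $\opH^j(G,k) = 0$ for $j > 0$, the hypothesis of Theorem 2.8(A) concerning smaller linked weights is vacuous. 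Theorem 2.8(A) then delivers both the vanishing in (i) and the isomorphism $\opH^{2p-6}(\gfp, k) \cong k$ in (ii).

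The main obstacle, concentrated in Lemma 6.11, is the explicit Kostant partition function computation: one must observe that $u\cdot((p-4)\omega_2) - \omega_2$ can be a sum of positive roots only for $u$ in the stabilizer $\{1, s_1, s_3, s_1s_3\}$ of $\omega_2$ (the competing orbit element $\alpha_1 + \alpha_3$ yields a negative $\alpha_2$-coefficient), and then evaluate the four partition numbers $P_{p-5}((p-5)\omega_2 + u\cdot 0)$ to see that the signed sum telescopes to $1$. Everything else is an orchestrated application of results already developed in Sections 2--6.
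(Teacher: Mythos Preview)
Your proposal is correct and follows essentially the same approach as the paper: invoking Theorem 6.7 for $p=5$, then for $p>5$ reducing to $\mu=\omega_2$ via Remark 6.3 (when $\gcd(p-1,4)=2$) or Proposition 6.9 (when $\gcd(p-1,4)=4$), applying Proposition 6.3 to force $\la=(p-4)\omega_2$, using Proposition 3.2 and Lemma 6.11 to compute the dimension, and concluding with Theorem 2.8(A). Your write-up is in fact slightly more explicit than the paper's in two places (spelling out via Corollary 3.5(b) why $\mu$ must be a fundamental weight, and noting that $(p-4)\omega_2$ lies in the lowest alcove so the $\opH^{m+1}$ hypothesis is vacuous), but the logic is identical.
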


%%%%%%%%%%%%%%%
%%Section 6.12
%%%%%%%%%%%%%%%

\subsection{\bf The Case: $n = 4$ and $p = 11$.}
Assume for this section that $\Phi$ is of type $A_4$ with $p = 11$.
Then $\gcd(p-1,n+1) = \gcd(10,5) = 5$ and $2p-3=19$.  It follows from Proposition 6.9 and 
Proposition 6.4 that the only non-zero cohomology in degrees lower than $19$ has to come 
from weights of the form  $\la= 11 \omega_2 +w\cdot 0= 6\omega_2$ or 
$\la = 11\omega_3 + w\cdot 0 = 6\omega_3$. In 
these cases we could potentially achieve non-vanishing in degree $18$. 
The weights $\omega_2$ and $\omega_3$ are dual. We give the argument for
$\omega_2$.  An analogous argument works for $\omega_3$.  
Here $\ell(w)= 6$.  According to Proposition 3.2, to have
$\opH^{18}(G, H^0((p-5)\omega_2) \otimes H^0((p-5)\omega_3)^{(1)})\neq 0$ one needs 
$\sum_{u\in W} (-1)^{\ell(u)} P_{6}(u \cdot(6\omega_2) - \omega_2)\neq 0.$
The lemma below (Lemma 6.12) rules this case out.

According to  Proposition 6.1, $\opH^{19}(\gfp, k) \neq 0.$  Again it  follows 
from Proposition 6.9 and Proposition 6.4 that any  cohomology in degree $19$ other then 
the one coming from Proposition 6.1 has to come from weights of the form  
$\la= 11 \omega_2 +w\cdot 0$ or $\la = 11\omega_3 + w\cdot 0$. 
Suppose $\la = 11\omega_2 + w\cdot 0$. Again, the $\omega_3$ case is analogous.
Proposition 3.2 implies that  cohomology in odd 
degrees has to come from  weights with corresponding $w \in W$ of odd length. Now 
equation (\ref{mcond}) shows that this can only happen if $\ell(w) =5$. Note that here 
$M_2(-w\cdot 0) = \ell(w)=5$. The only such weight is  $\la = 6 \omega_2 + 
\ta$.  By Proposition 3.2 it suffices to show that 
$\sum_{u\in W} (-1)^{\ell(u)} P_{7}(u \cdot(6 \omega_2 + \ta) - \omega_2)= 0.$
We conclude from  the lemma below that $\opH^i(G,H^0(\la)\otimes H^0(\la^*)^{(1)}) = 0$ for 
$0< i< 2p-3$ and further that $\opH^{2p - 3}(G,H^0(\la)\otimes H^0(\la^*)^{(1)}) = 0$
unless $\la = (p - n)\ta = 7\ta$.

\begin{lem} Suppose that $\Phi$ is of type $A_4$ with $p =11$. Then 
\begin{itemize}
\item[(a)]
$\sum_{u\in W} (-1)^{\ell(u)} P_{6}(u \cdot(6\omega_2) - \omega_2)=0,$
\item[(b)] 
$\sum_{u\in W} (-1)^{\ell(u)} P_{7}(u \cdot(6 \omega_2 + \ta) - \omega_2)= 0.$
\end{itemize}
\end{lem}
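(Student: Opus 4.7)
The plan is to interpret both alternating sums representation-theoretically and then to adapt the technique of Lemma 6.11. By the formula from Proposition 3.2 (obtained from Lemma 3.1, the identity (\ref{ind}), and Frobenius reciprocity), each sum equals $\dim\Hom_{B}(V(\la),S^{m}(\ul^{*})\otimes\omega_{2})$, where $(\la,m)=(6\omega_{2},6)$ for part (a) and $(6\omega_{2}+\ta,7)$ for part (b). Equivalently, this is the multiplicity of $V(\la)$ in the good filtration of $\ind_{B}^{G}(S^{m}(\ul^{*})\otimes\omega_{2})$, which is a $G$-module admitting a good filtration by \cite{KLT} since $p=11>h=5$. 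So the task reduces to showing that these two specific Hom-spaces vanish.

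For part (a), the first step is to cut down the set of $u\in W$ that contribute. Writing $\omega_{2}\sim\epsilon_{1}+\epsilon_{2}$ in the $\epsilon$-basis of $X(T)$ for type $A_{4}$, one has $u(\omega_{2})\sim\epsilon_{u(1)}+\epsilon_{u(2)}$, which depends only on the unordered pair $\{u(1),u(2)\}$. For $u\cdot(6\omega_{2})-\omega_{2}=6u(\omega_{2})-\omega_{2}+u\cdot 0$ to lie in $\mathbb{Z}_{\ge 0}\Phi^{+}$, its $\epsilon_{1}$-coefficient must be non-negative and its $\epsilon_{5}$-coefficient non-positive. Since the corresponding coefficients of $u\cdot 0$ are controlled by Observation 2.2(B) (each positive root in $A_{4}$ has $\epsilon_{1}$-coefficient in $\{0,1\}$ and $\epsilon_{5}$-coefficient in $\{0,-1\}$), these two inequalities force $\{u(1),u(2)\}=\{1,2\}$, i.e., $u\in W_{P}:=\operatorname{Stab}_{W}(\omega_{2})=\langle s_{1},s_{3},s_{4}\rangle\cong S_{2}\times S_{3}$, a subgroup of order $12$. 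The sum then becomes $\sum_{u\in W_{P}}(-1)^{\ell(u)}P_{6}(5\omega_{2}+u\cdot 0)$, and each of the twelve Kostant partition values can be computed by enumerating unordered decompositions of the given weight of height $15$ into six positive roots of $A_{4}$. Organizing the enumeration by multiset of root heights and summing with signs should produce $0$.

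Part (b) follows the same template with $\la=6\omega_{2}+\ta=\omega_{1}+6\omega_{2}+\omega_{4}$. Now $u\cdot\la-\omega_{2}=6u(\omega_{2})+u(\ta)+u\cdot 0-\omega_{2}$, and the admissibility test allows a slightly larger set of $u$ than in (a), since a negative $\epsilon_{5}$-contribution from $u(\omega_{2})$ can be partially compensated by a positive $\epsilon_{1}$-contribution from $u(\ta)$. Using $\operatorname{Stab}_{W}(\ta)=\langle s_{2},s_{3}\rangle$ together with $\operatorname{Stab}_{W}(\omega_{2})$, the admissible $u$'s break into a handful of cosets, on which one evaluates the $P_{7}$-values (now for weights of height $19$ decomposed into seven positive roots) and combines them with signs to arrive at $0$. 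The hard part throughout is the bookkeeping: whereas Lemma 6.11 involved only four terms with almost elementary partition counts, here there are at least a dozen nonzero contributions, each a genuine enumeration over the ten positive roots of $A_{4}$. The cleanest organization is probably to pair $u\leftrightarrow us_{\be}$ for a strategically chosen $\be\in\Delta$ so that most terms cancel in pairs, leaving a small explicit residual to verify. A more conceptual proof via generalized Kostka--Foulkes polynomials, or via the graded structure of $\opH^{\bullet}(G_{1},H^{0}(\mu))$ for the relevant $\mu$, would sidestep some of the combinatorics, but the direct approach above should suffice.
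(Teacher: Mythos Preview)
Your overall strategy matches the paper's: reduce the sum to a small set of $u$ and then compute directly (the paper also notes that MAGMA confirms both identities). However, your reduction step for part (a) has a genuine gap. Testing only the $\epsilon_1$- and $\epsilon_5$-coefficients amounts to checking just two of the four inequalities that cut out the positive root cone in $A_4$ (namely $c_1\ge 0$ and $c_5\le 0$, but not $c_1+c_2\ge 0$ or $c_1+c_2+c_3\ge 0$). These two conditions force $1\in\{u(1),u(2)\}$ and $5\notin\{u(1),u(2)\}$, but they do \emph{not} force $\{u(1),u(2)\}=\{1,2\}$: the choices $\{1,3\}$ and $\{1,4\}$ survive your test as well.

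The paper closes this gap differently. It observes that $u\cdot(6\omega_2)-\omega_2 = u(5\omega_2) + (u\cdot\omega_2-\omega_2)$ with $u\cdot\omega_2-\omega_2\le 0$, so if the left side lies in the positive root cone then so does $u(5\omega_2)$. A direct inspection of which permutations of $5\omega_2=(3,3,-2,-2,-2)$ lie in the cone leaves exactly $\{u(1),u(2)\}\in\{\{1,2\},\{1,3\}\}$. The coset $s_2\cdot\operatorname{Stab}_W(\omega_2)$ (corresponding to $\{1,3\}$) is then eliminated by a separate argument: for such $u$ one has $u\cdot(6\omega_2)-\omega_2 = 3\alpha_1+4\alpha_3+2\alpha_4+u\cdot 0$, and since $-u\cdot 0$ always carries a positive $\alpha_2$-contribution, the result has negative $\alpha_2$-coefficient and is not a sum of positive roots. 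Only after this is one genuinely reduced to $\operatorname{Stab}_W(\omega_2)$, at which point the paper (like you) appeals to a direct enumeration. The same caution applies to part (b): your admissibility test again misses intermediate partial-sum constraints, so the cosets that survive must be identified and handled more carefully than the $\epsilon_1/\epsilon_5$ check alone allows.
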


\begin{proof} (a) Observe that $5 \omega_2 = 3\alpha_1 + 6 \alpha_2 + 4\alpha_3+ 2\alpha_4= 3\epsilon_1 + 
3\epsilon_2 - 2\epsilon_3 -2\epsilon_4-2\epsilon_5.$ For $u \cdot(6\omega_2) - \omega_2$ to be a sum of 
positive roots one needs $u(5\omega_2)$ to be a sum of positive roots. This is the case if and only 
if either $u(5\omega_2) = 5 \omega_2$ or $u(5\omega_2) = 3\alpha_1 +\alpha_2+ 4\alpha_3+2\alpha_4.$ 
Let us consider the second case. Here $u$ is an element of the co-set $s_2\cdot\mbox{Stab}_W(\omega_2).$ 
Note that for any $u \in s_2\cdot\mbox{Stab}_W(\omega_2)$ the expression $-u \cdot 0$ contains some positive 
multiple of the root $\alpha_2$. Therefore  $u \cdot(6\omega_2) - \omega_2= 3\alpha_1+ 4\alpha_3+2\alpha_4+u\cdot 0$ is 
not a sum of positive roots. It suffices therefore to look at 
$\sum_{u\in \mbox{Stab}_W(\omega_2)} (-1)^{\ell(u)} P_{6}(5\omega_2+u \cdot 0).$ A straightforward but tedious 
calculation now shows that $\sum_{u\in \mbox{Stab}_W(\omega_2)} (-1)^{\ell(u)} P_{6}(5\omega_2+u \cdot 0)=0.$

(b) As in part (a) for $u\cdot(6 \omega_2 + \ta) -\omega_2$ to be the sum of positive roots one needs either $u \in 
\mbox{Stab}_W(\omega_2)$ or $u \in s_2\cdot\mbox{Stab}_W(\omega_2)$. However, the second case results in weights 
that cannot be written as sums of $7$ positive roots. As in part (a) it suffices therefore to look at 
$\sum_{u\in \mbox{Stab}_W(\omega_2)} (-1)^{\ell(u)} P_{7}(6\omega_2+u \cdot \ta)$ which can be shown to be zero.

Both parts of the lemma can also be readily checked by using computer software such as MAGMA \cite{BC,BCP}.
\end{proof}

%%%%%%%%%%%%
%%Section 6.13
%%%%%%%%%%%%

\subsection{\bf Summary for $r=1$.} The following theorem addresses (1.1.1) and (1.1.2) for type $A_n$ when $r = 1$.

\begin{thm}
Suppose $\Phi$ is of type $A_n$ with $n \geq 2$.  Suppose further
that $p > n + 1$.
\begin{itemize}
\item[(a)] (Generic case) If $p > n+2$ and $n > 3$, then
\begin{itemize}
\item[(i)] $\opH^i(\gfp, k) = 0$ for $0 < i < 2p-3$;
\item[(ii)] $\opH^{2p-3}(\gfp, k) \cong k$.

\end{itemize}
\item[(b)]  If $p = n+2$, then
\begin{itemize}
\item[(i)] $\opH^i(\gfp, k) = 0$ for $0 < i < p-2$;
\item[(ii)] $\opH^{p-2}(\gfp, k) \cong k\oplus k.$ 
\end{itemize}
\item[(c)]  If $n =2$ and $3$ divides $p-1$, then
\begin{itemize}
\item[(i)] $\opH^i(\gfp, k) = 0$ for $0 < i < 2p-6$;
\item[(ii)] $\opH^{2p-6}(\gfp, k) \cong k \oplus k.$ 
\end{itemize}
\item[(d)]  If $n =2$ and $3$ does not divide $p-1$, then
\begin{itemize}
\item[(i)] $\opH^i(\gfp, k) = 0$ for $0 < i < 2p-3$;
\item[(ii)] $\opH^{2p-3}(\gfp, k) \cong k.$ 
\end{itemize}
\item[(e)]  If $n =3$ and $p > 5$, then
\begin{itemize}
\item[(i)] $\opH^i(\gfp, k) = 0$ for $0 < i < 2p-6$;
\item[(ii)] $\opH^{2p-6}(\gfp, k) \cong k.$ 
\end{itemize}
\end{itemize}
\end{thm}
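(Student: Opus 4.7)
The plan is to assemble the theorem from the propositions proved in Sections 6.1 through 6.12, classifying dominant weights $\la$ according to their expression $\la = p\mu + w \cdot 0$ and invoking Theorem 2.8(A) to pass from $G$-cohomology to $\gfp$-cohomology. First I would record the global reduction: by Lemma 3.1 and the discussion in Section 3.2, any nonzero class in $\opH^i(G, H^0(\la)\otimes H^0(\la^*)^{(1)})$ forces $\la = p\mu + w\cdot 0$ for a nonzero dominant $\mu$ and some $w\in W$, with $\ell(w) \equiv i \pmod 2$. Proposition 6.4 then rules out all $\mu$ with $\langle \mu, \ta^{\vee}\rangle \geq 2$ except $\la = (p-n)\ta$, up to a degree comfortably larger than $2p-3$, so it suffices to check the fundamental-weight candidates $\mu = \omega_j$, subject to the integrality constraint of Remark 6.3.

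For case (a) with $n \geq 4$ and $p \geq n+3$, I would split on $g := \gcd(p-1, n+1)$. If $g = 1$, Remark 6.3 forbids every fundamental weight and Theorem 6.5 already concludes. If $1 < g < n+1$, the same remark excludes $\omega_1$ and $\omega_n$, while Proposition 6.6 kills the middle cases $\omega_j$ with $2 \leq j \leq n-1$. If $g = n+1$, Proposition 6.9 handles the end weights $\omega_1, \omega_n$ and Proposition 6.8 handles the middle ones, with the exceptional parameters $n=4$, $p=11$ deferred to Section 6.12, where Lemma 6.12 provides the necessary partition-function computation. Proposition 6.1(a) then supplies a single nonzero class in degree $2p-3$ from $\la = (p-n)\ta$; since this weight is minimal in its linkage class (see the proof of Proposition 6.1), Theorem 2.8(A) yields $\opH^{2p-3}(\gfp, k) \cong k$.

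Cases (b)--(e) are more direct. Part (b) is a restatement of Theorem 6.7. Parts (c) and (d) are Proposition 6.10, where for (c) two dominant weights $(p-3)\omega_1$ and $(p-3)\omega_2$ lying in different linkage classes each contribute a factor of $k$, producing the direct sum; for (d) Proposition 6.1 again supplies the non-vanishing class. Part (e) is Proposition 6.11(b). In each case one verifies that the distinguished weight realizing the non-vanishing class is minimal (in its linkage class) among dominant weights supporting cohomology, so that Theorem 2.8(A)(iii), combined with the block decomposition of $\cgr$ from Proposition 2.4, identifies $\opH^m(\gfp, k)$ as a single copy of $k$ or a sum of two copies when two inequivalent linkage classes contribute.

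The main obstacle is essentially already absorbed into the preceding sections: the boundary case $n=4$, $p=11$ in part (a), where the generic inequality estimates collapse and the explicit Kostant-partition-function cancellations in Lemma 6.12 are required. Outside this one configuration, the proof is a bookkeeping exercise in partitioning the space of dominant weights by $(\mu, w)$ and applying the appropriate vanishing proposition to each piece, so no new cohomological input is required at the level of this summary theorem.
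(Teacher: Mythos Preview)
Your proposal is correct and follows essentially the same approach as the paper: reduce to weights $\la = p\mu + w\cdot 0$, use Proposition 3.4(b)/6.4 to dispose of $\langle\mu,\ta^{\vee}\rangle \geq 2$, then invoke the case-by-case propositions of Sections 6.5--6.12 for the fundamental weights (split on $\gcd(p-1,n+1)$), and conclude via Theorem 2.8(A). Your write-up is in fact more explicit about the case split than the paper's own terse proof, which simply points back to ``previous sections.''
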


\begin{proof} Let $\la = p\mu + w\cdot 0 \in X(T)_{+}$ for
$\mu \in X(T)_{+}$ and $w \in W$.  Based on the discussion in 
Section 2, our goal has been to determine the least $i > 0$ 
such that 
$$\opH^i(G,H^0(\la)\otimes H^0(\la^*)^{(1)}) \neq 0.$$  
According to Proposition 6.1(a), we know that the weight 
$\la = p\ta - n\ta$ gives a non-zero cohomology class in degree $2p - 3$.

By Proposition 3.4(b), if $\langle\mu,\ta^{\vee}\rangle \geq 2$, then
$i \geq 2p - 3$.  Hence, the only way to obtain a smaller $i$ 
is for $\mu$ to be a fundamental weight.  That case has been dealt
with in previous sections, from which parts (a)(i), (b), (c), (d)(i),
and (e) follow.  It remains to show parts (a)(ii) and (d)(ii). From Proposition 6.4,
$\la = p\ta - n\ta = p(\omega_1 + \omega_n)$ is the only weight
with $\opH^{2p - 3}(G,H^0(\la)\otimes H^0(\la^*)^{(1)}) \neq 0$.
The result follows by Proposition 6.1(a) and Theorem 2.8(A).
\end{proof}

%%%%%%%%%%%%%%%
%%%%%%% NEW NEW NEW

%%%%%%%%%%%%%%
%%%% 6.14
%%%%%%%%%%%%%%

\subsection{\bf Results for $r>1$.}
The following theorem addresses (1.1.1) and (1.1.2) for type $A_n$ when $r > 1$ 
 and $p > 2n-2$. For $n >3$,  
a generic vanishing bound of degree $r(2p - 3)$ can be observed.

For $n + 2 \leq p \leq 2(n + 1)$, 
the methods employed in this paper should 
allow one to obtain precise vanishing bounds.  However,
given the number of special cases encountered in the $r = 1$ case, one would
expect even more non-generic behavior for $r > 1$. For example, it is easily  
seen that in the case $p=n+2$ non-vanishing already occurs in degree $r(p-2)$, 
i.e., $\opH^{r(p-2)}(\gfpr, k) \neq 0$. To give a complete answer many case-by-case
arguments will be necessary, most of them rather lengthy and intricate. 
For brevity we limit ourselves here to the  case where p is larger than twice the Coxeter number.

\begin{thm}
Suppose $\Phi$ is of type $A_n$ with $n \geq 2$ and  $p > 2(n + 1)$.   Then

\begin{itemize}
\item[(a)] (Generic case) If  $n > 3$, then
\begin{itemize}
\item[(i)] $\opH^i(\gfpr, k) = 0$ for $0 < i < r(2p-3)$;
\item[(ii)] $\opH^{r(2p-3)}(\gfpr, k) \cong k$.
\end{itemize}
\item[(b)]  If $n =2$ and $3$ divides $q-1$, then
\begin{itemize}
\item[(i)] $\opH^i(\gfpr, k) = 0$ for $0 < i < r(2p-6)$;
\item[(ii)] $\opH^{r(2p-6)}(\gfpr, k) \cong k \oplus k.$ 
\end{itemize}
\item[(c)]  If $n =2$ and $3$ does not divide $q-1$, then
\begin{itemize}
\item[(i)] $\opH^i(\gfpr, k) = 0$ for $0 < i < r(2p-3)$;
\item[(ii)] $\opH^{r(2p-3)}(\gfpr, k) \cong k.$ 
\end{itemize}
\item[(d)]  If $n =3$, then
\begin{itemize}
\item[(i)] $\opH^i(\gfpr, k) = 0$ for $0 < i < r(2p-6)$;
\item[(ii)] $\opH^{r(2p-6)}(\gfpr, k)  \cong k.$ 
\end{itemize}
\end{itemize}
\end{thm}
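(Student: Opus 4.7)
The plan is to apply Theorem 2.8(B) within each $W_p$-linkage class of dominant weights. In each of the four cases the analysis splits into two tasks: (i) identify a dominant weight $\la_0$ with $\opH^D(G,H^0(\la_0)\otimes H^0(\la_0^*)^{(r)}) \neq 0$, where $D$ is the claimed sharp degree, and verify the $\opH^{D+1}$-vanishing for weights $\nu < \la_0$ in the linkage class of $\la_0$; and (ii) establish $\opH^i(G,H^0(\la)\otimes H^0(\la^*)^{(r)}) = 0$ for every $\la \in X(T)_+$ and every $0 < i < D$. The candidate weights are lifted from the $r=1$ sharp-bound weights of Theorem 6.13: $\la_0 = (p-n)\ta$ for cases (a) and (c), $\la_0 = (p-3)\omega_j$ ($j = 1, 2$) for case (b), and $\la_0 = (p-4)\omega_2$ for case (d).

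For case (a), task (i) is already Lemma 6.1, and task (ii) follows by combining Remark 6.1.1 with Theorem 4.4. For cases (b), (c), (d), I would prove analogues of Lemma 6.1 for each $\la_0$ by induction on $r$ using the LHS spectral sequence
\[
E_2^{k,l} = \Ext^k_{G/G_1}(V(\la_0)^{(r)},\opH^l(G_1,H^0(\la_0))) \Rightarrow \Ext^{k+l}_G(V(\la_0)^{(r)},H^0(\la_0)).
\]
Propositions 4.2 and 4.3 bound the lowest non-vanishing total degree by the $r$-fold iterate of the $r=1$ sharp degree, and the Kostant-partition-function computations of the $r=1$ analysis (Lemma 6.11 in case (d), and the paragraph preceding Theorem 6.7 in case (b)) serve as the base step. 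The equality conditions in Proposition 4.3, together with the linkage requirement that each intermediate $\delta_j$ lie in the appropriate coset of the root lattice, force the entire chain of intermediate weights to coincide with $\la_0$, producing a single surviving transgression and the isomorphism $\Ext^D_G(V(\la_0)^{(r)},H^0(\la_0)) \cong k$ (or $k \oplus k$ in case (b)).

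Task (ii), the vanishing across all competing weights, is handled by running the same LHS induction with an arbitrary non-zero $\la = p\mu + w\cdot 0$ in place of $\la_0$: a non-trivial class in $\opH^i(G,H^0(\la)\otimes H^0(\la^*)^{(r)})$ would produce via Proposition 4.2 a chain of non-zero intermediate weights whose total degree is bounded below by Proposition 4.3. Combining that inequality with the refined $r=1$ case analysis in Sections 6.5--6.12 (in particular Propositions 6.4, 6.5, 6.8, 6.9 and Lemmas 6.10, 6.12), a direct check rules out any such chain producing degree strictly below $D$.

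The main obstacle is the linkage distinction between cases (b) and (c), where the divisibility condition involves $q - 1 = p^r - 1$ rather than $p - 1$. The point is that the linkage class of a weight that appears as an intermediate term $\ga_j$ in the $r$-step chain of Proposition 4.2 is governed by $p^r$ modulo $n+1$, not $p$ modulo $n+1$: for $n=2$ the fundamental weight $\omega_j$ is $W_p$-linked to $(p-3)\omega_j$ and can appear as some $\ga_j$ in a chain from $\la$ to $\la$ exactly when $p^r \equiv 1 \pmod{3}$. Verifying this linkage criterion carefully, and extending the exceptional-weight analysis of Section 6.11 consistently through all $r$ stages of the spectral sequence induction, is the most delicate bookkeeping in the argument.
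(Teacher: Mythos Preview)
Your high-level plan matches the paper's, but there is a genuine gap in task~(ii), and it is precisely where the bulk of the paper's proof lives. The difficulty is with weights $\la = p\mu + w\cdot 0$ where $\mu$ is \emph{not} in the root lattice. For such $\la$, Remark~6.2 does not apply, and Proposition~4.3 is far too weak: the intermediate $\delta_j$'s in the chain of Proposition~4.2 may be single fundamental weights, so $\langle \delta_j, \ta^\vee\rangle = 1$ and (\ref{rbound}) gives only $i \geq r(p-2)$, well short of $r(2p-3)$. You propose to recover the gap using ``the refined $r=1$ case analysis in Sections 6.5--6.12,'' but those results bound $\Ext^i_G(V(\la)^{(1)},H^0(\la))$ with the \emph{same} weight on both sides, while the chain consists of mixed groups $\Ext^{l_j}_G(V(\ga_j)^{(1)}, H^0(\ga_{j-1}))$ with $\ga_j \neq \ga_{j-1}$ in general. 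Naively applying the mixed-weight inequality (6.3.2) to each link gives a lower bound for $l_j$ involving both $\delta_j$ and $\delta_{j-1}$, and these do not sum to anything useful.

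The paper closes this gap with a device you do not mention: it defines telescoping quantities $d_j$ (equation (6.14.3)) so that $\sum_j d_j = \sum_j l_j = i$ while each $d_j$ admits a lower bound $d_j \geq 2(p-1)M_{m(\delta_j)}(\delta_j) - N_{m(\delta_j)}$ depending on $\delta_j$ alone. One then aims to show each $d_j > 2p-3$ individually. Even this is not automatic: when $\delta_j \in \{\omega_1,\omega_n\}$ and $\delta_{j-1}$ is not a fundamental weight, a single $d_j$ can fall below $2p-3$, and the paper salvages the sum via a pairing estimate $d_j + d_{j-1} > 2(2p-3)$ (its Case~2.2) together with a recursive regrouping of the indices. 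The exceptional cases $n=4$ with $p \in \{11,13\}$ require still further ad hoc work. Without this telescoping mechanism and the accompanying case split, ``a direct check rules out any such chain'' is not justified; it is exactly the step where your proposal lacks the key idea.
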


\begin{proof} From Remark 6.2, 
 for  $\la = p \delta + u \cdot 0$ with $\delta$ in the root lattice, the following holds
\begin{equation}
\Ext^{i}_G(V(\la)^{(r)}, H^0(\la))  \cong
 \begin{cases}
  0 &\text{ if } i < 2p-3,\\
 0 &\text{ if }i =  r(2p-3) \text{ and } \la \neq (p-n)(\omega_1+\omega_n),\\
 k &\text{ if }i =  r(2p-3) \text{ and } \la = (p-n)(\omega_1+\omega_n).
\end{cases}
\end{equation} 
From now on assume that $\la = p \delta + u \cdot 0$ and that $\delta$ not in the root lattice. Our goal is to obtain results like (6.14.1) for this situation.

  Assume further that 
$\Ext^i_G(V(\la)^{(r)}, H^0(\la)) \neq 0$. By Proposition 4.2, there exists a sequence of non-zero weights
 $\la = \ga_0, \ga_1, \dots, \ga_{r-1} , \ga_r= \la \in X(T)_+$ and nonnegative integers $l_1, l_2, \dots, l_r$ such that 
\begin{itemize}
\item[(i)] $i= \sum_{j=1}^rl_j$,
\item[(ii)] $\Ext^{l_j}_G(V(\ga_j)^{(1)}, H^0(\ga_{j-1})) \neq 0$, for $1\leq j \leq r$, and 
\item[(iii)] $\ga_j = p \delta_j + u_j\cdot 0$ for some $u_j \in W$ and  nonzero $\delta_j \in X(T)_+$.
\end{itemize}
Note that none of the $\delta_j$ are contained in the root lattice.

Next we apply our discussion in Section 6.3 to the pair of weights $\ga_j, \ga_{j-1}$ and obtain from (\ref{mcond2})
\begin{equation}
l_j\geq 2pM_{m(\delta_j)}(\delta_j) + 2M_{m(\delta_j)}( u_j\cdot 0)- 2M_{m(\delta_{j-1})}(\delta_{j-1}) 
-M_{m(\delta_{j-1})}( u_{j-1}\cdot 0).
\end{equation}

It follows from $\delta_r= \delta_0$ and $u_r = u_0$ that
\begin{eqnarray*}
i = \sum_{j=1}^r l_j &\geq& \sum_{j=1}^r 2pM_{m(\delta_j)}(\delta_j) + 2M_{m(\delta_j)}( u_j\cdot 0)- 2M_{m(\delta_{j-1})}(\delta_{j-1}) 
-M_{m(\delta_{j-1})}( u_{j-1}\cdot 0)\\
&=& \sum_{j=1}^r 2(p-1)M_{m(\delta_j)}(\delta_j) + M_{m(\delta_j)}( u_j\cdot 0)\\
&\geq&
\sum_{j=1}^r 2(p-1)M_{m(\delta_j)}(\delta_j) + M_{m(\delta_j)}(-2\rho),
\end{eqnarray*}
where the last inequality follows from Observation 6.3(f). We define 
\begin{equation}
d_j := l_j +2M_{m(\delta_{j-1})}(\delta_{j-1}) -
2M_{m(\delta_j)}(\delta_j) -M_{m(\delta_j)}( u_j\cdot 0) 
+M_{m(\delta_{j-1})}( u_{j-1}\cdot 0).
\end{equation} 
Then $i = \sum_{j=1}^r d_j$ and $d_j \geq   2(p-1)M_{m(\delta_j)}(\delta_j) + M_{m(\delta_j)}(-2\rho).$ 

In order to show vanishing up to the claimed degrees it is sufficient to show that $\sum_{j=1}^r d_j \geq r(2p-3)$, ($r(2p-6)$, respectively). We will actually show that strict inequalities hold in all but very few special cases. These special cases will yield statements (b)(ii) and (d)(ii) of the theorem.

According to Observation 6.3(e), we know that $M_{m(\delta_j)}(-2\rho) = -N_{m(\delta_j)}.$
Moreover, using the arguments in the proofs of Propositions 6.2, 6.3, and 6.4, one obtains the following bounds (recall that $\delta_j$ does not lie
in the root lattice):
\begin{equation}
d_j \geq \begin{cases}
\Big{[}2\left(\frac{p-1}{n+1}\right) - 1\Big{]}(n+1-m(\delta_j))m(\delta_j) &\mbox{ if }  \delta_j = \omega_{m(\delta_j)},\\
2(p-1) +2(n-1)  \left [\frac{p-1}{n + 1}-1\right] &\mbox{ else.}
\end{cases}
\end{equation}
Note that, as observed in Section 6.4, the second expression is strictly larger than $2p - 3$
under the assumptions on $p$ and $n$. Therefore, the only way to possibly obtain a $d_j$ with $d_j\leq 2p-3$ ($2p-6$, respectively) occurs when $\delta_j$ is a single fundamental weight. 
\\\\
{\bf Case 1: } $\delta_j \in\{\omega_2, ... ,\omega_{n-1}\}, n>2$.
\\\\
The expression
$\Big{[}2\left(\frac{p-1}{n+1}\right) - 1\Big{]}(n+1-m(\delta_j))m(\delta_j)$
attains a minimum when $m(\delta_j) = 2$ or $n - 1$ (see Section 6.8).  Hence,
\begin{eqnarray*}
d_j &\geq&  
2(p-1) + (2n-6)\left(\frac{p-1}{n+1}\right) - 2n+2\\
&\geq&2(p-1) + 2(2n-6) - 2n+2
\\
&=&2(p-1) + 2n-10.
\end{eqnarray*}
If $n \geq 5$ one obtains $d_j \geq 2p - 2 > 2p-3$. For $n=4$ one can show that  $d_j > 2p-3$ whenever $p > 13$.  We will discuss the case $n=4$ and $p \in \{11, 13\}$ separately later. For $n=3$ one obtains $d_j\geq 2p-6$. 
\\\\
{\bf Case 2: } $\delta_j = \omega_1$ or $\omega_n$. 
\\\\
Here  the above methods  produce the lower bound\begin{equation}
d_j \geq 2(p-1) - 2\left(\frac{p-1}{n+1}\right) - n,
\end{equation}
which is not sufficient. Other methods have to be applied. We will distinguish two cases.
\vskip .25cm 
\noindent
{\bf Case 2.1:} 
$\delta_{j-1} $ is a fundamental weight. 
\vskip .15cm 
We apply Lemma 6.9 to obtain from equation (\ref{incond}) 
$$l_j \geq (p-k)n-(n+1-k)k-(n+1-s)s+\ell( u_{j-1}),$$
where $k = \ell( u_j) =  -M_1( u_j\cdot 0)$.  By (6.14.3)
$$d_j \geq l_j + 2M_s(\omega_s) - 2M_1(\omega_1)  -M_1( u_j\cdot 0) 
+M_s( u_{j-1}\cdot 0).
$$ From Observation 2.2(B) (see Section 6.3), we know that $\ell( u_{j-1}) \geq -M_s( u_{j-1}\cdot 0)$ and clearly $M_s(\omega_s)\geq M_1(\omega_1)$. This yields 
$$d_j \geq (p-k)n-(n+1-k)k +k-(n+1-s)s=(p-k)n-(n-k)k -(n+1-s)s.
$$
As a function of $k$, the above attains its minimum at $k=n$. 
Hence 
$$d_j \geq (p-n)n-(n+1-s)s=2p-3 +(n-2)p-n^2-(n+1-s)s+3.$$
Using the assumption that $p \geq 2n+3$, one obtains
\begin{eqnarray*}d_j &\geq& 2p-3 +(n-2)(2n+3)-n^2-(n+1-s)s+3\\
&=&2p-3 +n^2-n-(n+1-s)s-3 .
\end{eqnarray*}
As an integer function of  $s$, the above attains its minimum at  
$$s= \begin{cases}
(n+1)/2 &\text{ for $n$ odd,}\\
n/2 &\text{ for $n$ even.}
\end{cases}$$ 
One concludes that 
$$\displaystyle{d_j \geq 
 \begin{cases}
 2p-3 +\frac{3(n-1)^2}{4}-4 &\text{ for $n$ odd,}\\
2p-3 +\frac{3n(n-2)}{4}-3.&\text{ for $n$ even.}
\end{cases}}$$
\\
\\
For $n > 3$ this yields $d_j > 2p-3$, for $n=3$ this yields $d_j \geq 2p-4$, and for $n=2$ this yields $d_j \geq 2p-6$.

\vskip .25cm 
\noindent
{\bf Case 2.2:}  $\delta_{j-1}$ is not a fundamental weight. 
\vskip .15cm 
Here we will show that $d_j+ d_{j-1} \geq 2(2p-3)$ (in the generic case). 
If  $j=1$ we set $d_0=d_r$.  Note that the following argument makes sense because $\gamma_0 = \gamma_r$.  Recall that none of the $\delta_j$ are contained in the 
root lattice.
Therefore, (6.14.4) yields 
\begin{equation}d_{j-1} \geq 2(p-1) +2(n-1)  \left [\frac{p-1}{n + 1}-1\right].
\end{equation}
 Adding (6.14.5) and (6.14.6) produces
\begin{eqnarray*}
d_j + d_{j-1} &\geq& 2(2p-3)+2 +2(n-2)  \left(\frac{p-1}{n + 1}\right)-3n+2\\
	&\geq&  2(2p-3) +4(n-2) -3n+4 \\
&=&  2(2p-3) + n-4.
\end{eqnarray*}
If $n \geq 5$ one obtains $d_j +d_{j-1}> 2(2p-3)$. The same holds for $n=4$ and $p > 11$ (since the second inequality above is in fact strict).  For $n=4$ and $p = 11$,
one has $d_j + d_{j-1} = 2(2p - 3)$  only if 
$\delta_{j-1} \in \{ \omega_1 + \omega_2,  \omega_1 + \omega_3,  \omega_4 + \omega_2,  \omega_4 + \omega_3\}$. 
 We will discuss this case later. For all others weights we get a strict inequality. For $n=2$ and $n=3$ it follows that  $d_j+d_{j-1}\geq 2(2p-4)$.

Assume now that $n >3$. If $n=4$ assume in addition that $p>13$. From above, we see that $d_j > 2p - 3$ unless $\delta_j  = \omega_1$
or $\omega_n$ and $\delta_{j-1}$ is not a fundamental weight.
If $d_j > 2p - 3$ for each $1 \leq j \leq r$, we have
$$
i \geq \sum_{j = 1}^{r}d_j > r(2p - 3)
$$
and vanishing for positive degrees up to $r(2p-3)$. 

Suppose now that $d_j \leq 2p - 3$ for some $j$. Recall that none of $\delta_j$ are assumed to be in the root lattice.
Let $t$ be the largest such $j$ and suppose that $t > 1$. 
Then 
$$
i \geq \sum_{j = 1}^r d_j = \sum_{t + 1}^r d_j + d_t + d_{t - 1} + \sum_{j = 1}^{t - 2} d_j
> (r - t)(2p - 3) + 2(2p - 3) + \sum_{j = 1}^{t - 2} d_j.
$$
Consider the remaining sum $\sum_{j = 1}^{t - 2} d_j$, again identify the 
largest $j$ with $d_j \leq 2p - 3$, and repeat this decomposition.  Continuing 
in this manner the claim follows except possibly if we reach a point when the 
largest $j$ with $d_j \leq 2p - 3$ is $j = 1$. So we are done if $d_1 > 2p - 3$.

Suppose now that $\delta_1 = \omega_1$ or $\omega_n$ and $\delta_0$ is not 
a fundamental weight so that we could have $d_1 \leq 2p - 3$.  Note that since
$\delta_1$ is a fundamental weight, $d_2 > 2p - 3$.  Recall that $\delta_0 = \delta_r$.  
Therefore, we can use the 
Case 2.2 argument to show that $d_1 + d_r > 2(2p - 3)$.  Further, since $d_2 > 2p - 3$,
the above argument can successfully be used to show that
$\sum_{j = 2}^{r - 1}d_j > (r - 2)(2p - 3)$. Hence, 
$$
i \geq d_1 + d_r + \sum_{j = 2}^{r - 1}d_j > 2(2p - 3) + (r - 2)(2p - 3) = r(2p - 3).
$$

One concludes that (6.14.1)  holds for any dominant weight $\la$ as long as $n\geq 5$ or $n=4$ and $p>13$.

%The weight $\la = (p-n) \widetilde{\alpha}$ is the lowest dominant weight in its linkage class. Theorem 2.8(A) implies part (a) (except for $n=4$ and $p \in \{11, 13\}$).

For $n=3$ the above cases show that $i = \sum_{i=1}^r d_j \geq r(2p-6)$ with equality only in the case that all $\delta_j = \omega_2$. Using the arguments in Section 6.11  
one concludes that 
for weights $\mu_1 = p\omega_2+w_1\cdot0$ and $\mu_2 = p\omega_2+w_2\cdot0$
$$\Ext^i_G(V(\mu_2)^{(1)}, H^0(\mu_1)) = 
\begin{cases}
0 &\text{ if } 0< i < 2p-6,\\
0 &\text{ if } i = 2p-6 \text{ and  not both } \mu_1, \mu_2 \text{ are equal to  }  (p-4)\omega_2,\\
k &\text{ if } i = 2p-6 \text{ and } \mu_1=\mu_2 = (p-4)\omega_2.
\end{cases}
$$
An argument similar to those in Sections 5.4 and 6.2 now yields
\begin{equation}
\Ext^i_G(V(\la)^{(r)}, H^0(\la)) = 
\begin{cases}
0 &\text{ if } 0< i < r(2p-6),\\
0 &\text{ if } i = r(2p-6)\text{ and } \la \neq (p-4) \omega_2,\\
k &\text{ if } i = r(2p-6) \text{ and } \la = (p-4) \omega_2.
\end{cases}
\end{equation}
%Again the weight $(p-4)\omega_2$ is minimal in its linkage class and Theorem 2.8(A) implies part (c).  

If $n=2$ and $3$ does not divide $p^r-1$ then $\Ext^i_G(V(\la)^{(r)}, H^0(\la)) \neq 0$ forces $\la$ and all the $\gamma_j$ to be in the root lattice. By Remark 6.2 one obtains (6.14.1).

If $n=2$ and $3$  divides $p^r-1$, one concludes from the above discussion that  $i = \sum_{i=1}^r d_j \geq r(2p-6)$ with equality only in the case that all $\delta_j \in \{\omega_1, \omega_2\}$. 
A direct computation using  Lemma 6.9 now shows that for weights  $\mu_1 = p\omega_r+w_1\cdot0$ and $\mu_2 = p\omega_s+w_2\cdot0$ with $r,s \in \{1,2\}$ 
$$\dim \Ext^i_G(V(\mu_2)^{(1)}, H^0(\mu_1)) = 
\begin{cases}
0 &\text{ if } 0< i < 2p-6,\\
0 &\text{ if } i = 2p-6\\
& \text{ and not both } \mu_1, \mu_2 \text{ are in } \{(p-3)\omega_i \;|\; i=1,2\}, \\
\delta_{rs} &\text{ if } i = 2p-6, \text{ both } \mu_1, \mu_2 \in \{(p-3)\omega_i \;|\; i=1,2\} \\
&\text{ and  }p \equiv 1 \mod 3, \\
1-\delta_{rs} &\text{ if } i = 2p-6, \text{ both } \mu_1, \mu_2 \in \{(p-3)\omega_i \;|\; i=1,2\}
\\
&\text{ and  }p \equiv -1 \mod 3.
\end{cases}
$$
Arguments like the ones in Sections 5.4 and 6.2 now yield
\begin{equation}
\Ext^i_G(V(\la)^{(r)}, H^0(\la)) = 
\begin{cases}
0 &\text{ if } 0< i < r(2p-6),\\
0 &\text{ if } i = r(2p-6)\text{ and } \la \neq (p-3) \omega_i, \; i \in \{1, 2\},\\
k &\text{ if } i = r(2p-6)\text{ and } \la = (p-3) \omega_i, \; i \in \{1, 2\}.
\end{cases}
\end{equation}
Note that in the case when $3$ divides $p-1$ all $\delta_j $ are the same, while in the case that $3$ divides $p+1$ the $\delta_j$ alternate between $\omega_1$ and $\omega_2$.

That leaves only the cases $n=4$ and $p=11$ or $13$. We will show that (6.14.1) holds in these cases. We discuss $p=13$ first. Looking at Cases 1 through 2.2 one can see that $d_j\leq 2p-3$ occurs when $\delta_j= \omega_2$ or its dual weight $\omega_3$. We will discuss the case $\delta_j=\omega_2$. We distinguish two cases, namely, $\delta_{j-1}$ being the sum of at least two fundamental weights and $\delta_{j-1}$ being a fundamental weight. In the first situation one can use an argument similar to Case 2.2 to show that $d_j + d_{j-1} > 2(2p-3)$. We leave the details to the interested reader. In the second situation the weight $\delta_{j-1}=\omega_1$, because $p\delta_j -\delta_{j-1}$ has to be an element of the root lattice.  In our series of estimates for $i$ and $l_j$ in  (6.3.1), (6.3.2), and (6.14.2), we replaced $-2M_{m(\delta_j)}(\delta_{j-1})$ by $-2M_{m(\delta_{j-1})}(\delta_{j-1})$. The former clearly being greater than or equal to the latter. Without this substitution one can obtain a better estimate for $d_j$, namely 
$  2(p-1)M_{m(\delta_j)}(\delta_j) + M_{m(\delta_j)}(-2\rho)+2M_{m(\delta_{j-1})}(\delta_{j-1})-2M_{m(\delta_j)}(\delta_{j-1}).$ In our special case this yields 
$d_j \geq 2\cdot12\cdot6/5-6+2\cdot4/5-2\cdot3/5>23,$ as required.

When $n=4$ and $p=11$ we need to revisit Case 1 with $\delta_j = \omega_2$ or $\omega_3$. We proceed as in the $p=13$ case. If $\delta_{j-1}$ is the sum of more than one fundamental weight an argument similar to Case 2.2 will show that $d_j + d_{j-1} > 2(2p-3)$. If $\delta_{j-1}$ is a single fundamental weight then the prime forces $\delta_{j-1}=\delta_{j}$. Here Lemma 6.12 implies $d_j > 2p-3$. Details are left to the reader. 
The last remaining situation is when $\delta_j= \omega_1$ or $\omega_4$. Note that equality holds in (6.14.6) if and only if $\delta_{j-1} \in \{ \omega_1 + \omega_2,  \omega_1 + \omega_3,  \omega_4 + \omega_2,  \omega_4 + \omega_3\}$.  In any of these four cases the height argument used in Case 2.1 yields $d_j= 2p-4$. From (6.14.6) one obtains $d_{j-1}= 2p + 4$. Hence $d_j + d_{j-1} > 2(2p-3)$ and (6.14.1) holds for $n\geq4$.

The weights $\la = (p-n) \widetilde{\alpha}$ as well as $(p-4)\omega_2$ for $n=3$ are the lowest non-zero dominant weights in their linkage classes. Theorem 2.8(A), (6.14.1), and (6.14.7) imply parts (a), (c) and (d). Similarly,  a slight variation of Theorem 2.8(A) and (6.14.8) yield part (b).

\end{proof}

%%%%%%%%%%%%%%
%%%% 6.15
%%%%%%%%%%%%%%

\subsection{\bf The General Linear Group $GL_n({\mathbb F}_{q})$.}
For convenience we assumed throughout the paper that $G$ is a simple algebraic group. However, most results can be generalized to split reductive groups such as  $GL_n(k)$. In particular Sections 2.3 through 2.8 and  Formula (3.2.1) are valid for this larger set of groups.  One can therefore argue as in Sections 6.1 and 6.2 and obtain the following:

\begin{prop} Suppose  $n \geq 1$, $r \geq 1$  and $p \geq n + 2$.
Let $\la = (p - n)\ta$, $\ta$ being the maximal positive root.
Then 
\begin{itemize} 
\item[(a)]$\opH^{r(2p - 3)}(GL_n(k),H^0(\la)\otimes H^0(\la^*)^{(1)}) \cong k$; 
\item[(b)] $\opH^{r(2p - 3)}(GL_n({\mathbb F}_{q}),k) \neq 0$.
\end{itemize}
\end{prop}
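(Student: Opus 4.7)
The strategy is to transplant the proofs of Proposition 6.1 and Lemma 6.2 (the $SL_{n+1}$/simply connected simple type $A$ case) to the reductive group $GL_n$. As noted at the opening of Section 6.15, the entire induction-functor framework of Sections 2.3--2.8, together with the Kostant Partition Function formula (3.2.1), carries over to any split reductive group, so these remain the core tools. The rank-one center of $GL_n$ has trivial higher cohomology with coefficients in modules coming from the derived subgroup, so the required Donkin-type good filtration statements on $\opH^j(G_r, H^0(\nu))^{(-r)}$ for $p > h$ pull back from the corresponding $SL_n$ statements via the isogeny $GL_1 \times SL_n \to GL_n$.

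For part (a), I would take exactly the same witness as in Section 6.1: set $w = s_\ta = s_1 s_2 \cdots s_{n-1} \cdots s_2 s_1$, so that $-w\cdot 0 = n\ta$ is the sum of all positive roots containing either $\alpha_1$ or $\alpha_{n-1}$, and put $\la = p\ta + w\cdot 0 = (p-n)\ta$ and $\mu = \ta$. The $\epsilon$-basis computation in Section 6.1 extends without change to the $GL_n$ weight lattice (the extra central weight contributes nothing to the required $u$-invariance), giving
\[
\sum_{u \in W} (-1)^{\ell(u)} P_{p-n-1}(u\cdot\la - \mu) = 1,
\]
and Proposition 3.2 then yields part (a) in the case $r = 1$. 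For general $r$ I would run the LHS spectral sequence induction of Lemma 6.2: the sequence of weights $\la = \ga_0, \ga_1, \dots, \ga_r = \la$ produced by Proposition 4.2 must satisfy $\delta_j \in$ root lattice (by the linkage principle, since $\la$ itself does), hence $\langle \delta_j, \ta^\vee \rangle \geq 2$, hence each $d_j$ term is $\geq 2p-3$. Tracking the equality case via Proposition 4.3 forces every $\ga_j$ to equal $\la$, collapsing the $E_2$-page to a single transgressing entry and giving $\Ext^{r(2p-3)}_{GL_n}(V(\la)^{(r)}, H^0(\la)) \cong k$.

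For part (b), I would apply the linkage-class version of Theorem 2.8(B) to the class $\mathcal{L}$ containing $\la$. The three hypotheses to check are: minimality in degree (from Theorem 4.4 together with part (a)); the existence of a non-zero class in degree $r(2p-3)$ attached to $\la$ (part (a)); and vanishing of $\opH^{r(2p-3)+1}(G, H^0(\nu)\otimes H^0(\nu^*)^{(r)})$ for $\nu < \la$ in $\mathcal{L}$. For this last point, the argument of Proposition 6.1(b) transfers directly: $(p-n)\ta$ sits in the second fundamental $p$-alcove as the reflection of $0$ across the upper wall, and the only dominant weight in $\mathcal{L}$ strictly below it is $0$, for which $\opH^i(G, k) = 0$ when $i > 0$. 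The main technical obstacle I anticipate is bookkeeping the replacement of the simply connected weight lattice by the $GL_n$ weight lattice throughout the iteration in Lemma 6.2 --- in particular, ensuring that the parities and lengths in $W = S_n$ match up at each step of the induction --- but since $W$ and the root system are unchanged by passing from $SL_n$ to $GL_n$, this bookkeeping is mechanical and no substantively new idea is required.
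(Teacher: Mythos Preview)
Your proposal is correct and follows the paper's own approach: the paper simply remarks that ``Sections 2.3 through 2.8 and Formula (3.2.1) are valid for this larger set of groups'' and that ``one can therefore argue as in Sections 6.1 and 6.2,'' which is exactly the transplantation you carry out (including the root-lattice linkage argument forcing $\langle\delta_j,\ta^{\vee}\rangle\geq 2$ and the spectral-sequence induction of Lemma~6.2, together with the alcove observation from Proposition~6.1(b) that the only dominant weight below $\la$ in its linkage class is $0$). One small caution on bookkeeping: since $GL_n$ has root system $A_{n-1}$ (Coxeter number $h=n$), you should have $-s_{\ta}\cdot 0 = (n-1)\ta$ and $\ell(s_{\ta})=2n-3$, not $n\ta$ and $2n-1$ as in Section~6 (which treats $A_n$); this is purely an index shift and does not affect the argument.
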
 
In \cite[Appendix]{FP} Friedlander and Parshall found the sharp bound $r(2p-3)$ for the 
Borel subgroup $B(\mathbb{F}_{q})$ of $GL_n (\mathbb{F}_{q})$ for all odd $q$. 
Using the fact that the restriction map $\opH^{i}(GL_n({\mathbb F}_{q}),k) \to 
\opH^{i}(B({\mathbb F}_{q}),k)$ is injective one obtains the vanishing range 
of $0<i< r(2p-3)$ for the cohomology of $GL_{n}({\mathbb F}_{q})$. One can now 
combine this with the aforementioned proposition to obtain the following theorem 
which verifies the conjecture in Barbu \cite[Conjecture 4.11]{B} for $p\geq n+2$. 

\begin{thm}
Suppose $n \geq 1$.  
\begin{itemize}
\item[(a)] If $q$ is odd, then $\opH^i(GL_n({\mathbb F}_{q}), k) = 0$ for $0 < i < r(2p-3)$;
\item[(b)] If $p \geq n+2$, then $\opH^{r(2p-3)}(GL_n({\mathbb F}_{q}), k) \neq 0$.
\end{itemize}
\end{thm}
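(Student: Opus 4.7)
The plan for part (a) is to reduce the question for $GL_n(\mathbb{F}_q)$ to the corresponding question for its standard Borel subgroup $B(\mathbb{F}_q)$ (upper triangular matrices), where a sharp bound of $r(2p-3)$ has already been established by Friedlander--Parshall \cite[(A.1) Lemma]{FP} under the hypothesis that $q$ is odd. What remains is to show that the restriction map $\opH^i(GL_n(\mathbb{F}_q),k) \to \opH^i(B(\mathbb{F}_q),k)$ is injective for all $i$, since then the vanishing for $B(\mathbb{F}_q)$ in degrees $0 < i < r(2p-3)$ forces the same vanishing for $GL_n(\mathbb{F}_q)$.

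To establish this injectivity I would invoke a standard Sylow--transfer argument. The subgroup $U(\mathbb{F}_q)$ of upper triangular unipotent matrices is a Sylow $p$-subgroup of $GL_n(\mathbb{F}_q)$ and is contained in $B(\mathbb{F}_q)$, so the index $[GL_n(\mathbb{F}_q):B(\mathbb{F}_q)]$ is coprime to $p$ and hence a unit in $k$. The composition of restriction with the transfer map equals multiplication by this index, which is invertible on cohomology with coefficients in $k$. Consequently restriction is split injective, which gives the desired reduction and completes part (a).

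For part (b), the plan is simply to cite the proposition immediately preceding the theorem (Proposition 6.16), which, under the hypothesis $p \geq n+2$, produces a non-zero class in $\opH^{r(2p-3)}(GL_n(\mathbb{F}_q),k)$. All of the serious work for part (b) is absorbed into that proposition, whose proof in turn mirrors the strategy used in Sections 6.1--6.2 for the simply connected group: one applies the induction-functor framework of Section 2.3 together with Theorem 2.8(A) to the explicit weight $\la = (p-n)\ta$, after verifying that the cohomological tools of Sections 2 and 3 extend from simple groups to the reductive setting of $GL_n$.

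I do not expect a serious obstacle in executing this plan; both inputs (Friedlander--Parshall's sharp bound for $B(\mathbb{F}_q)$ and our Proposition 6.16 for $GL_n(\mathbb{F}_q)$) are available, and the transfer step is routine. The closest thing to a subtle point is ensuring that all the machinery of Section 2 (the induction functor $\mathcal{G}_r$, good filtration of $\mathcal{G}_r(k)$, and the linkage decomposition) transfers verbatim from simple simply connected groups to the reductive group $GL_n(k)$, which is noted in the preceding subsection. Combining parts (a) and (b) then verifies Barbu's conjecture \cite[Conjecture 4.11]{B} in the stated range $p \geq n+2$.
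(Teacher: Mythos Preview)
Your proposal is correct and follows essentially the same route as the paper: part (a) is obtained by combining Friedlander--Parshall's sharp bound for $B(\mathbb{F}_q)$ with injectivity of restriction (you supply the standard Sylow--transfer justification, which the paper leaves implicit), and part (b) is exactly the preceding proposition. The only quibble is the label ``Proposition 6.16''---the propositions in this paper are unnumbered, so you should refer to it as the proposition in Section~6.15 or simply ``the preceding proposition.''
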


%%%%%%%%%
%%References
%%%%%%%%%

\let\section=\oldsection

\end{document}